\newtheorem{thm}{Theorem}[section]
\newtheorem{lemma}[thm]{Lemma}
\newtheorem{prop}[thm]{Proposition}
\theoremstyle{definition}
\newtheorem{defn}[thm]{Definition}
\theoremstyle{remark}
\newtheorem{remark}[thm]{Remark}
\numberwithin{equation}{section}
\DeclareMathOperator{\Mod}{mod}
\newcommand{\mmod}[1]{\;(\Mod{ #1})}
\def\alp{{\alpha}} 
\def\bet{{\beta}}  
\def\gam{{\gamma}} 
\def\del{{\delta}} \def\Del{{\Delta}}
\def\kap{{\kappa}}
\def\lam{{\lambda}} \def\Lam{{\Lambda}}
\def\sig{{\sigma}}
\def\ome{{\omega}}  
\def\eps{\varepsilon}
\def\le{\leqslant} \def\ge{\geqslant}
\def\d{{\,{\rm d}}}
\def \sig{{\sigma}}
\def \bC {\mathbb C}
\def \bF {\mathbb F}
\def \bN {\mathbb N}
\def \bQ {\mathbb Q}
\def \bR {\mathbb R}
\def \bZ {\mathbb Z}
\def \bT {\mathbb T}
\def \bn {{\mathbf{n}}}
\def \bu {\mathbf u}
\def \bv {\mathbf v}
\def \bx {\mathbf x}
\def \by {\mathbf y}
\def \bz {\mathbf z}
\def \balp {\boldsymbol{\alp}}
\def \bbeta {\boldsymbol{\beta}}
\def \fm {\mathfrak m}
\def \fM {\mathfrak M}
\def \cA {\mathcal A}
\def \cB {\mathcal B}
\def \cC {\mathcal C}
\def \cF {\mathcal F}
\def \cP {\mathcal P}
\def \cS {\mathcal S}
\def \ord {\mathrm{ord}}
\def \deg {\mathrm{deg}}
\def \N {\mathbb{N}}
\def \R {\mathbb{R}}
\def \T {\mathbb{T}}
\def \Z {\mathbb{Z}}
\def \balpha {\bm{\alpha}}
\def \btheta {\bm{\theta}}
\newcommand{\str}{\mathrm{str}}
\newcommand{\sml}{\mathrm{sml}}
\newcommand{\unf}{\mathrm{unf}}
\newcommand{\bohr}{\mathrm{Bohr}}
\begin{document}
\title[Generalised Rado and Roth criteria]{Generalised Rado and Roth criteria}
\subjclass[2020]{11B30 (primary); 05D10, 11D72, 11L15 (secondary)}
\keywords{Arithmetic combinatorics, arithmetic Ramsey theory, Diophantine equations, Hardy--Littlewood method, partition regularity, restriction theory}
\author{Jonathan Chapman \and Sam Chow}

\address{School of Mathematics, Fry Building, University of Bristol, Woodland Road, Bristol, BS8 1UG, United Kingdom}
\email{jonathan.chapman@bristol.ac.uk}
\address{Mathematics Institute, Zeeman Building, University of Warwick, Coventry CV4 7AL, United Kingdom}
\email{Sam.Chow@warwick.ac.uk}

\maketitle

{\centering\footnotesize \em{Dedicated to Sean Prendiville} \par}

\begin{abstract} 
We study the Ramsey properties of equations
$a_1P(x_1) + \cdots + a_sP(x_s) = b$, where $a_1,\ldots,a_s,b$ are integers, and $P$ is an integer polynomial of degree $d$. Provided there are at least $(1+o(1))d^2$ variables, we show that Rado's criterion and an intersectivity condition completely characterise which equations of this form admit monochromatic solutions with respect to an arbitrary finite colouring of the positive integers. Furthermore, we obtain a Roth-type theorem for these equations, showing that they admit non-constant solutions over any set of integers with positive upper density if and only if $b=a_1+\cdots+a_s = 0$. In addition, we establish sharp asymptotic lower bounds for the number of monochromatic/dense solutions (supersaturation).
\end{abstract}

\setcounter{tocdepth}{1}
\tableofcontents

\section{Introduction}

A system of polynomial equations is called \emph{partition regular} if every finite colouring of the positive integers admits monochromatic non-constant\footnote{A solution $(x_{1},\ldots,x_{s})$ is \emph{non-constant} if $x_i\neq x_j$ holds for some $i\neq j$.} solutions to the system.\footnote{Some authors allow constant monochromatic solutions in the definition of partition regularity.} 
A foundational result in the field of arithmetic Ramsey theory is \emph{Rado's criterion} \cite[Satz IV]{Rad1933}, which provides necessary and sufficient conditions for a finite system of linear equations to be partition regular. For example, given $s\geqslant 3$ and non-zero integers $a_{1},\ldots,a_s$, Rado's criterion asserts that the linear homogeneous equation
\begin{equation}\label{eqn1.1}
    a_{1}x_{1} + \cdots + a_s x_s = 0
\end{equation}
is partition regular if and only if there exists a non-empty set $I\subseteq\{1,\ldots,s\}$ such that $\sum_{i\in I}a_i = 0$.

A similar, stronger notion is that of \emph{density regularity}, which refers to systems of equations which have non-constant solutions over all sets of positive integers $A$ satisfying
\begin{equation*}
    \limsup_{N\to\infty}\frac{|A\cap\{1,2,\ldots,N\}|}{N}>0.
\end{equation*}
Such sets $A$ are said to have \emph{positive upper density}. An influential Fourier analytic argument of Roth \cite{Rot1954} shows that if $s\geqslant 3$, then the linear homogeneous equation (\ref{eqn1.1}) is density regular if and only if $a_1 + \cdots + a_s = 0$.

Recent work on partition regularity has focused on generalising the theorems of Rado and Roth by finding necessary \cite{BaLuMo21,DL18} and sufficient  \cite{Cha2022,Cho2018,CLP2021,Pre2021,Sch2021} conditions for partition and density regularity for general systems of polynomial equations. In this paper we consider equations of the form
\begin{equation}\label{eqn1.2}
    a_{1}P(x_{1}) + \cdots + a_{s}P(x_{s}) = 0,
\end{equation}
where $P$ is a polynomial with integer coefficients, and $a_1,\ldots,a_s$ are non-zero integers. Previous work of the second author with Lindqvist and Prendiville \cite{CLP2021} extended Rado's criterion to equations (\ref{eqn1.2}) for $P(x)=x^{d}$ under the assumption that the number of variables $s$ is sufficiently large in terms of $d$.

In this paper, we extend these results further by completely characterising partition and density regularity for equations (\ref{eqn1.2}) in sufficiently many variables. To state our main results, we require the following definition.
An integer polynomial $P\in\Z[X]$ is called \emph{intersective} if for every positive integer $n$, there exists an integer $x$ such that $P(x)$ is divisible by $n$. Integer polynomials which admit integer zeros are intersective, however, there exist numerous intersective polynomials which have no rational zeros, such as $P(X)=(X^{3} -19)(X^{2} +X +1)$.

Our first theorem shows that Rado's criterion and Roth's theorem hold for equations in intersective polynomials with sufficiently many variables.

\begin{thm}\label{thm1.1}
Let $d\geqslant 2$ be an integer, and define
\begin{equation}\label{eqn1.3}
s_1(d) := \begin{cases}
5, &\text{if } d=2; \\
9, &\text{if } d=3; \\
d^2-d+2\lfloor \sqrt{2d+2} \rfloor + 1, &\text{if } d \ge 4.
\end{cases}
\end{equation}
Let $P$ be an intersective integer polynomial of degree $d$. Let $s\geqslant s_1(d)$ be an integer, and let $a_1,\ldots,a_s$ be non-zero integers. 
\begin{enumerate}
    \item[(PR)] The equation (\ref{eqn1.2}) is partition regular if and only if there exists a non-empty set $I\subseteq\{1,\ldots,s\}$ such that $\sum_{i\in I}a_i =0$.
    \item[(DR)] The equation (\ref{eqn1.2}) is density regular if and only if $a_1 +\cdots +a_s = 0$.
\end{enumerate}
\end{thm}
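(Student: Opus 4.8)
The plan is to establish both directions of (PR) and (DR) by reducing everything to a single supersaturation statement for the equation (\ref{eqn1.2}), proven via the Hardy--Littlewood circle method enhanced by restriction-type estimates. First I would dispatch the easy (\emph{only if}) directions. For (PR), if no non-empty $I$ with $\sum_{i\in I}a_i=0$ exists, then by Rado's criterion the \emph{linear} equation $a_1y_1+\cdots+a_sy_s=0$ already fails to be partition regular, and pulling back a bad colouring through $x\mapsto P(x)$ (coloring $n$ by the color of the ``linear value'' it represents, with a fixed color on integers not of the form $P(x)$) produces a bad colouring for (\ref{eqn1.2}) too; one must be slightly careful that non-constant solutions in the $x_i$ give non-constant solutions in the $P(x_i)$, which holds because $P$ has degree $d\ge 2$ only if we exclude the finitely many values where $P$ is non-injective, easily absorbed. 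For (DR), if $a_1+\cdots+a_s\neq 0$, take $A$ to be a Bohr-type set or an interval $[\eps N, N]$ scaled appropriately: since $P(x_i)\asymp x_i^d$, a dense set concentrated on a short dyadic range forces $a_1P(x_1)+\cdots+a_sP(x_s)$ to be bounded away from $0$ unless the $x_i$ are essentially equal, killing non-constant solutions --- this is the polynomial analogue of Roth's necessity argument.

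The substance is the (\emph{if}) direction, and here the strategy is to prove a quantitative \textbf{supersaturation theorem}: under the hypothesis $s\ge s_1(d)$, for any $A\subseteq\{1,\ldots,N\}$ with $|A|\ge\delta N$, the number of solutions $(x_1,\ldots,x_s)\in A^s$ to (\ref{eqn1.2}) is at least $c(\delta)N^{s-d}$ for large $N$, and likewise the number of non-constant such solutions is $\gg_\delta N^{s-d}$ once we subtract the $O(N^{s-d+1-?})$... in fact the diagonal contribution is $O(N)$ which is negligible against $N^{s-d}$ since $s-d\ge s_1(d)-d\ge 2$. For (DR) this is immediate; for (PR) one applies it inside a single colour class of density $\ge 1/r$ where $r$ is the number of colours --- but the honest (PR) statement needs the full Rado hypothesis, not just density, since the equation is inhomogeneous-looking once one localizes to residue classes. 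The standard device (going back to the $W$-trick and its polynomial refinements in \cite{CLP2021,Pre2021}) is: given the colouring, pass to an arithmetic progression $q\en+r$ on which one colour is dense, substitute $x_i=qy_i+r$, expand $P(qy_i+r)$, and use the set $I$ with $\sum_{i\in I}a_i=0$ together with intersectivity of $P$ to solve the resulting congruence obstructions --- intersectivity guarantees that for every modulus the relevant local equation is solvable, so the singular series is positive. Concretely one writes $x_i = m z_i + y_i$ where $m$ is a highly divisible modulus and the $y_i$ are chosen (finitely many choices) so that $P(x_i)\equiv P(y_i)\pmod{\text{large power}}$, reducing (\ref{eqn1.2}) to an equation $a_1 Q(z_1)+\cdots+a_sQ(z_s)=\text{const}$ with $Q$ essentially $P$ rescaled and the constant handled by the $I$-summing-to-zero trick applied to a constant monochromatic solution --- this is where the intersective hypothesis is essential and where the bookkeeping is heaviest.

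For the supersaturation estimate itself, I would run the circle method: write the solution count as $\int_{\T}\prod_i f_{a_i}(\alpha)\,d\alpha$ where $f_a(\alpha)=\sum_{x\in A}e(a P(x)\alpha)$, and split $\T$ into major arcs $\mathfrak{M}$ and minor arcs $\mathfrak{m}$. On the major arcs, standard Weyl-differencing plus the density of $A$ in residue classes yields the main term $\mathfrak{S}\,\mathfrak{I}\,N^{s-d}$ with singular series $\mathfrak{S}>0$ exactly because $P$ is intersective (each local factor is positive) and singular integral $\mathfrak{I}>0$; this is the only place degree-$d$ intersectivity (as opposed to having an integer root) matters, and it is handled by Hensel-type lifting. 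On the minor arcs one needs $\int_{\mathfrak m}\prod_i|f_{a_i}(\alpha)|\,d\alpha = o(N^{s-d})$; this is where the value $s_1(d)$ comes from and where the \textbf{main obstacle} lies. The route is a restriction estimate: one shows $\|f\|_{L^p(\T)}\ll N^{1-1/d}$ (or the appropriate $A$-weighted version) for a suitable even integer $p=2k$ via Vinogradov's mean value theorem / the resolution of the Main Conjecture (Bourgain--Demeter--Guth, or the efficient congruencing of Wooley), and combines this with a pointwise Weyl-type bound $\sup_{\mathfrak m}|f_a(\alpha)|\ll N^{1-\sigma(d)}$; Hölder then closes the circle provided $s$ exceeds $2k + O(\text{slack})$, and optimizing $k$ against the Weyl exponent produces precisely $d^2-d+2\lfloor\sqrt{2d+2}\rfloor+1$ for $d\ge 4$ and the small-$d$ values $5,9$ (where one can afford cruder inputs). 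The delicate point is making the restriction estimate work for the \emph{polynomial} $P$ rather than the monomial $X^d$ and uniformly over the coefficients $a_i$: one reduces $P$ to its leading behaviour by Weyl differencing, absorbs lower-order terms into the minor-arc analysis, and handles the coefficients $a_i$ by a change of variables on $\T$ --- each of these is routine individually but the uniformity bookkeeping, together with threading the intersective local solubility through the whole argument, is what makes this the technical heart of the paper.
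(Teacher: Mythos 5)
Your proposal contains two genuine gaps, one in the necessity argument and one in the sufficiency argument.

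\textbf{The DR necessity argument fails.} You propose taking $A$ to be a dyadic interval, claiming that $P(x_i)\asymp x_i^d$ forces $a_1P(x_1)+\cdots+a_sP(x_s)$ away from zero unless the $x_i$ are nearly equal. This is false: the equation $x_1^2+x_2^2-x_3^2=0$ has $a_1+a_2+a_3=1\neq 0$, yet the Pythagorean triple $(20,21,29)$ lies entirely in a dyadic range, and by scaling one gets positive density of such solutions in any union of dyadic intervals. The correct obstruction is local: the paper takes $A=\{n:n\equiv t\pmod p\}$ for a large prime $p$ with $p\nmid P(t)$, so that $a_1P(x_1)+\cdots+a_sP(x_s)\equiv (a_1+\cdots+a_s)P(t)\not\equiv 0\pmod p$ on $A^s$. (Your PR necessity sketch, while a bit different in mechanism from the paper's use of Lefmann's trick and Rado's criterion, is essentially sound.)

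\textbf{The sufficiency argument cannot close as described.} You propose running the circle method directly on $f_A(\alpha)=\sum_{x\in A}e(\alpha P(x))$ for a $\delta$-dense $A\subseteq[N]$, extracting a main term $\mathfrak{S}\,\mathfrak{I}\,N^{s-d}$ on the major arcs and bounding the minor arcs by restriction estimates. Restriction estimates do give $L^p$ control of $f_A$ (since $|1_A|\le 1_{[N]}$ is dominated by the relevant majorant), and that part would go through. But the major-arc analysis is unrecoverable: for an arbitrary dense $A$, the values $\hat{1}_A(a/q)$ at rationals are not determined by $|A|$, so there is no main term of the form $\mathfrak{S}\,\mathfrak{I}\,N^{s-d}$ coming from density alone. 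This is exactly the obstacle that forces the paper into a different architecture: the circle method is used only to establish pseudorandomness (Fourier decay as in \eqref{eqn5.1} and restriction estimates as in Lemmas \ref{lem6.1}--\ref{lem6.3}) of a weight $\nu$ modelling $\cS_W=\{(P(x)-P(b))/W_2: x\equiv b\pmod{W_1}\}$; then a \emph{transference principle} (\S\ref{sec7}) replaces $\nu 1_\cA$ by a bounded dense model $g$; and the actual counting of solutions is done for the \emph{linearised} equation $L_1(\bn)=L_2(P_D(\bz))$ using the \emph{arithmetic regularity lemma} and \emph{polynomial Bohr sets} (\S\ref{sec8}), not a major/minor arc dissection. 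Your proposal never mentions the regularity lemma, Bohr sets, or the asymmetric structure of Theorems \ref{thm3.4}--\ref{thm3.8} in which the linear variables $\bn$ come from a dense set while the polynomial variables $\bz$ come from a colour class; these are not optional refinements but the engine of the proof. You also skip Lucier's auxiliary polynomials $P_D(x)=P(r_D+Dx)/\lambda(D)$ and the bound $\gcd(P_D-P_D(0))\ll_P 1$ from \cite{Luc2006}, which are what make the $W$-trick uniform in the modulus; your sketch of ``substitute $x_i=qy_i+r$ and solve congruence obstructions'' elides precisely the point where intersectivity must be threaded through uniformly in $D$. Finally, the supersaturated PR statement (Theorem \ref{thm1.4}) requires Prendiville's cleaving argument on top of all this; a single application of a density count to a colour class of density $1/r$ does not suffice, because the colour class may be a sparse one when viewed against $A$.
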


\begin{remark} As we will soon clarify, intersectivity is also necessary.
\end{remark}

By performing a change of variables, one may interpret Theorem \ref{thm1.1} as generalisations of Rado and Roth's theorems to colourings and dense subsets respectively of the image set $P(\N):=\{P(1),P(2),P(3),\ldots\}$. 
More precisely, if $s\geqslant s_1(d)$ and $\sum_{i\in I}a_i =0$ for some non-empty $I\subseteq\{1,\ldots,s\}$, then Theorem \ref{thm1.1} asserts that the linear equation (\ref{eqn1.1}) admits non-constant monochromatic solutions with respect to any finite colouring of $P(\N)$. Similarly, if $a_1 + \cdots + a_s =0$, then Theorem \ref{thm1.1} implies that (\ref{eqn1.1}) has non-constant solutions over any set of positive integers $A$ satisfying
\begin{equation*}
    \limsup_{N\to\infty}\frac{|A\cap\{P(1),\ldots,P(N)\}|}{N}>0.
\end{equation*}

\subsection{Inhomogeneous equations}

Rado \cite{Rad1933} also studied inhomogeneous linear equations
\begin{equation}\label{eqn1.4}
    a_{1}x_{1} + \cdots + a_{s}x_{s} = b,
\end{equation}
where $a_1,\ldots,a_s$ are non-zero integers and $b$ is a fixed integer. Rado showed that every finite colouring of the positive integers admits (possibly constant) monochromatic solutions to (\ref{eqn1.4}) if and only if $(a_{1} + \cdots + a_{s})$ divides $b$. If one does not permit constant solutions, then it was noted by Hindman and Leader \cite[Theorem 3.4]{HL2006} that (\ref{eqn1.4}) is partition regular if and only if $(a_{1} + \cdots + a_{s})$ divides $b$ and $\sum_{i\in I}a_i =0$ for some non-empty $I\subseteq\{1,\ldots,s\}$. Note that, by considering solutions over a non-zero residue class modulo a sufficiently large prime $p$, equation
(\ref{eqn1.4}) cannot be density regular if $b\neq 0$.

Our second theorem, of which Theorem \ref{thm1.1} is a special case, comprehensively characterises partition and density regularity for arbitrary polynomial analogues  of (\ref{eqn1.4}) in sufficiently many variables.

\begin{thm}\label{thm1.3}
Let $d\geqslant 2$ be an integer, and define $s_1(d)$ by (\ref{eqn1.3}). 
Let $P$ be an integer polynomial of degree $d$, and let $s\geqslant s_1(d)$ be an integer. Let $a_1,\ldots,a_s$ be non-zero integers, and let $b$ be an integer. Consider the equation
\begin{equation}\label{eqn1.5}
    a_{1}P(x_{1}) + \cdots + a_{s}P(x_{s}) = b.
\end{equation}
\begin{enumerate}
    \item[(PR)]  The equation (\ref{eqn1.5}) is partition regular if and only if there exists a non-empty set $I\subseteq\{1,\ldots,s\}$ such that $\sum_{i\in I}a_i =0$ and an integer $m$ with $b=(a_{1}+\cdots +a_s)m$ such that $P(X)-m$ is an intersective polynomial.
    \item[(DR)] The equation (\ref{eqn1.5}) is density regular if and only if $b=a_1 +\cdots +a_s = 0$.
\end{enumerate}
\end{thm}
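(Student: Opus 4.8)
The plan is to deduce Theorem \ref{thm1.3} from Theorem \ref{thm1.1} together with a short reduction that absorbs the inhomogeneity into the polynomial. For the density regularity statement (DR), the forward direction is the easy observation already flagged in the text: if $b \neq 0$, take a large prime $p$ with $p \nmid b$ and work inside a residue class $r + p\Z$ on which $P$ is constant modulo $p$ in a way that forces $a_1 P(x_1) + \cdots + a_s P(x_s) \not\equiv b \pmod p$ for any choice of variables from that class — such a class has positive density and no solutions. If $b = 0$ but $a_1 + \cdots + a_s \neq 0$, one colours (or rather, selects a dense set) using a single non-zero congruence class modulo a large prime to kill all non-constant solutions; I would simply cite Roth's obstruction as in the linear case, transported through $P$ as in the remarks after Theorem \ref{thm1.1}. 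The converse, $b = a_1 + \cdots + a_s = 0 \implies$ density regular, is exactly the (DR) half of Theorem \ref{thm1.1}, so nothing new is needed.

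For the partition regularity statement (PR), the key algebraic manoeuvre is: suppose there is a non-empty $I \subseteq \{1,\ldots,s\}$ with $\sum_{i \in I} a_i = 0$ and an integer $m$ with $b = (a_1 + \cdots + a_s)m$ such that $Q(X) := P(X) - m$ is intersective. Then I would rewrite equation (\ref{eqn1.5}) as
\begin{equation*}
a_1 Q(x_1) + \cdots + a_s Q(x_s) = b - (a_1 + \cdots + a_s)m = 0,
\end{equation*}
which is precisely equation (\ref{eqn1.2}) for the intersective polynomial $Q$ of degree $d$. Since $s \ge s_1(d)$, the (PR) half of Theorem \ref{thm1.1} applied to $Q$ gives partition-regular monochromatic non-constant solutions, and these are exactly monochromatic non-constant solutions of (\ref{eqn1.5}). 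This handles sufficiency.

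For necessity in (PR), assume (\ref{eqn1.5}) is partition regular. The Rado-type obstruction, run exactly as in the proof of the necessity direction of Theorem \ref{thm1.1} — colouring the positive integers by the leading non-zero digit in a base-$q$ expansion for a suitable large $q$, pulled back through $P$ — should force the existence of a non-empty $I$ with $\sum_{i \in I} a_i = 0$; the point is that partition regularity of (\ref{eqn1.5}) implies partition regularity of the associated homogeneous equation after a pigeonhole/compactness step, since $b$ is a fixed constant and the colouring argument is insensitive to it for large $q$. Given such an $I$, write $a := a_1 + \cdots + a_s$. If $a = 0$, then constant solutions $x_1 = \cdots = x_s = t$ give $0 = b$, and one must argue $b = 0$ separately (e.g. a prime-modulus colouring), after which $m = 0$ and $Q = P$; partition regularity of (\ref{eqn1.2}) then forces $P$ intersective by the remark following Theorem \ref{thm1.1}. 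If $a \neq 0$: using a single-colour (trivial colouring) instance, a constant solution forces $a P(t) = b$ for some $t$, so $a \mid b$; set $m := b/a \in \Z$ and $Q := P - m$, reducing (\ref{eqn1.5}) to (\ref{eqn1.2}) for $Q$; partition regularity of the latter then forces $Q$ intersective, again by the necessity of intersectivity noted after Theorem \ref{thm1.1}.

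The main obstacle I anticipate is the necessity of intersectivity — i.e. proving that if $a_1 Q(x_1) + \cdots + a_s Q(x_s) = 0$ is partition regular then $Q$ must be intersective. The natural argument is a local one: if $Q$ has no root modulo some prime power $n$, then colouring $x$ by the residue of $Q(x)$ modulo $n$ (equivalently, $p$-adically) should obstruct monochromatic solutions, because within a colour class $Q$ takes a constant non-zero value $c \bmod n$ and $\sum a_i Q(x_i) \equiv c \sum a_i \pmod n$, which one needs to be non-zero modulo $n$ — this requires choosing $n$ coprime to $\sum_{i \in I} a_i$ issues and handling the case $\sum a_i = 0$ by instead obstructing modulo a prime $p \nmid$ (content of $Q$) where $Q$ is nonvanishing. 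Making this colouring argument genuinely work in all residual cases, and confirming it is already established in the earlier sections of the paper rather than needing to be proved here, is the delicate point; everything else is bookkeeping around the substitution $P \mapsto P - m$.
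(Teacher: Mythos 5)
Your high-level strategy matches the paper exactly: for sufficiency, rewrite (\ref{eqn1.5}) as the homogeneous equation (\ref{eqn2.1}) in the intersective polynomial $Q=P-m$ and invoke Theorem \ref{thm1.1}; for necessity, run modular/residue-class obstructions to pin down $b$, $m$, and the Rado condition. The sufficiency half of your proposal is essentially correct (though for the (DR) converse you still need to replace $P$ by the intersective polynomial $P-P(0)$ before invoking Theorem \ref{thm1.1}, since the latter assumes intersectivity).

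However, the necessity half contains genuine errors. In the case $a := a_1+\cdots+a_s = 0$, you insist on $m=0$ and then claim partition regularity of (\ref{eqn1.2}) forces $P$ intersective. This is false: with $a=0$ and $b=0$, the correct move is to take $m = P(0)$, which trivially makes $P-m$ intersective and satisfies $b=am$ for free; no constraint on $P$ is forced (e.g.\ $P(X)=X^2+1$ with $a_1=\cdots=a_5=1$, $a_6=\cdots=a_{10}=-1$ gives a partition regular equation even though $X^2+1$ is not intersective). In the case $a\neq 0$, you derive $a\mid b$ from ``a constant solution under the trivial colouring,'' but partition regularity in this paper requires \emph{non-constant} monochromatic solutions, so the trivial colouring gives you no constant solution and hence no identity $aP(t)=b$ over $\Z$. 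The correct argument, used in Proposition \ref{prop2.2}, is modular: partition $\N$ into residue classes mod $q$, use that $P(x_i)\equiv P(t)\pmod q$ when $x_i\equiv t\pmod q$, and deduce $b\equiv aP(t)\pmod q$; taking $q$ through multiples of $a$ (or arbitrarily large if $a=0$) yields $a\mid b$. Finally, the two steps you flag as ``delicate'' --- that partition regularity of (\ref{eqn1.5}) forces the Rado condition $\sum_{i\in I}a_i=0$, and that it forces $Q=P-m$ intersective when $a\neq0$ --- are not actually proved in your proposal; in the paper the former is Proposition \ref{prop2.1} (Lefmann's trick plus Rado's criterion for the underlying linear equation), and the latter is the prime-power argument in Proposition \ref{prop2.2} (choosing $p^k$ with no solution to $P(x)\equiv m$, then $h$ with $p^h\nmid a$, and obstructing mod $p^{h+k}$). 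These need to be carried out, not asserted.
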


Note that, if $a_1 + \cdots + a_s \neq 0$, then Theorem \ref{thm1.3} implies that (\ref{eqn1.5}) is partition regular only if $P$ is an intersective polynomial. In the case where $a_1 + \cdots + a_s = 0$, we see that the set of solutions to (\ref{eqn1.5}) is unchanged if we replace $P$ by the intersective polynomial $P - P(0)$. Thus, intersectivity is absolutely vital for partition regularity, and is not merely a technical assumption in Theorem \ref{thm1.1}. Our results are definitive in this regard, and also definitive in terms of the coefficients $a_1,\ldots,a_s,b$.

In terms of the number of variables required, our results are state of the art in the sense that they match current progress on the asymptotic formula in Waring's problem. In the monomial case, the second author found with Lindqvist and Prendiville \cite{CLP2021} that $(1+o(1))d\log d$ variables suffice to characterise partition regularity. However, reducing the number of variables in that way requires estimates for moments of smooth Weyl sums that depend crucially on the multiplicative structure of the polynomial $P(x) = x^d$.

\subsection{Supersaturation}

Frankl, Graham, and R\"{o}dl \cite[Theorem 1]{FGR1988} obtained a stronger, quantitative version of Rado's theorem for systems of linear homogeneous equations. More precisely, they showed that, for a given partition regular system of linear equations and for sufficiently large $N$, a positive proportion of all solutions to the system over $\{1,\ldots,N\}$ become monochromatic under any $r$-colouring of $\{1,\ldots,N\}$. They also obtained an analogous result for density regular linear systems \cite[Theorem 2]{FGR1988}. This phenomenon, in which a positive proportion of solutions are found to be monochromatic or lie over an arbitrary dense set, is termed \emph{supersaturation}, in analogy with similar results from extremal combinatorics.

One significant corollary of supersaturation results is that one can obtain monochromatic solutions which are \emph{non-trivial}, in the sense that the variables of the solution are distinct. This may be readily deduced from supersaturation if one can first show that the set of trivial solutions is sparse in the set of all solutions. 

In previous work of the second author with Lindqvist and Prendiville \cite[Theorem 1.4]{CLP2021}, it was shown that partition regular equations of the form (\ref{eqn1.2}) with $P(x)=x^2 -1$ and $s\geqslant 5$ satisfy supersaturation. They also obtained similar results for partition regular linear homogeneous equations in logarithmically smoothed numbers \cite[Theorem 1.5]{CLP2021}. 
More recently, Prendiville \cite[Theorem 1.7]{Pre2021} has established supersaturation for partition regular equations (\ref{eqn1.2}) in the case where $P(x)=x^2$ and $s\geqslant 5$.

Our next theorem demonstrates that partition and density regular equations (\ref{eqn1.2}) in sufficiently many variables satisfy supersaturation. Furthermore, as per the remark above, we can ensure that the solutions we obtain are non-trivial.

\begin{thm}\label{thm1.4}
Let $d\geqslant 2$ be an integer, and define $s_1(d)$ by (\ref{eqn1.3}). 
Let $P$ be an intersective integer polynomial of degree $d$. Let $s\geqslant s_1(d)$ be an integer, and let $a_1,\ldots,a_s$ be non-zero integers. Given a set of integers $\cA$, write
\begin{equation*}
    \cS(\cA) := \{(x_{1},\ldots,x_s)\in\cA^s : x_i\neq x_j \text{ for all } i\neq j, \text{ and }
    a_1P(x_1) + \cdots + a_s P(x_s) =0\}.
\end{equation*}
\begin{enumerate}
    \item[(PR)]  If there exists a non-empty set $I\subseteq\{1,\ldots,s\}$ such that $\sum_{i\in I}a_i =0$, then for any positive integer $r$ there exists a positive real number $c_1(r) = c_1(P;a_1,\ldots,a_s;r)$ and a positive integer $N_1 = N_1(P;a_1,\ldots,a_s;r)$ such that the following is true for any positive integer $N\geqslant N_1$. Given any $r$-colouring $\{1,\ldots,N\}=\cC_1 \cup\cdots\cup \cC_r$, there exists $k\in\{1,\ldots,r\}$ such that
    $|\cS(\cC_k)|\geqslant c_1(r)N^{s-d}$.
    \item[(DR)] If $a_1 +\cdots +a_s = 0$, then for any positive real number $\delta>0$ there exists a positive real number $c_2(\delta) = c_2(P;a_1,\ldots,a_s;\delta)$ and a positive integer $N_2 = N_2(P;a_1,\ldots,a_s;\delta)$ such that the following is true for any positive integer $N\geqslant N_2$. Given any set $A\subseteq\{1,\ldots,N\}$ satisfying $|A|\geqslant\delta N$, we have $|\cS(A)|\geqslant c_2(\delta)N^{s-d}$.
\end{enumerate}
\end{thm}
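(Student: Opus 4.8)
\textbf{Proof proposal for Theorem \ref{thm1.4}.}

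The plan is to establish supersaturation by combining a density-type counting result for the equation (\ref{eqn1.2}) with a colour-focusing (or compactness) argument for the partition case, following the Hardy--Littlewood circle method architecture that presumably underlies the proof of Theorems \ref{thm1.1} and \ref{thm1.3}. The core analytic input I would extract from the earlier sections is a \emph{generalised von Neumann inequality together with a positive lower bound on the main term}: writing $f_i = 1_{\cC_k}$ or $1_A$ (suitably $W$-tricked to remove local obstructions, and supported on the intersective shift $P(\Z)$), the count of solutions in $\{1,\ldots,N\}$ decomposes as a product of local densities times $N^{s-d}$ plus an error controlled by a Gowers-type or minor-arc norm of the balanced functions. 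Since $s \ge s_1(d)$ matches the threshold for the asymptotic formula in Waring's problem, the minor-arc contribution is $o(N^{s-d})$ uniformly, and the singular series and singular integral are bounded below by positive constants precisely because $P$ (or $P - m$) is intersective and $\sum_{i \in I} a_i = 0$ (resp.\ $\sum_i a_i = 0$) guarantees a real solution to work around. This yields, for (DR), that $|\cS(A)| \gg_\delta N^{s-d}$ once we subtract the trivial solutions.

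For part (PR), I would deduce supersaturation from the density statement by a standard averaging/colour-focusing step: given an $r$-colouring, one colour class $\cC_k$ has density $\ge 1/r$ on a long arithmetic-progression-like structure adapted to the $W$-trick, and since the set $I$ with $\sum_{i\in I} a_i = 0$ lets us ``pad'' a solution supported entirely in $\cC_k$ (fixing the variables outside $I$ to a common value $y \in \cC_k$, which is legitimate because those coefficients sum appropriately), we reduce to finding many solutions of $\sum_{i \in I} a_i P(x_i) = -\big(\sum_{i \notin I} a_i\big) P(y) = 0$ with all $x_i$ in one colour class. Running the density count for the sub-equation indexed by $I$ (which is itself density regular after the reduction, or at least has a positive-proportion lower bound via the same circle method estimate) gives $\gg_r N^{|I| - d}$ monochromatic solutions to the padded system for a positive proportion of choices of $y$, hence $\gg_r N^{s-d}$ in total. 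Here one must be slightly careful that the padding variables and the $x_i$ are chosen distinctly; this is where the final clause about non-triviality enters.

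The two technical points requiring care are: (i) \emph{removing trivial solutions}, i.e.\ showing $|\{(x_1,\ldots,x_s) \in \cS : x_i = x_j \text{ for some } i\ne j\}| = O(N^{s-d-\eta})$ for some $\eta>0$ or at least $o(N^{s-d})$, so that it can be absorbed into the main term --- this follows because collapsing two variables reduces the number of free variables by one while the equation still has degree $d$, giving a bound $O(N^{(s-1)-d} \cdot \log N)$ or similar from the $(s-1)$-variable count, which is genuinely smaller provided $s-1 \ge s_1(d)$ suffices for an upper bound of the right shape (a one-sided circle-method estimate, not needing the full asymptotic); and (ii) \emph{uniformity of the constants} $N_1, N_2$ and $c_1, c_2$ in the colouring/set but allowing dependence on $P, a_1,\ldots,a_s, r$ (resp.\ $\delta$), which is automatic from the circle method since all estimates are uniform over $1_A$ with $|A| \ge \delta N$. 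I expect the main obstacle to be point (i) in the range where $s = s_1(d)$ exactly: one needs the degenerate subvariety counts (two variables equal, or more elaborate coincidences) to beat $N^{s-d}$, and this may require invoking the earlier paper's minor-arc estimates in $s-1$ or fewer variables, or a separate elementary divisor argument bounding $\#\{x : P(x) \equiv c \bmod q\}$ and the number of representations. Everything else is bookkeeping on top of the asymptotic formula already (implicitly) proven for Theorems \ref{thm1.1} and \ref{thm1.3}.
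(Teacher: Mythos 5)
Your plan diverges from the paper's proof and contains two concrete gaps.

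First, the padding reduction you propose for the partition statement (PR) is incorrect. You fix the variables outside $I$ to a common value $y\in\cC_k$ and claim the remaining equation is $\sum_{i\in I}a_iP(x_i)= -\bigl(\sum_{i\notin I}a_i\bigr)P(y)=0$. But nothing forces $\sum_{i\notin I}a_i=0$ — the only hypothesis is $\sum_{i\in I}a_i=0$ — so after padding you are left with an \emph{inhomogeneous} equation whose right-hand side depends on $y$. The reduction to a homogeneous density count fails at exactly this point. The paper instead proves a \emph{mixed} counting statement (Theorem \ref{thm3.4}) with $\bx\in A^s$ and $\by\in\cC_k^t$, and then extracts a single-colour statement (Theorem \ref{thm3.5}) by Prendiville's \emph{cleaving} argument: iterate Theorem \ref{thm3.4} at a decreasing sequence of density thresholds and pigeonhole to force the returned colour class $\cC_k$ to be the same twice, at which point one may take $A=\cC_k$. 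That is a fundamentally different mechanism from colour focusing via padding.

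Second, your density (DR) argument presupposes a circle-method asymptotic with a positive singular series and singular integral for solutions with variables in an arbitrary dense set $A$. No such asymptotic exists: the set $A$ can be highly structured (e.g.\ concentrated in a Bohr set), and the leading constant depends on $A$, not just on $\delta$. What the paper actually does is: (i) pass to the auxiliary intersective polynomial $P_D$ via Lucier's construction and the $W$-trick, with a weight $\nu$ adapted to $A$ lying in a single good residue class mod $W\kappa$; (ii) establish Fourier decay and restriction estimates for $\nu$ and for a related weight $\mu_D$ (Sections \ref{sec5}--\ref{sec6}); (iii) invoke a transference principle to replace $\nu 1_{\cA}$ by a bounded dense model $g$; and (iv) prove the linearised counting result (Theorem \ref{thm3.8}) using Green's arithmetic regularity lemma, lower-bounding the structured contribution by means of \emph{intersective polynomial Bohr sets}. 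The Lucier auxiliary polynomials $P_D$ and the uniform bound $\gcd(P_D-P_D(0))\ll_P 1$ are essential here, not a cosmetic $W$-trick — this is the key technical input handling general intersective $P$ (which need not be a translate of $x^d$), and it is absent from your proposal. On the other hand, your handling of trivial solutions (point (i)) is on the right track and matches Lemma \ref{lem3.2}: the Hölder/moment bound there already gives $O(X^{s-d-1+\eps+d/(s-1)})$ for two variables equal, which is $o(X^{s-d})$ once $s\geq s_0(d)\geq 2d+1$, so the worry about $s=s_1(d)$ exactly is not an obstacle.
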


\subsection{Linearised equations}

In the course of proving our main theorems, we are led to study certain `linearised' equations. These take the form
\begin{equation}\label{eqn1.6}
    L_1(\bn) = L_2(P(\bz)),
\end{equation}
where $P$ is an integer polynomial and $P(\bz)=(P(z_1),\ldots,P(z_{t}))$, for some non-degenerate linear forms $L_1$ and $L_2$ such that $L_1(1,\ldots,1)=0$. Here, a \emph{non-degenerate linear form in $t$ variables} $L:\Z^{t}\to\Z$ is a multilinear map of the form $L(\bx)=b_1 x_1 +\cdots +b_t x_t$, where $b_1,\ldots,b_t$ are non-zero integers. This naturally provides us with an opportunity to consider partition regularity criteria for such linearised equations (\ref{eqn1.6}). 

Recently, Prendiville \cite{Pre2021} studied the equation (\ref{eqn1.6}) in the case where $P(z)=z^2$, obtaining necessary and sufficient conditions for partition regularity as well as a counting result for certain partition regular equations of this form.
By incorporating Prendiville's `cleaving' strategy into our methods, we obtain a counting result on partition regularity for linearised equations (\ref{eqn1.6}) in sufficiently many variables.

\begin{thm}\label{thm1.5}
Let $d\geqslant 2$ and $r$ be positive integers, and let $s_1(d)$ be defined by (\ref{eqn1.3}).
Let $P$ be an intersective integer polynomial of degree $d$.
Let $s$ and $t$ be positive integers satisfying $s+t\geqslant s_1(d)$. Let $L_1$ and $L_2$ be non-degenerate linear forms in $s$ and $t$ variables respectively, and assume that $L_{1}(1,1,\ldots,1)=0$. There exists a positive constant $c_0=c_0(L_1,L_2,P,r)$ and a positive integer $N_{0}=N_{0}(L_1,L_2,P,r)$ such that the following holds for every positive integer $N\geqslant N_0$. For any $r$-colouring $\{1,\ldots,N\}=\cC_{1}\cup\cdots\cup\cC_r$, there exist $k\in\{1,\ldots,r\}$ such that, on writing $M:=N^{d^{-r}}$, we have
\begin{equation*}
     \{ (\bn,\bz) \in \cC_k^{s} \times \cC_k^{t}: L_1(\bn) = L_2(P(\bz)) \} \geqslant c_0 M^{d(s-1)+t}.
\end{equation*}
\end{thm}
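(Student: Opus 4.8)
The plan is to combine a density-increment/restriction-style counting result for the linearised equation over a single dense set with Prendiville's cleaving strategy, which produces a long arithmetic-progression-like structure on which one colour class is dense. First I would establish a \emph{supersaturation} statement for (\ref{eqn1.6}) over an arbitrary dense subset $A\subseteq\{1,\ldots,M\}$: if $|A|\ge\delta M$ then the number of $(\bn,\bz)\in A^s\times A^t$ with $L_1(\bn)=L_2(P(\bz))$ is $\gg_\delta M^{d(s-1)+t}$. The exponent here is forced by the heuristic count: there are $M^{s}$ choices of $\bn$, $M^t$ choices of $\bz$, the equation $L_1(\bn)=L_2(P(\bz))$ restricts to a set of size $\asymp M^{d}$ (since $P(\bz)$ ranges over $\asymp M^d$ values and $L_1(1,\ldots,1)=0$ makes $L_1(\bn)$ typically of size $M$, but the relevant constraint after rescaling is of order $M^d$), so one expects $M^{s+t}\cdot M^{d}/M^{?}$; carefully, writing $L_1(\bn)=h$ and counting representations, the correct normalisation gives $M^{d(s-1)+t}$. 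This counting lemma should follow from the circle method input already assembled in the paper: a Weyl-type estimate and mean-value (restriction) estimate for the exponential sum $\sum_{z\le M}e(\alpha P(z))$ valid when $s+t\ge s_1(d)$, together with an $L^\infty$--$L^2$ transference to handle the dense set $A$ via its Fourier expansion, exactly as in the proofs of the (DR) halves of Theorems~\ref{thm1.1}--\ref{thm1.4}.

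Next I would invoke cleaving. Given an $r$-colouring of $\{1,\ldots,N\}$, Prendiville's argument (iterated at most $r$ times, each step passing to a sub-progression and losing a $d$-th-power-root in the length, which is why $M=N^{d^{-r}}$ appears) produces a colour class $\cC_k$ and an affine image of $\{1,\ldots,M\}$ — more precisely a dilated, translated copy $q\cdot\{1,\ldots,M\}+u$ — on which $\cC_k$ occupies a positive proportion, and such that the dilation $q$ is compatible with the polynomial structure (one needs $P$ evaluated on the progression to still behave like a polynomial of degree $d$; this is where intersectivity of $P$ is used, to pass to a congruence class on which $P$ factors through a polynomial with no local obstructions). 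The point is that $L_1,L_2$ being linear means the equation $L_1(\bn)=L_2(P(\bz))$ is preserved, up to adjusting $L_2$ by the constant $L_2(P(u),\ldots)$-type terms and rescaling, precisely because $L_1(1,\ldots,1)=0$ kills the contribution of the translation $u$ in the $\bn$-variables. Then I would apply the counting lemma from the first paragraph inside this copy of $\{1,\ldots,M\}$ with $A=$ the dense trace of $\cC_k$, obtaining $\gg M^{d(s-1)+t}$ monochromatic solutions, which is the claimed bound.

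The main obstacle I expect is bookkeeping in the cleaving step: ensuring that after each dilation the pair $(L_1,L_2)$ together with $P$ can be renormalised to an equation of exactly the same shape (non-degenerate linear forms, intersective polynomial of degree $d$, the condition $L_1(1,\ldots,1)=0$ intact) so that the induction on $r$ closes, and tracking how the constants $c_0$ and $N_0$ depend on $L_1,L_2,P,r$ through the iteration. A secondary technical point is that the counting lemma must be \emph{uniform} over the class of equations that arise in the iteration (i.e.\ the implied constant should depend only on $d$, $s$, $t$ and the sizes of the coefficients of $L_1,L_2,P$), so that shrinking to a sub-progression does not degrade it. Everything else — the Weyl and restriction estimates, the transference to dense sets — is supplied by the machinery the paper has already developed for Theorems~\ref{thm1.1}--\ref{thm1.4}, so the novelty is purely in adapting the argument to the linearised form and in the interaction with cleaving.
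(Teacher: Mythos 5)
Your first step claims a supersaturation lemma with both $\bn$ and $\bz$ drawn from the \emph{same} dense set $A\subseteq[M]$, giving $\gg_\delta M^{d(s-1)+t}$ solutions. This cannot be right: the trivial upper bound on the number of pairs $(\bn,\bz)\in A^s\times A^t$ is $M^{s+t}$, and $d(s-1)+t>s+t$ whenever $(d-1)(s-1)>1$, which holds for essentially every admissible $(d,s)$ here (recall $s\ge 2$ since $L_1$ is non-degenerate and $L_1(1,\ldots,1)=0$). In fact, with $\bn,\bz\in[M]$ the constraint $L_1(\bn)=L_2(P(\bz))$ forces $L_2(P(\bz))=O(M)$ while generically $L_2(P(\bz))\asymp M^d$, so the true count is far below $M^{s+t}$, not above it. Your heuristic ("the relevant constraint after rescaling is of order $M^d$") papers over exactly this scale mismatch, which is the whole point of the linearised setup.

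The correct formulation --- and what the paper actually proves as Theorem~\ref{thm3.8} --- has the two blocks of variables at \emph{different} scales: $\bn$ ranges over a set $\cA\subseteq[N]$ with $N=P_D(Z)\asymp Z^d$, while $\bz$ ranges over a colour class inside $[Z]$. Then the count $N^{s-1}Z^t\asymp Z^{d(s-1)+t}$ is the natural one. Correspondingly, the cleaving step (Theorem~\ref{thm3.11}) is not just "pass to a sub-progression on which $\cC_k$ is dense": it is a genuinely multi-scale pigeonhole. One sets $Z_i=P(Z_{i-1})$, applies the asymmetric counting theorem at each scale to get indices $k_0,\ldots,k_r\in[r]$, finds $i<j$ with $k_i=k_j=:k$, and then uses density of $\cC_k$ at the \emph{larger} scale $Z_j$ to extract an interval $I$ of length $Z_{i+1}=P(Z_i)$ on which $\cC_k$ is dense (this interval feeds the $\bn$-variables, and translation-invariance $L_1(1,\ldots,1)=0$ lets you recentre $I$ to $[Z_{i+1}]$), while $\cC_k\cap[Z_i]$ feeds the $\bz$-variables. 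Your description of cleaving as iterating on a single shrinking progression, and then feeding both variable blocks from the same scale-$M$ set, misses this two-scale structure and will not close.

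To repair the proposal you would need to (i) state the counting lemma asymmetrically, with $\bn$-variables in $[P_D(Z)]$ and $\bz$-variables in $[Z]$, and with the freedom to take $\bn$ from an arbitrary dense set and $\bz$ from a colour class --- this is Theorem~\ref{thm3.8}, proved in the paper via the arithmetic regularity lemma and a polynomial-Bohr-set argument rather than a direct circle-method transference; and (ii) replace the single-scale cleaving by the two-scale pigeonhole described above. The $W$-trick/$\nu$-majorant/transference machinery of \S\ref{sec4}--\ref{sec7} is used to deduce Theorem~\ref{thm3.4} (the non-linearised statement) from Theorem~\ref{thm3.8}, but is not what drives Theorem~\ref{thm1.5}; so the parts of your sketch invoking that machinery are pointed at the wrong target.
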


As observed by Prendiville \cite{Pre2021}, bounds of this shape are sharp for generic linearisaed equations (\ref{eqn1.6}). Consider, for example, the equation
\begin{equation*}
    n_1 + \cdots + n_{s-1} - (s-1)n_s = s(z_1^d + \cdots + z_t^d).
\end{equation*}
If $(\bn,\bz)\in\{1,\ldots,N\}^{s+t}$ is a solution to this equation, then we see that $z_1,\ldots,z_t\leqslant N^{1/d}$. Consequently, if we colour $\{1,\ldots,N\}=\cC_{1}\cup\cdots\cup\cC_r$ by taking $\cC_r := \{1,\ldots,M^d\}$ and
\begin{equation*}
    \cC_i := \left\lbrace x\in\bN: N^{d^{-i}} <x\leqslant N^{d^{-(i-1)}}\right\rbrace \quad (1\leqslant i<r),
\end{equation*}
then we find that all monochromatic solutions to our equation come from $\cC_r$. We therefore conclude that there are at most $M^{d(s-1) +t}$ monochromatic solutions, which is within a constant factor of the lower bound given in Theorem \ref{thm1.5}.

\subsection{Methods}

As in the previous works \cite{Cha2022,Cho2018,CLP2021}, a key step in our argument is the application of a Fourier analytic \emph{transference principle}. The transference principle was originally developed by Green \cite{Gre2005A} to obtain solutions to linear equations in primes, and has subsequently been adapted to finding solutions over numerous different sets of arithmetic interest, such as the $k$th powers \cite[Part 2]{CLP2021}, logarithmically smooth numbers \cite[Part 3]{CLP2021}, and $k$th powers of primes \cite{Cho2018}.

If we assume that there exists a non-empty set $I\subseteq\{1,\ldots,s\}$ such that $\sum_{i\in I}a_i =0$, then we may rewrite (\ref{eqn1.2}) as
\begin{equation*}
    \sum_{i\in I}a_i P(x_{i}) = \sum_{j\in \{1,\ldots,s\}\setminus I}b_j P(x_{j}),
\end{equation*}
where $b_j = -a_j$ for all $j$. We then \emph{linearise} this equation to obtain a new equation
\begin{equation*}
    \sum_{i\in I}a_i n_{i} = \sum_{j\in \{1,\ldots,s\}\setminus I}b_j P_{D}(z_{j})
\end{equation*}
in variables $n_i$ and $z_j$, where $P_D$ is some auxiliary intersective polynomial (see (\ref{eqn3.9})). Counting solutions to this linearised equation may be accomplished more easily by using the arithmetic regularity lemma. We then use a transference principle to `transfer' solutions of the linearised equation to the original equation (\ref{eqn1.2}).

The main contribution of this article is the development of a transference principle for intersective polynomials. Given an intersective polynomial $P$, we consider a $W$-tricked version of the image set $\{P(1),P(2),\ldots\}$, namely, a set of the form
\begin{equation*}
    \cS_W=\left\{ \frac{P(x)-P(b)}{W_2}: x \equiv b \mmod{W_1} \right \},
\end{equation*}
for some $1\leqslant b\leqslant W_1$.
Here, $W$ is a product of powers of small primes, and $W\mid W_1 \mid W_2$ (see \S\ref{sec4} for further details). The upshot of working with this $W$-tricked set is that the elements of this new set are equidistributed in residue classes for small primes, whereas the original image set is not (consider, for example, the case where $P(X)=X^2$).

To establish the desired transference principle, we construct a pseudorandom majorant of the set $\cS_W$ defined above. This is carried out in \S\S\ref{sec4}--\ref{sec7}, and makes use of the Hardy--Littlewood circle method. In particular, we study the properties of exponential sums of the form 
\begin{equation*}
    \sum_{y\leqslant q}e_{q}(aP_{D}(y)),
\end{equation*}
where $P_D$ is some auxiliary intersective polynomial defined in terms of some parameter $D\in\N$ (in our applications, one can take $D=W^2)$, and as usual $e_q(ax):=\exp(2\pi i ax/q)$. One difficulty that arises here is that we require restriction estimates that are independent of the parameter $D$, though the coefficients of $P_D$ increase with $D$. To establish these uniform bounds, we exploit a key insight of Lucier~\cite{Luc2006}, that one can nevertheless bound the greatest common divisor of the coefficients of $P_D(X)-P_D(0)$ in terms of $P$ alone.

Finally, having applied the transference principle, it remains to prove that the linearised equation (\ref{eqn1.6}) admits many solutions $(\bn,\bz)$ with the $z_j$ lying in a (translated and dilated) colour class and the $n_i$ lying in a dense subset of $\{1,\ldots,N\}$. This is achieved by appealing to the arithmetic regularity lemma, as in \cite{Cha2022,Pre2021}.

We finish this subsection with a brief description of how we deal with the colouring aspect. There is an increasingly popular mantra that every colouring phenomenon is driven by an underlying density phenomenon. In the case of homogeneous equations, the connection was solved in practice by the second author with Lindqvist and Prendiville \cite{CLP2021} using \emph{homogeneous sets}. Subsequently, a full theoretical explanation was provided by the first author \cite{Cha2020}, who showed that homogeneous systems are partition regular if and only if they admit solutions with variables drawn from an arbitrarily given homogeneous set. A fresh hurdle that arises in the present work, compared with \cite{CLP2021}, is that our equation is inhomogeneous, and so the theory of homogeneous sets does not help us.
We resolve the issue by choosing the colour class which has the largest intersection with a certain polynomial Bohr set. For supersaturation, we supplement this with Prendiville's new cleaving technique, as alluded to earlier.

\subsection{Organisation}

We begin in \S\ref{sec2} by swiftly establishing necessary conditions for equations (\ref{eqn1.2}) and (\ref{eqn1.5}) to be partition or density regular. In particular, we prove the `only if' parts of Theorem \ref{thm1.1} and Theorem \ref{thm1.3}. We also give a short proof that Theorem \ref{thm1.4} implies Theorem \ref{thm1.1}, and Theorem \ref{thm1.1} implies Theorem \ref{thm1.3}.

In \S\ref{sec3}, we state the two main results which are the focus of this paper: Theorem \ref{thm3.4} and Theorem \ref{thm3.8}. We show that these two theorems imply all of our results stated above. We also recall some useful properties on intersective polynomials from \cite{Luc2006}, in particular, the notion of \emph{auxiliary intersective polynomials}.

The next four sections, \S\S\ref{sec4}--\ref{sec7}, are used to prove that Theorem \ref{thm3.4} follows from Theorem \ref{thm3.8}. In \S\ref{sec4}, we apply the $W$-trick and introduce the majorant $\nu$, the latter of which is the focus of our investigations in the next two sections. In \S\ref{sec5}, we use the Hardy--Littlewood circle method to establish a Fourier decay estimate for $\nu$. We continue in \S\ref{sec6} by establishing restriction estimates for $\nu$ and for a related majorant $\mu_D$. The conclusions of these three sections are combined in \S\ref{sec7} to apply a transference principle, which is used to complete the proof that Theorem \ref{thm3.8} implies Theorem \ref{thm3.4}.

Finally, in \S\ref{sec8}, we prove Theorem \ref{thm3.8} by using a version of Green's arithmetic regularity lemma.

We also include a section in the appendix on polynomial congruences, the results of which are used in \S\ref{sec4} to execute the $W$-trick.

\subsection*{Notation} 
Let $\N$ denote the set of positive integers. For each prime $p$, let $\bQ_p$ and $\bZ_p$ denote the $p$-adic numbers and the $p$-adic integers respectively. Given a real number $X>0$, we write $[X] := \{n\in\N:n\leqslant X\}$.
Set $\bT = [0,1]$.
For $q \in \bN$ and $x \in \bR$, we write $e(x) = e^{2 \pi i x}$ and $e_q(x) = e(x/q)$. 
For $P(x) \in \bZ[x]$ and $\bx = (x_1,\ldots,x_s)$, where $s \in \bN$, we abbreviate $P(\bx) = (P(x_1),\ldots,P(x_s))$. 
If $P$ is a polynomial with integer coefficients, we write $\gcd(P)$ for the greatest common divisor of its coefficients. The letter $\eps$ denotes a small, positive constant, whose value is allowed to differ between separate occurrences. We employ the Vinogradov and Bachmann--Landau asymptotic notations, with the implied constants being allowed to depend on $\eps$. 
For a finitely supported function $f: \bZ \to \bC$, the \emph{Fourier transform} $\hat{f}$ is defined by
\begin{equation*}
 \hat f(\alp) := \sum_{n \in \bZ} f(n) e(\alp n) \qquad (\alp \in \bR).   
\end{equation*}

\subsection*{Acknowledgements} 
This project began when JC visited SC at the University of Warwick as part of a London Mathematical Society Early Career Fellowship, and was completed when SC visited JC at the University of Bristol. We are grateful to these institutions for their hospitality and support.

We thank Christopher Frei, Sean Prendiville and Trevor Wooley for helpful conversations.

\subsection*{Funding}

JC was supported by an LMS Early Career Fellowship (07/2021 - 09/2021), and by the Heilbronn Institute for Mathematical Research (10/2021 - present). SC was supported by EPSRC Fellowship Grant EP/S00226X/2, and by the Swedish Research Council under grant no. 2016-06596. 

\section{Necessary conditions}\label{sec2}

In this section, we establish the necessary conditions for partition and density regularity. In particular, we prove the `only if' directions of Theorem \ref{thm1.1} and Theorem \ref{thm1.3}. We begin by noting that the necessary conditions for equations (\ref{eqn1.2}) and (\ref{eqn1.5}) to be partition or density regular are the same as those for linear homogeneous equations.

\begin{prop}\label{prop2.1}
Let $s\in\bN$. Let $a_{1},\ldots,a_{s}$ be non-zero integers, and let $P$ be an integer polynomial of positive degree.
\begin{enumerate}[\upshape(I)]
    \item\label{itemPR} If the equation (\ref{eqn1.2}) is partition regular, 
    then there exists a non-empty set $I\subseteq[s]$ such that $\sum_{i\in I}a_{i}=0$.
    \item\label{itemDR} If the equation (\ref{eqn1.2}) is density regular, then $a_{1} + \cdots + a_{s}=0$.
\end{enumerate}
\end{prop}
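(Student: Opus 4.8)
The plan is to reduce both statements to the corresponding facts for linear homogeneous equations, which are classical consequences of Rado's and Roth's theorems. The key observation is that if the polynomial equation \eqref{eqn1.2} is partition (resp.\ density) regular, then in particular the linear homogeneous equation \eqref{eqn1.1} in the same coefficients must be partition (resp.\ density) regular, and the desired conditions then follow from the known necessary conditions for \eqref{eqn1.1}. The mechanism for this reduction is to restrict attention to colourings (resp.\ dense sets) that live inside the image set $P(\bN)$, or more precisely to pull colourings back along $P$.

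For part \eqref{itemPR}, I would argue by contraposition: suppose no non-empty $I\subseteq[s]$ satisfies $\sum_{i\in I}a_i=0$. By Rado's criterion \cite[Satz IV]{Rad1933}, the linear equation \eqref{eqn1.1} is then \emph{not} partition regular, so there is a finite colouring $\bN=D_1\cup\cdots\cup D_r$ admitting no non-constant monochromatic solution to \eqref{eqn1.1}. Pull this colouring back to a colouring of $\bN$ via $P$: colour $x\in\bN$ by the colour of $P(x)$ (note $P$ takes positive values on a cofinite subset of $\bN$, since it has positive degree and positive leading coefficient after possibly replacing $P$ by $-P$, which does not affect \eqref{eqn1.2}; the finitely many remaining integers can be placed in an extra colour class). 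If $(x_1,\ldots,x_s)$ were a non-constant monochromatic solution to \eqref{eqn1.2} in this pulled-back colouring, then $(P(x_1),\ldots,P(x_s))$ would be a monochromatic solution to \eqref{eqn1.1} in the colouring $D_1,\ldots,D_r$; and it would be non-constant, because a non-constant solution to \eqref{eqn1.2} must have $x_i\neq x_j$ for some $i\ne j$, and then (after discarding the finitely many indices where $P$ is not injective, or more carefully, since $P$ is eventually strictly monotone) $P(x_i)\ne P(x_j)$. This contradicts the choice of $D_1,\ldots,D_r$. Hence such an $I$ must exist.

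For part \eqref{itemDR}, I would similarly suppose \eqref{eqn1.2} is density regular and deduce $a_1+\cdots+a_s=0$. Here the cleanest route is: if $a_1+\cdots+a_s\neq 0$, exhibit a set of positive upper density with no non-constant solution to \eqref{eqn1.2}. Taking a prime $p>\abs{a_1+\cdots+a_s}$ and a residue $c$ with $c\not\equiv 0\pmod p$, the set $A=\{x\in\bN: P(x)\equiv c\pmod{p}\}$ has positive density (it is a union of residue classes modulo $p$, and is non-empty for suitable $c$ as $P$ is non-constant), and any solution to \eqref{eqn1.2} with all $x_i\in A$ forces $c(a_1+\cdots+a_s)\equiv 0\pmod p$, a contradiction. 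Alternatively one can again pull back along $P$: if $B\subseteq\bN$ has positive upper density and contains no non-constant solution to \eqref{eqn1.1}, then $P^{-1}(B)$ has positive upper density (since $P$ is eventually monotone, preimages of density-positive sets have density bounded below) and contains no non-constant solution to \eqref{eqn1.2}.

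The only genuinely delicate point — and the main obstacle — is handling the non-constancy: one must make sure that the monochromatic (resp.\ dense) solution produced downstairs, in the variables $P(x_i)$, remains genuinely non-constant, and conversely that a non-constant solution in the $x_i$ does not collapse to a constant one in the $P(x_i)$. This is where the positivity of the degree is used: $P$ is injective outside a finite set, so $x_i\ne x_j$ implies $P(x_i)\ne P(x_j)$ for all but finitely many pairs, which can be absorbed into an auxiliary colour class or a finite modification of the dense set without affecting density. Everything else is a direct appeal to Rado's criterion and to Roth's theorem (equivalently, to the elementary mod-$p$ obstruction for the density case).
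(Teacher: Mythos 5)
Your strategy closely parallels the paper's --- both reduce to the linear case by pulling a colouring back along $P$ and invoking Rado's criterion, and both establish density regularity by a mod-$p$ obstruction --- but your execution of part (\ref{itemPR}) has a genuine gap: placing all the ``bad'' integers in a \emph{single} extra colour class does not rule out a non-constant monochromatic solution lying entirely inside that class. Concretely, take $s=2$, $a_1=1$, $a_2=2$, and $P(x)=(x-1)(x-2)$. No non-empty $I\subseteq\{1,2\}$ has $\sum_{i\in I}a_i=0$, yet $(x_1,x_2)=(1,2)$ is a non-constant solution to (\ref{eqn1.2}) with $P(1)=P(2)=0$, so both entries land in your extra class and the solution is monochromatic there; the pulled-back colouring therefore fails to witness the non-partition-regularity you need, and the contrapositive argument breaks down. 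A second, related issue is that your extra class (where $P(x)\leqslant 0$) need not contain the whole non-injectivity region of $P$, so distinct $x_i,x_j$ with $P(x_i)=P(x_j)>0$ would share a colour yet map to a constant tuple downstairs. The paper avoids both problems at once: choose $M$ so that $P$ is strictly increasing and positive on $\{M,M+1,\ldots\}$, and give \emph{each} element of $[M-1]$ its own unique colour, so that monochromatic solutions inside $[M-1]$ are forced to be constant and those above $M$ push forward injectively to non-constant solutions of (\ref{eqn1.1}).

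Part (\ref{itemDR}) matches the paper's argument in essence, but ``$A=\{x:P(x)\equiv c\pmod{p}\}$ is non-empty for suitable $c$ as $P$ is non-constant'' glosses over the point that $P$ non-constant over $\bZ$ does not by itself produce a non-zero value of $P$ modulo $p$: you need $p$ large enough that $P$ remains non-constant modulo $p$. The paper sidesteps this by fixing $t$ with $P(t)\neq 0$ first, then choosing a large prime $p\nmid P(t)$ and using the single residue class $\{n:n\equiv t\pmod{p}\}$. Your alternative pullback argument for density would inherit the same injectivity caveats noted above.
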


\begin{proof}
First suppose that (\ref{eqn1.2}) is partition regular. By replacing each $a_{i}$ with $-a_{i}$, we may assume that the leading coefficient of $P$ is positive. Thus, we can find $M\in\N$ such that the restriction of $P$ to the set $\{M,M+1,\ldots\}$ defines a strictly increasing function with image $S=\{P(M),P(M+1),\ldots\}\subseteq\N$. 

By Rado's criterion \cite[Satz IV]{Rad1933}, the conclusion of (\ref{itemPR}) holds if and only if the underlying linear equation $a_{1}x_{1} + \ldots + a_{s}x_{s}=0$ is partition regular. We establish the latter by using a trick of Lefmann \cite[Theorem 2.1]{Lef1991}. Suppose that we have a finite colouring $S=\cC_{1}\cup\cdots\cup \cC_{r}$. By our choice of $M$, this induces a finite colouring $\{M,M+1,\ldots\}=\cC_{1}'\cup\cdots\cup \cC_{r}'$ with $\cC_{i}':=\{x\geqslant M: P(x)\in \cC_{i}\}$. By considering a colouring where each element of $[M-1]$ receives a unique colour, partition regularity guarantees that (\ref{eqn1.2}) admits monochromatic solutions with respect to any finite colouring of the set $\{M,M+1,\ldots\}$. We can therefore find $i\in[r]$ such that (\ref{eqn1.2}) has a solution over $\cC_{i}'$, whence $a_{1}x_{1} + \cdots + a_{s}x_{s}=0$ has a solution over $\cC_{i}$. We have therefore proven that $a_{1}x_{1} + \cdots + a_{s}x_{s}=0$ is partition regular, as required.

Now suppose that (\ref{eqn1.2}) is density regular. Let $t\in\{0,1,\ldots,\deg(P)\}$ be such that $P(t)\neq 0$, and let $p$ be a prime satisfying $p>\deg(P) + |a_{1}| + \cdots + |a_{s}|$ and $p\nmid P(t)$. Since (\ref{eqn1.2}) is density regular, it has a solution over the set $\{n\in\N: n\equiv t \mmod{p}\}$. Thus, by reducing (\ref{eqn1.2}) modulo $p$, we observe that $p\mid (a_{1}+\cdots+a_{s})$. Our hypothesis on the size of $p$ therefore delivers the conclusion $a_{1} + \cdots + a_{s}=0$.
\end{proof}

\begin{prop}\label{prop2.2}
Let $s\in\N$, and let $P\in\Z[X]$ have positive degree. Let $a_{1},\ldots,a_{s}\in\Z\setminus\{0\}$ and $b\in\Z$. If the equation (\ref{eqn1.5}) is partition regular, then $b=(a_{1}+\cdots+a_{s})m$ for some $m\in\Z$ such that $P(X)-m$ is an intersective polynomial.
Furthermore, if (\ref{eqn1.5}) is density regular, then $b=a_{1}+\cdots+a_{s}=0$.
\end{prop}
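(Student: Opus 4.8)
The plan is to extract, from partition regularity and from density regularity respectively, enough congruence information — via carefully chosen finite colourings and positive-density sets — to pin down $b$ and to verify intersectivity of $P-m$.

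For the partition regularity assertion, write $A:=a_1+\cdots+a_s$. The basic input is that for every prime power $p^K$ the colouring of $\bN$ by residue classes modulo $p^K$ is finite, so partition regularity furnishes a monochromatic solution $(x_1,\ldots,x_s)$; all the $x_i$ lie in a single class $c$ modulo $p^K$, and since $x\equiv c\pmod{p^K}$ forces $P(x)\equiv P(c)\pmod{p^K}$, we obtain
\[
A\,P(c)\equiv b \pmod{p^K}.
\]
Taking $K$ large and comparing $p$-adic valuations on the two sides of $A\,P(c)\equiv b$ shows $\ord_p(b)\ge\ord_p(A)$ for every prime $p$, hence $A\mid b$. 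The degenerate case $A=0$ is immediate, since then $b\equiv 0\pmod{p^K}$ for all $p^K$ forces $b=0$, and one may take $m=P(0)$, so that $P(X)-m$ has the integer root $0$ and is therefore intersective, while $b=0=Am$. When $A\ne 0$ we set $m:=b/A\in\bZ$ and check intersectivity of $P-m$ prime by prime: given $n\in\bN$, factor $n=\prod_i p_i^{k_i}$ and apply the displayed congruence with $p=p_i$ and $K=k_i+\ord_{p_i}(A)$; since $\ord_{p_i}\!\big(A(P(c)-m)\big)=\ord_{p_i}(A)+\ord_{p_i}(P(c)-m)\ge K$, this yields $c_i$ with $p_i^{k_i}\mid P(c_i)-m$, and the Chinese Remainder Theorem combines the $c_i$ into an $x$ with $n\mid P(x)-m$. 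This is exactly the required conclusion.

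For the density regularity assertion I would instead feed in the sets $A_t:=\{n\in\bN:n\equiv t\pmod p\}$, which have positive density $1/p$; density regularity supplies a non-constant solution inside $A_t$, and reducing modulo $p$ gives $A\,P(t)\equiv b\pmod p$. Since this holds for every residue $t$, the one-variable polynomial $A\,P(X)-b$ vanishes identically on $\bF_p$. If $A\ne 0$ then this polynomial has degree $\deg P\ge 1$, so taking $p>\deg P$ forces it to be the zero polynomial over $\bF_p$; in particular $p$ divides $A$ times the leading coefficient of $P$, which is non-zero — impossible once $p$ is also chosen larger than this quantity. Hence $A=0$, and then $b\equiv 0\pmod p$ for all large $p$, so $b=0$.

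The argument is elementary and I do not anticipate a genuine obstacle; the places that need care are (i) working with $P-b/A$ rather than $P$, and choosing the modulus $p^K$ large enough to absorb the factor $\ord_p(A)$, in the intersectivity step; (ii) treating $A=0$ separately, where $b=Am$ does not determine $m$ and one falls back on $m=P(0)$; and (iii) checking that $n\mapsto n\bmod p^K$ is a bona fide finite colouring of $\bN$ and that each $A_t$ genuinely has positive upper density.
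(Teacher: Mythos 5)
Your argument is correct and runs along the same lines as the paper's: for partition regularity, colour by residue classes modulo prime powers, reduce the equation to $A\,P(c)\equiv b\pmod{p^K}$ with $A=a_1+\cdots+a_s$, deduce $A\mid b$ by a $p$-adic valuation comparison, and then extract intersectivity of $P-m$ directly from the same congruences, treating $A=0$ separately. The one minor divergence is the density-regularity half, where you argue self-containedly that $A\,P(X)-b$ vanishes identically on $\bF_p$ for all large primes $p$ (forcing $A=0$ and then $b=0$), whereas the paper instead reduces to the partition-regularity case and invokes Proposition~\ref{prop2.1}; both routes are valid and of essentially the same difficulty.
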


\begin{proof}
Suppose that the equation (\ref{eqn1.5}) is partition regular.
Note that for any $q\in\N$, by partitioning $\N$ into distinct residue classes modulo $q$, the partition regularity of (\ref{eqn1.5}) implies that $b\equiv (a_{1}+\cdots+a_{s})m_{q}\pmod{q}$ for some $m_{q}\in\Z$. In particular, we see that every integer divisor of $(a_{1}+\cdots+a_{s})$ must also divide $b$, whence $b=(a_{1}+\cdots+a_{s})m$ for some $m\in\Z$.

Now observe that if $a_{1}+\cdots+a_{s}=0$, then $b=0$ and we could take $m$ to be any integer. In particular, we could chose $m=P(0)$ so that $P(X)-m$ is trivially intersective. 
Suppose then that $a_{1}+\cdots+a_{s}\neq 0$, whence the integer $m=b(a_{1}+\cdots+a_{s})^{-1}$ is uniquely defined.

Assume for a contradiction that $P(X)-m$ is not intersective. By the Chinese remainder theorem, we can find a prime $p$ and a positive integer $k$ such that $P(x)\equiv m \mmod{p^{k}}$ has no integer solutions $x$. Now choose $h\in\N$ such that $p^{h}\nmid(a_{1}+\cdots+a_{s})$. It follows that there does not exist $x\in\Z$ satisfying the congruence
\begin{equation*}
    a_{1}P(x) + \cdots + a_{s}P(x) \equiv b\mmod{p^{h+k}}.
\end{equation*}
Hence, there are no monochromatic solutions to (\ref{eqn1.5}) with respect to the finite colouring given by partitioning $\N$ into distinct residue classes modulo $p^{h+k}$. This contradicts the assumption that (\ref{eqn1.5}) is partition regular, so $P(X)-m$ must be intersective.

Finally, suppose that (\ref{eqn1.5}) is density regular. Since density regularity implies partition regularity, we deduce that $b=(a_1 + \cdots +a_s)m$ for some integer $m$ such that $P(X) - m$ is an intersective polynomial. Subtracting $b$ from both sides therefore reveals that (\ref{eqn1.5}) can be rewritten as
\begin{equation}\label{eqn2.1}
    a_{1}(P(x_1)-m) + \cdots + a_s(P(x_s)-m) = 0.
\end{equation}
The conclusion that $b=a_1+\cdots +a_s = 0$ now follows from Proposition \ref{prop2.1}.
\end{proof}

\begin{remark}
Observe that none of the results in this section make any assumptions on the number of variables $s$. The condition $s\geqslant s_1(d)$ introduced in Theorem \ref{thm1.1} is only used to find solutions to our equations, that is, to obtain sufficient conditions for partition or density regularity.
\end{remark}

With these necessary conditions established, we close this section by noting that Theorem \ref{thm1.4} implies Theorem \ref{thm1.1}, and Theorem \ref{thm1.1} implies Theorem \ref{thm1.3}.

\begin{proof}[Proof of Theorem \ref{thm1.1} given Theorem \ref{thm1.4}]
The `only if' parts of Theorem \ref{thm1.1} may be inferred from Proposition \ref{prop2.2}, whilst the remaining `if' statements follow from Theorem \ref{thm1.4}.
\end{proof}

\begin{proof}[Proof of Theorem \ref{thm1.3} given Theorem \ref{thm1.1}]
The `only if' parts of Theorem \ref{thm1.3} follow from Proposition \ref{prop2.2}. Similarly, the (DR) statement of Theorem \ref{thm1.3} follows immediately from Theorem \ref{thm1.1} and Proposition \ref{prop2.2}. Finally, it remains to show that, under the hypotheses of Theorem \ref{thm1.3}, if $b=(a_1 + \cdots +a_s)m$ for some integer $m$ such that $P(X) - m$ is an intersective polynomial, then (\ref{eqn1.5}) is partition regular. Rewriting
(\ref{eqn1.5}) as (\ref{eqn2.1}), the desired result now follows from Theorem \ref{thm1.1}.
\end{proof}

\section{Linear form equations}\label{sec3}

As we have verified the necessary conditions for partition and density regularity, our focus is now on obtaining solutions to (\ref{eqn1.2}) under the assumption that Rado's condition holds, meaning that there exists $I\subseteq[s]$ with $I\neq\emptyset$ such that $\sum_{i\in I}a_i = 0$. This condition allows us to rewrite (\ref{eqn1.2}) as
\begin{equation*}
    \sum_{i\in I}a_i P(x_{i}) = \sum_{j\in[s]\setminus I}(-a_j) P(x_j).
\end{equation*}
The above equation takes the shape
\begin{equation}\label{eqn3.1}
    L_{1}(P(\bx)) = L_2(P(\by))
\end{equation}
for some linear forms $L_1$ and $L_2$. 
We refer to a linear form $L(\bx)=b_1 x_1 + \cdots + b_t x_t$ in $t$ variables as \emph{non-degenerate} if $b_j\neq 0$ for all $j\in[t]$, and we write $\gcd(L):=\gcd(b_1,\ldots,b_t)$.

\begin{remark}
Note that in the above paragraph we could have $I=[s]$. We follow the convention that if we have an equation involving two linear forms $L_2$ where one of the forms has $t=0$ variables, then we replace $L_2$ with $0$. In particular, in this situation, the equation (\ref{eqn3.1}) takes the form
\begin{equation*}
    L_1(P(\bx)) = 0.
\end{equation*}
\end{remark}

To proceed further with our study of equations of the form (\ref{eqn3.1}), we require some notation.
Let  $T = T(d) \in \bN$ be minimal such that, for every integer polynomial $P$ of degree $d$, the equation
\begin{equation} \label{eqn3.2}
P(x_1) + \cdots + P(x_T) = P(x_{T+1}) + \cdots + P(x_{2T})
\end{equation}
has $O_{P}(X^{2T - d + \eps})$ solutions $\bx \in [X]^{2T}$, and let 
\begin{equation} \label{eqn3.3}
s_0(d) = 2T(d) + 1.
\end{equation}
The proof of \cite[Corollary 14.7]{Woo2019} yields
\[
T(d) \le \frac{d(d-1)}2 + \lfloor \sqrt{2d+2} \rfloor,
\]
and it follows from Hua's lemma \cite[Equation (1)]{Hua1938} that $T(2) \le 2$ and $T(3) \le 4$.
Hence
\[
s_0(d) \le s_1(d),
\]
where $s_1(d)$ is as in (\ref{eqn1.3}).
Moreover, by considering solutions with $x_{i}=x_{i+T}$ for all $i\in[T]$, we have 
\[
T(d) \geqslant d,
\qquad
s_0(d) \ge 2d + 1.
\]

By orthogonality, our definition of $T=T(d)$ is equivalent to the statement that
\begin{equation}\label{eqn3.4}
    \int_{\T}\left\lvert \sum_{x\leqslant X}e(\alpha P(x))\right\rvert^{2T}\ll_{P} X^{2T-d+\eps}
\end{equation}
holds for any integer polynomial $P$ of degree $d$. We now use this observation to bound the number of trivial solutions to (\ref{eqn1.2}) and (\ref{eqn1.5}).

\begin{lemma} \label{lem3.2} Let $d,s,X\in\N$ with $s\geqslant 2$, and let $P$ be an integer polynomial with degree $d$. Let $a_1,\ldots,a_s,b,c$ be fixed integers, and let $j,k \in [s]$ with $j \ne k$. If $s\geqslant s_{0}(d)$, then
\[
\# \{ \bx \in [X]^s: a_1 P(x_1) + \cdots + a_s P(x_s) = b,
\quad x_j = c \} \ll_P X^{s-d-1+\eps}
\]
and
\[
\# \{ \bx \in [X]^s: a_1 P(x_1) + \cdots + a_s P(x_s) = b,
\quad x_j = x_k \} \ll_P X^{s-d-1+\eps+d/(s-1)}.
\]
\end{lemma}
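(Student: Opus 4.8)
The plan is to derive both estimates from the Hardy--Littlewood circle method, using the mean value bound (\ref{eqn3.4}) together with Hölder's inequality. The starting observation is that if $c$ is a non-zero integer then $cP$ is again an integer polynomial of degree $d$, and, since $\alpha \mapsto \bigl|\sum_{x \le X} e(\alpha P(x))\bigr|^{2T}$ is $1$-periodic, the change of variables $\beta = c\alpha$ gives
\begin{equation*}
\int_{\T} \Bigl| \sum_{x \le X} e(\alpha c P(x)) \Bigr|^{2T} d\alpha = \int_{\T} \Bigl| \sum_{x \le X} e(\beta P(x)) \Bigr|^{2T} d\beta \ll_P X^{2T-d+\eps},
\end{equation*}
so (\ref{eqn3.4}) applies to each of the polynomials $a_i P$ (and to $(a_j+a_k)P$ when $a_j+a_k\neq 0$) with an implied constant depending only on $P$.

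For the first estimate I would fix $x_j = c$, turning the equation into $\sum_{i \ne j} a_i P(x_i) = b - a_j P(c)$, an equation in the $s-1$ variables $(x_i)_{i \ne j}$. By orthogonality its number of solutions in $[X]^{s-1}$ is at most $\int_{\T} \prod_{i \ne j} \bigl| \sum_{x \le X} e(\alpha a_i P(x)) \bigr| \, d\alpha$. Since $s \ge s_0(d) = 2T+1$ we have $s - 1 \ge 2T$, so I can bound $s-1-2T$ of the factors trivially by $X$ and apply Hölder's inequality to the remaining $2T$ factors, each raised to the power $2T$; the displayed mean value bound then gives $\ll_P X^{s-1-2T} \cdot X^{2T-d+\eps} = X^{s-d-1+\eps}$, as required. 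This part is entirely routine: more generally, the same argument shows that a non-degenerate equation $\sum_{i=1}^m c_i P(x_i) = B$ with $m \ge 2T$ has $\ll_P X^{m-d+\eps}$ solutions in $[X]^m$.

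For the second estimate I would set $x_j = x_k = y$, so that the equation becomes $(a_j + a_k)P(y) + \sum_{i \notin \{j,k\}} a_i P(x_i) = b$. If $a_j + a_k \ne 0$ this is again a non-degenerate equation in $s-1$ variables, and the argument above produces $\ll_P X^{s-d-1+\eps}$, comfortably within the claimed bound. The genuinely delicate case is $a_j + a_k = 0$: then $y$ drops out of the equation and ranges freely over $[X]$, so the count is $X$ times the number of $(x_i)_{i \notin \{j,k\}} \in [X]^{s-2}$ satisfying $\sum_{i \notin \{j,k\}} a_i P(x_i) = b$. When $s - 2 \ge 2T$ this inner count is $\ll_P X^{s-2-d+\eps}$ by the previous paragraph, which is more than enough; the only remaining possibility, given $s \ge 2T+1$, is $s - 2 = 2T - 1$, i.e. $s = s_0(d)$. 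Here I cannot split into $2T$ full factors, so instead I would bound the inner count by $\int_{\T} \prod_{i \notin \{j,k\}} \bigl| \sum_{x \le X} e(\alpha a_i P(x)) \bigr| \, d\alpha$, apply Hölder with all $s-2$ exponents equal, and interpolate between the $(s-2)$-th and $2T$-th moments by Hölder's inequality once more:
\begin{equation*}
\int_{\T} \Bigl| \sum_{x \le X} e(\alpha a_i P(x)) \Bigr|^{s-2} d\alpha \le \Bigl( \int_{\T} \Bigl| \sum_{x \le X} e(\alpha a_i P(x)) \Bigr|^{2T} d\alpha \Bigr)^{(s-2)/(2T)} \ll_P X^{(s-2)(2T-d+\eps)/(2T)}.
\end{equation*}
With $2T = s-1$ the exponent equals $s - 2 - d + d/(s-1) + \eps$, and multiplying by the free factor $X$ from $y$ yields $X^{s-d-1+\eps+d/(s-1)}$, as claimed. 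This threshold case — where collapsing $x_j=x_k$ with $a_j+a_k=0$ costs two effective variables and leaves only $2T-1$ of them, one short of the available moment — is the sole obstacle; the extra factor $X^{d/(s-1)}$ relative to the first estimate is precisely the cost of that interpolation step.
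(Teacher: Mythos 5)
Your proposal is correct and rests on the same high-level strategy as the paper: orthogonality, H\"older's inequality, and the $2T$-th moment bound \eqref{eqn3.4}. The difference is in the implementation, and chiefly in the second estimate. The paper keeps the collapsed factor $f((a_j+a_k)\alp) := \sum_{x\le X}e((a_j+a_k)\alp P(x))$ inside a single H\"older application with exponent $s-1$, using the trivial bound $\int_\T |f((a_j+a_k)\alp)|^{s-1}\,\d\alp \le X^{s-1}$; this yields a contribution of $X$ from that factor whether or not $a_j+a_k$ vanishes, and the remaining $s-2$ factors each give $(X^{s-1-d+\eps})^{1/(s-1)}$, producing the stated exponent in one line. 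You instead split on $a_j+a_k=0$ versus $a_j+a_k\neq 0$, and in the zero case pull the free variable $y$ out as a factor of $X$, leaving an $(s-2)$-variable problem; at the boundary $s=s_0(d)$ (so $s-2=2T-1$) you supplement H\"older with a moment-interpolation step $\int_\T|g|^{s-2}\le(\int_\T|g|^{2T})^{(s-2)/(2T)}$. Both routes are valid, and yours actually gives the slightly sharper bound $X^{s-d-1+\eps}$ when $a_j+a_k=0$ and $s>s_0(d)$, but the paper's unified choice of H\"older exponent $s-1$ avoids the case analysis and the extra interpolation entirely. Your observation that \eqref{eqn3.4} applies to $a_iP$ with a $P$-dependent constant, via periodicity and a change of variable, is the same one the paper uses implicitly when writing $f(a_i\alp)$.
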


\begin{proof} For $\alp \in \bT$, write $f(\alp) = \sum_{x \le X} e(\alp P(x))$. By orthogonality, H\"older's inequality, and (\ref{eqn3.4}), we have
\begin{align*}
&
\#\{ \bx \in [X]^s: a_1 P(x_1) + \cdots + a_s P(x_s) = b,
\quad x_j = c \} \\
&\qquad =\int_\bT \left( \prod_{i \in [s] \setminus \{j \}} f(a_i \alp) \right) e(\alp(a_j  P(c) - b)) \d \alp 
\le \prod_{i \in [s] \setminus \{j \}} \left( \int_\bT |f(a_i \alp)|^{s-1} \right)^{1/(s-1)}
\\ & \qquad \le \prod_{i \in [s] \setminus \{j \}} 
\left( X^{s-1-2T} \int_\bT |f(a_i \alp)|^{2T} \right)^{1/(s-1)}
\ll_P X^{s-1-2T}X^{2T - d + \eps}
= X^{s-d-1+\eps}.
\end{align*}
Similarly, irrespective of whether $a_j + a_k$ vanishes, we have
\begin{align*}
&\# \{ \bx \in [X]^s: a_1 P(x_1) + \cdots + a_s P(x_s) = b,
\quad x_j = x_k \} 
\\ &\qquad = \int_\bT \left(
\prod_{i \in [s] \setminus \{j,k\}} f(a_i \alp) \right) f((a_j+a_k)\alp) e(-b\alp) \d \alp \\
& \qquad \le
\left( \prod_{i \in [s] \setminus \{j,k \}}  \int_\bT |f(a_i \alp)|^{s-1} \right)^{1/(s-1)}
\left( \int_\bT |f((a_j+a_k)\alp)|^{s-1} \d \alp \right)^{1/(s-1)} \\
&\qquad \ll_P
(X^{s-1-2T}X^{2T-d+\eps})^{(s-2)/(s-1)}X
\le X^{s - d - 1 + \eps + d/(s-1)}.
\end{align*}
\end{proof}

\begin{remark}
In our applications of Lemma \ref{lem3.2}, the quantity $X$ is chosen to be sufficiently large relative to $P$. Consequently, we can take $X^\eps$ sufficiently large such that the dependence on $P$ of the implicit constants can be removed.
\end{remark}

Let $P$ be an intersective integer polynomial of degree $d \ge 2$. To simplify our forthcoming arguments, we first restrict our attention to polynomials $P$ which are strictly monotone increasing and positive on the real interval $[1,\infty)$. That is, we assume $P$ satisfies
\begin{equation}
\label{eqn3.5}
  1\leqslant P(x) < P(y) \qquad(x,y\in\bR, \; 1\leqslant x<y).
\end{equation}
To prove the main theorems stated in the introduction, we prove the following counting result for equations of the form (\ref{eqn3.1}) where the $y_j$ variables lie in a particular colour class and the $x_i$ variables are drawn from an arbitrary dense set.

\begin{thm} \label{thm3.4}
Let $r$ and $d\geqslant 2$ be positive integers, and let $0<\delta<1$ be a real number. 
Let $P$ be an intersective integer polynomial of degree $d$ which satisfies (\ref{eqn3.5}).
Let $s \ge 1$ and $t \ge 0$ be integers such that $s + t \ge s_0(d)$. Let
\[
L_1(\bx) \in \bZ[x_1,\ldots,x_s], \qquad L_2(\by) \in \bZ[y_1,\ldots,y_t] 
\]
be non-degenerate linear forms such that $L_1(1,\ldots,1) = 0$. Let $X \in \bN$ be sufficiently large, and suppose $[X] = \cC_1 \cup \cdots \cup \cC_r$.
Then there exists $k \in [r]$
with $|\cC_k|\gg_{\delta,r,L_{1},L_2,P} X$
such that the following is true. For all $A\subseteq[X]$ with $|A|\geqslant\delta X$, we have
\begin{equation}\label{eqn3.6}
\# \{ (\bx, \by) \in A^s \times \cC_k^t: L_1(P(\bx)) = L_2(P(\by)) \} \gg X^{s+t-d}.
\end{equation}
The implied constant may depend on $L_1, L_2, P, r, \del$.
\end{thm}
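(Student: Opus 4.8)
The plan is to prove Theorem~\ref{thm3.4} via a transference principle combined with the arithmetic regularity lemma, following the strategy outlined in the introduction. I will first reduce the colouring problem to a density problem on a $W$-tricked set, then establish pseudorandomness of a suitable majorant, and finally count solutions in the model (bounded) setting.

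\textbf{Step 1: $W$-trick and choice of colour class.} Since $P$ is intersective and satisfies (\ref{eqn3.5}), I would fix a smoothness parameter $w$ (growing slowly with $X$), set $W = \prod_{p \le w} p^{k_p}$ for appropriate exponents, and pass to a $W$-tricked image set $\cS_W = \{ (P(x) - P(b))/W_2 : x \equiv b \mmod{W_1}\}$ as in \S\ref{sec4}, where the elements are now equidistributed in residue classes modulo small primes. Because $L_1(1,\ldots,1) = 0$, the linear form $L_1$ is translation-invariant in its arguments, so substituting $x_i \mapsto (P(x_i) - P(b))/W_2$ respects the equation structure after clearing denominators. The key idea for the colouring aspect (which has no homogeneity to exploit, unlike \cite{CLP2021}) is to choose $k \in [r]$ so that $\cC_k$ has the largest intersection with an appropriate polynomial Bohr set; pigeonholing over the $r$ colours and using equidistribution properties of $P$ on arithmetic progressions gives $|\cC_k| \gg_{\delta,r,L_1,L_2,P} X$ and, more importantly, a positive-density fibre of $\cC_k$ inside the relevant residue/Bohr structure.

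\textbf{Step 2: Pseudorandom majorant and transference.} Next I would invoke the machinery of \S\S\ref{sec4}--\ref{sec7}: construct a majorant $\nu$ of (the $W$-tricked, normalised) indicator of $\cS_W$, verify via the Hardy--Littlewood circle method that $\nu$ satisfies the required Fourier decay estimate (minor-arc bounds on exponential sums $\sum_{y \le q} e_q(aP_D(y))$, using Lucier's bound on $\gcd$ of coefficients of $P_D(X) - P_D(0)$ to get uniformity in $D$) and the restriction estimates, and then apply the transference principle. This replaces the sparse set $\cC_k$ (inside the image of $P$) by a dense model set inside $[M]$ for $M \approx X^{1/d}$ (up to $W$-factors), at the cost of losing a small constant. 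The $A^s$ variables live at scale $X$ and require no transference—they are already dense in $[X]$. The upshot is that it suffices to count solutions $(\bx, \by) \in A^s \times B^t$ to $L_1(P(\bx)) = L_2(P(\by))$ where $B$ is a genuinely dense subset of an arithmetic progression.

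\textbf{Step 3: Counting in the dense setting via the arithmetic regularity lemma.} Finally, with the sparse colour class replaced by a dense set, I would run the argument of \cite{Cha2022,Pre2021}: apply Green's arithmetic regularity lemma to the relevant indicator functions, decompose into structured (nilsequence/almost-periodic) plus uniform plus small-error parts, discard the uniform part using the Fourier/Gowers control afforded by the majorant together with the condition $s + t \ge s_0(d)$ (which guarantees enough variables for the relevant moment estimate (\ref{eqn3.4}) to kill the uniform contribution), and evaluate the structured main term, which is bounded below by $\gg X^{s+t-d}$ because the local (archimedean and $p$-adic) densities are positive—here intersectivity of $P$ is exactly what ensures the $p$-adic densities do not vanish. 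Lemma~\ref{lem3.2} is used to check that the main term genuinely reflects non-degenerate behaviour, but (\ref{eqn3.6}) as stated does not exclude diagonal solutions, so this is only needed later when deducing Theorem~\ref{thm1.4}.

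\textbf{Main obstacle.} The hard part will be Step~2: establishing the restriction and Fourier-decay estimates for $\nu$ \emph{uniformly in the auxiliary parameter $D$}, since the coefficients of the auxiliary intersective polynomial $P_D$ grow with $D$ while we need bounds depending only on $P$. Controlling the relevant complete exponential sums $\sum_{y \le q} e_q(a P_D(y))$ and the associated singular series uniformly—via Lucier's insight that $\gcd(P_D(X) - P_D(0))$ is bounded in terms of $P$ alone—is the technical crux, and it is what allows the circle-method input to feed cleanly into the transference principle. A secondary difficulty is handling the inhomogeneity and the colouring simultaneously: without homogeneous sets available, the polynomial Bohr set selection in Step~1 must be set up carefully so that the chosen colour class is dense on a structured set compatible with both linear forms $L_1$ and $L_2$.
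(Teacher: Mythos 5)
Your proposal assembles the right list of ingredients (the $W$-trick, a pseudorandom majorant built by the circle method with Lucier's bound on $\gcd(P_D-P_D(0))$, a transference principle, the arithmetic regularity lemma, and polynomial Bohr sets for the colouring), but it wires them together the wrong way round, and the resulting reduction is circular. In the paper's argument the transference is applied to the $A$-side, not to the colour class: one substitutes $n_i=(P(x_i)-P(b))/\lambda(D)$ for $x_i\in A_{b,W\kappa}$, so that the equation becomes \emph{linear} in $\bn$ and the $n_i$ range over a \emph{sparse} subset $\cA$ of $[N]=P_D(Z)$; it is this sparse set, weighted by $\nu$, that is replaced by a dense model $g$ on $[N]$ via the dense model lemma and the restriction estimates. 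The colour class is never transferred: the $y_j$ are written as $r_D+Dz_j$ with $z_j$ in the induced colour class $\tilde\cC_k\subseteq[Z]$, which is dense in $[Z]$, and the polynomial $P_D(z_j)$ is retained so that the colouring can be exploited through the polynomial Bohr set $\bohr_{P_D}(\btheta,\rho)$ inside the regularity argument. Your Step 2 inverts this: you transfer $\cC_k$ and declare that "the $A^s$ variables require no transference", which contradicts your own Step 1 (where you correctly substitute $x_i\mapsto(P(x_i)-P(b))/W_2$, producing a sparse set that manifestly does need transference). The "upshot" you state --- that it suffices to count solutions of $L_1(P(\bx))=L_2(P(\by))$ with $\bx\in A^s$ and $\by\in B^t$ for dense $A$ and $B$ --- is not a reduction at all: that is essentially the density statement of Theorem \ref{thm1.4}, still polynomial in every variable, and the degree-$1$ arithmetic regularity lemma you invoke in Step 3 cannot count it. The linearisation of the $A$-side is precisely what makes the regularity lemma applicable, since the structured part $f_{\str}$ is almost-periodic along the polynomial Bohr set in the $\bz$ variables while the $\bn$ variables satisfy a translation-invariant \emph{linear} equation (this is where $L_1(1,\ldots,1)=0$ enters, via the Frankl--Graham--R\"odl input).

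A second, related gap is the order of quantifiers. Theorem \ref{thm3.4} requires a single $k$ chosen \emph{before} the dense set $A$ is revealed, yet the dense model of $\nu 1_{\cA}$ (and hence the set fed into the counting theorem) depends on $A$. The paper resolves this by proving the linearised counting result (Theorem \ref{thm8.1}) for $r$ dense sets simultaneously and upgrading it to "for all dense sets" via the finite-choice Lemma \ref{lem8.2}, so that Theorem \ref{thm3.8} supplies one $k$ valid for every $\cA\subseteq[N]$ of density $\tilde\delta$; the proof of Theorem \ref{thm3.4} then applies this with $\cA$ equal to the dense model of the particular $A$ at hand. Your proposal selects $k$ by a Bohr-set pigeonhole in Step 1 but never explains why that single $k$ survives for every subsequent choice of $A$; with the transference moved to the colour-class side this issue becomes unresolvable, since the object being transferred would itself depend on the colouring and the Bohr frequency $\btheta$ depends on the regularity decomposition of the $A$-side data.
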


\subsection{Deducing Theorem \ref{thm1.4}}

Having introduced Theorem \ref{thm3.4}, we now show how it can be used to prove Theorem \ref{thm1.4}.
Given a polynomial $P$ satisfying (\ref{eqn3.5}), we see that the conclusion of Theorem \ref{thm1.4} would follow immediately from Theorem \ref{thm3.4} if we could ensure the colour class $\cC_k$ we obtain has density at least $\delta$, as this would enable us to set $A=\cC_k$. Unfortunately, this cannot always be guaranteed. Nevertheless, the conclusion of Theorem \ref{thm3.4} informs us that $|\cC_k|\geqslant \delta_2 |X|$ for some $\delta_{2}\gg_{L_1,L_2,P,r,\delta}1$. We may therefore apply Theorem \ref{thm3.4} with this new density $\delta_2$ and find another colour class $\cC_{k_2}$. Iterating this argument eventually yields a colour class of sufficient density that our initial strategy of setting $A$ equal to a colour class can now be used to obtain Theorem \ref{thm1.4}.

The argument outlined above is termed \emph{cleaving} by Prendiville \cite{Pre2021}, who used this method to obtain a supersaturation result for the diagonal quadratic equations considered in \cite{CLP2021} (see \cite[\S2.1]{Pre2021} for an overview of the cleaving strategy in the context of Schur's theorem). We now use this argument to show that, for $X$ sufficiently large, there is a colour class $\cC_k$ such that the conclusion (\ref{eqn3.6}) of Theorem \ref{thm3.4} holds with $A=\cC_k$.

\begin{thm}\label{thm3.5}
Let $d$ and $r$ be positive integers, and let $P$ be an intersective integer polynomial of degree $d$ which satisfies (\ref{eqn3.5}).
Let $s \ge 1$ and $t \ge 0$ be integers such that $s + t \ge s_0(d)$. Let
\[
L_1(\bx) \in \bZ[x_1,\ldots,x_s], \qquad L_2(\by) \in \bZ[y_1,\ldots,y_t] 
\]
be non-degenerate linear forms such that $L_1(1,\ldots,1) = 0$. Let $X \in \bN$ be sufficiently large, and suppose $[X] = \cC_1 \cup \cdots \cup \cC_r$.
Then there exists $k \in [r]$ such that
\begin{equation*}
\# \{ (\bx, \by) \in \cC_k^s \times \cC_k^t: L_1(P(\bx)) = L_2(P(\by)) \} \gg X^{s+t-d}.
\end{equation*}
The implied constant may depend on $L_1, L_2, P, r, \del$. 
\end{thm}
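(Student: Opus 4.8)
The plan is to run the cleaving strategy sketched before the statement, the one extra ingredient being an observation that forces the iteration to terminate: the colour palette has only $r$ elements. First I would record exactly what Theorem~\ref{thm3.4} provides. For each $\delta\in(0,1)$ there is a constant $c(\delta)=c(\delta;r,L_1,L_2,P)>0$ such that, for every $r$-colouring of a sufficiently large $[X]$, some colour class $\cC_k$ simultaneously (i) satisfies $|\cC_k|\ge c(\delta)X$ and (ii) is admissible as the distinguished class in (\ref{eqn3.6}) for \emph{every} $A\subseteq[X]$ with $|A|\ge\delta X$. The only reason one cannot conclude immediately by taking $A=\cC_k$ is that $c(\delta)$ may be far smaller than $\delta$.

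To get around this I would fix $\delta_1:=1/2$ and define recursively $\delta_{i+1}:=\min\{c(\delta_i),\delta_i\}$, producing a non-increasing sequence $(\delta_i)_{i\ge 1}$ of positive reals depending only on $r,L_1,L_2,P$. Then I would apply Theorem~\ref{thm3.4} once for each of the parameters $\delta_1,\dots,\delta_{r+1}$ to the given colouring $[X]=\cC_1\cup\dots\cup\cC_r$; this is legitimate as soon as $X$ is large in terms of $r,L_1,L_2,P$, since only these finitely many $\delta_i$ — each itself a function of $r,L_1,L_2,P$ — intervene. This yields indices $k_1,\dots,k_{r+1}\in[r]$ such that, for every $i$, one has $|\cC_{k_i}|\ge c(\delta_i)X\ge\delta_{i+1}X$ and (\ref{eqn3.6}) holds with $\cC_k$ replaced by $\cC_{k_i}$ for all $A$ of density at least $\delta_i$.

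Next I would invoke pigeonhole: among the $r+1$ indices $k_1,\dots,k_{r+1}\in[r]$ two must agree, say $k_i=k_j=:k$ with $i<j$. Since $i+1\le j$ and $(\delta_\ell)$ is non-increasing, $|\cC_k|=|\cC_{k_i}|\ge\delta_{i+1}X\ge\delta_j X$, so $A:=\cC_k$ is an admissible test set in the conclusion of Theorem~\ref{thm3.4} applied with parameter $\delta_j$. Feeding it in gives
\[
\#\{(\bx,\by)\in\cC_k^s\times\cC_k^t:\ L_1(P(\bx))=L_2(P(\by))\}\gg X^{s+t-d},
\]
with implied constant depending only on $r,L_1,L_2,P$ through $\delta_j$, which is precisely Theorem~\ref{thm3.5}.

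The main obstacle is not analytic — all the substance has been absorbed into Theorem~\ref{thm3.4} — but logical: one must be sure the iteration halts. A priori $\delta\mapsto c(\delta)$ could decay arbitrarily fast, so the sequence $(\delta_i)$ need not stabilise and the naive ``shrink the density, re-apply'' loop could run forever. The fix above is structural rather than quantitative: the colour classes returned by the successive applications all lie in the $r$-element set $\{\cC_1,\dots,\cC_r\}$, so one of them recurs within $r+1$ steps, and monotonicity of $(\delta_i)$ then upgrades the size bound for the recurring class past the density threshold that is actually needed. One should also verify that the implied constants and the threshold for ``$X$ sufficiently large'' depend only on $r,L_1,L_2,P$, which is clear because only finitely many $\delta_i$, themselves determined by $r,L_1,L_2,P$, ever appear.
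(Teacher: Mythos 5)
Your argument is correct and is essentially the same cleaving iteration that the paper uses: the paper sets $\delta_0 = 1/r$ (via pigeonhole on the densest colour class) and then iterates $\delta_i = c_0(\delta_{i-1})$, applying Theorem~\ref{thm3.4} $r$ times and pigeonholing among $k_0,\dots,k_r$, whereas you start from $\delta_1 = 1/2$ and apply Theorem~\ref{thm3.4} $r+1$ times, but the monotonicity-plus-pigeonhole mechanism and the conclusion are identical.
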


\begin{proof}[Proof of Theorem \ref{thm3.5} given Theorem \ref{thm3.4}]
For each $\delta>0$, let $c_{0}(\delta)$ be the implicit constant appearing in the bound $|\cC_k|\gg_{L_1,L_2,,P,r,\delta} X$ in Theorem \ref{thm3.4}. Since decreasing the value of this constant does not invalidate the conclusion of Theorem \ref{thm3.4}, we may henceforth assume that $0<c_{0}(\delta)\leqslant\delta$ for all $\delta>0$.

Now set $\delta_0 = 1/r$ and let $\delta_{i} = c_{0}(\delta_{i-1})$ for all $i\in[r]$. By the pigeonhole principle, we can find $k_{0}\in[r]$ such that $|\cC_{k_{0}}|\geqslant X/r$. For all $i\in[r]$, let $k_{i}\in[r]$ be the index obtained by applying Theorem \ref{thm3.4} with $\delta=\delta_i$. By the pigeonhole principle, we can find $0\leqslant i < j\leqslant r$ such that $k_i = k_j=:k$. We claim that $\cC_k$ satisfies the conclusion of Theorem \ref{thm3.5}. Indeed, since $|\cC_{k}|\geqslant c_{0}(\delta_i)X\geqslant \delta_j X$, our choice of $i$ and $j$ ensures that $\cC_{k}=\cC_{k_{j}}$ satisfies (\ref{eqn3.6}) with $A=\cC_{k_i}=\cC_k$. This completes the proof of Theorem \ref{thm3.5} provided that we assume that $X$ is sufficiently large in terms of $L_{1},L_{2},P,r,$ and $\delta_{r}$, which is permissible as $\delta_{0},\ldots,\delta_{r}$ are all bounded away from $0$ in terms of $L_{1},L_{2},P,r$.
\end{proof}

\begin{proof}[Proof of Theorem \ref{thm1.4} given Theorem \ref{thm3.4}]
In this proof we allow all implicit constants to depend on the parameters $P,a_1,\ldots,a_s,r,\delta$, and assume that $N$ is sufficiently large with respect to these parameters.
In view of Lemma \ref{lem3.2}, Theorem \ref{thm1.4} is equivalent to the same statement with the condition `$x_i\neq x_j$ for all $i\neq j$' removed from the definition of $\cS(\cA)$. We therefore proceed to prove this equivalent version of Theorem \ref{thm1.4}. Note that 
\[
s \ge s_1(d) \ge s_0(d).
\]

We first consider polynomials $P$ satisfying (\ref{eqn3.5}). 
Note that the density statement (DR) follows immediately from Theorem \ref{thm3.4}. 
For the colouring statement (PR), observe that the existence of a non-empty set $I\subseteq[s]$ such that $\sum_{i\in I}a_i =0$ implies that we may express the equation (\ref{eqn1.2}) as a linear form equation (\ref{eqn3.1}) with $L_1(1,\ldots,1)=0$. The desired result may therefore be deduced from Theorem \ref{thm3.5}.

\bigskip
Having proven Theorem \ref{thm1.4} for $P$ satisfying (\ref{eqn3.5}), it remains to treat the general case.
By replacing each $a_i$ with $-a_i$ if necessary, it suffices to prove Theorem \ref{thm1.4} under the assumption that the leading coefficient of $P$ is positive. Hence, there exists $b\in\N$ such that the polynomial $\tilde{P}(X):=P(X+b)$ obeys (\ref{eqn3.5}). 
\bigskip

Now, given a colouring $[N]=\cC_{1}\cup\cdots\cup\cC_r$, we define a new colouring $[N-b]=\tilde{\cC}_1\cup\cdots\cup\tilde{\cC}_r$ by setting $\tilde{\cC}_i:=\{x-b:x\in\cC_i\setminus[b]\}$. By our proof of the special case above, for $N$ sufficiently large, we deduce that there exists $k\in[r]$ such that
\begin{equation*}
\# \{ \bz \in \tilde{\cC}_k^s : a_1 \tilde{P}(z_1) +\cdots + a_s \tilde{P}(z_s)=0 \} \gg N^{s-d}.
\end{equation*}
The partition result (PR) now follows by adding $b$ to each entry of every solution $\bz\in \tilde{\cC}_k^s$ found above to obtain $\gg N^{s-d}$ solutions to (\ref{eqn1.2}) over $\cC_k$.

Similarly, for the density statement (DR), we replace the $\delta$-dense set $A\subseteq[N]$ with the set $\tilde{A}=\{a-b:a\in A\setminus[b]\}$. As in the previous paragraph, we can find $\gg N^{s-d}$ solutions to $a_1 \tilde{P}(z_1) +\cdots + a_s \tilde{P}(z_s)=0$ over $\tilde{A}$, each of which lifts to a solution to (\ref{eqn1.2}) over $A$ by adding $b$ to each entry.
\end{proof}

\begin{remark} As is clear from the deduction above, we in fact establish our main results under the weaker assumption that $s \ge s_0$. This enables an immediate refinement in the number of variables required if a stronger upper bound for $T(d)$ is found. We will also see that one can replace $T(d)$ by $T(P)$, this being the least positive integer $T$ such that \eqref{eqn3.2} has $O_P(X^{2T-d+\eps})$ solutions $\bx \in [X]^{2T}$.
\end{remark}

\subsection{Auxiliary intersective polynomials}
Akin to \cite{CLP2021}, we prove Theorem \ref{thm3.4} by using a `linearisation' procedure so that we may obtain solutions to (\ref{eqn3.1}) by transferring solutions from the linearised equation of the form
\begin{equation}\label{eqn3.7}
    L_{1}(\bn) = L_2(P_D(\bz)),
\end{equation}
for some primitive linear form $L$ and some auxiliary integer polynomial $P_D$. The purpose of this subsection is to formally define the auxiliary polynomials that we use, as well as to state the linearised version of Theorem \ref{thm3.4}.

Let $P$ be an intersective integer polynomial of degree $d \in \bN$.
Recall from the introduction that this means that for each $n\in\N$ there exists $x\in\bZ$ such that $P(x)\equiv 0\mmod{n}$.
Furthermore, observe that the property of being intersective is equivalent to having $p$-adic zeros for every prime $p$. Thus, for each prime $p$, we fix $z_p \in \bZ_p$ such that $P(z_p)=0$. Let $m_p\geqslant 1$ be the multiplicity of $z_p$ as a zero of $P$ over $\Z_p$. This allows us to define a completely multiplicative function $\lambda:\N\to\N$ such that $\lambda(p)=p^{m_{p}}$ for all primes $p$. Explicitly, writing $\ord_p(D)$ for the multiplicity of $p$ in the prime factorisation of $D$, we have
\[
\lam(D) := \prod_p p^{m_p \ord_p(D)} \qquad(D\in\N).
\]
For later use, we record the following fact from \cite[Equation (73)]{Luc2006}:
\begin{equation}\label{eqn3.8}
   D \mid \lam(D) \mid D^d. 
\end{equation}
The Chinese remainder theorem shows that for each positive integer $D$ there is a unique integer
$r_D\in (-D,0]$ such that
\[
r_D \equiv z_p \mmod {p^{\ord_p(D)} \bZ_p}
\]
holds for all primes $p$.

With this notation in place, we can introduce the auxiliary polynomial
\begin{equation}\label{eqn3.9}
    P_D(x) := \frac{P(r_D + Dx)}{\lam(D)} \in \bZ[x].
\end{equation}
Observe that our choice of $r_D$ and $\lambda(D)$ ensures that $P_D$ is indeed a polynomial with integer coefficients. 
These auxiliary polynomials and the surrounding notation were introduced by Lucier \cite{Luc2006} and have subsequently become a standard tool when working with intersective polynomials. The significance of this construction stems from Lucier's result \cite[Lemma 28]{Luc2006} that the greatest common divisor of the coefficients of $P_{D}(X)-P_D(0)$ is uniformly bounded over all $D\in\N$ in terms of $P$ only. This observation is critical in our application of the circle method to exponential sums with intersective polynomial phases (see Lemma \ref{lem6.3}).

Before moving on, we note that $P_D$ is also intersective.

\begin{lemma}\label{lem3.7}
Let $P$ be an intersective integer polynomial of positive degree, and let $D$ be a positive integer. Then the auxiliary polynomial $P_D$ defined by (\ref{eqn3.9}) is intersective.
\end{lemma}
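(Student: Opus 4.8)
The plan is to show that for every positive integer $n$ there is an integer $y$ with $n \mid P_D(y)$, and the cleanest route is to relate congruences for $P_D$ directly to congruences for $P$ via the defining identity $\lam(D) P_D(y) = P(r_D + Dy)$. First I would fix $n \in \bN$ and set $N := n \lam(D)$. Since $P$ is intersective, choose $x \in \bZ$ with $N \mid P(x)$; in fact I would prefer to work $p$-adically, using the fixed roots $z_p \in \bZ_p$ of $P$, so that I may assume $x \equiv z_p \mmod{p^{\ord_p(N)}\bZ_p}$ for every prime $p$ dividing $N$. The point of choosing $x$ to be $p$-adically close to $z_p$ (rather than an arbitrary solution of $P(x) \equiv 0$) is that it forces $x \equiv r_D \mmod{D}$, by the very definition of $r_D$ as the Chinese-remainder combination of the $z_p$ modulo $p^{\ord_p(D)}$; hence we may write $x = r_D + Dy$ for some $y \in \bZ$.

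With this $y$ in hand, the identity $\lam(D) P_D(y) = P(r_D + Dy) = P(x)$ gives $\lam(D) P_D(y) = P(x)$, and since $N = n\lam(D) \mid P(x)$ we get $n \lam(D) \mid \lam(D) P_D(y)$, whence $n \mid P_D(y)$. This proves $P_D$ is intersective. Equivalently, one can phrase this in terms of the multiplicity refinement: for each prime $p$, the substitution $x = r_D + Dy$ near the root $z_p$, combined with $\ord_p(\lam(D)) = m_p\ord_p(D)$ and the fact that $z_p$ is a zero of multiplicity $m_p$, shows $P_D$ has a $p$-adic zero; since this holds for all $p$, the polynomial $P_D$ is intersective.

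The only mild subtlety — and the one place I would be careful — is the claim that choosing $x$ with $x \equiv z_p \mmod{p^{\ord_p(N)}\bZ_p}$ for all $p \mid N$ produces an honest integer $x$ with $x \equiv r_D \mmod{D}$: for the first part one invokes the Chinese remainder theorem to realise the prescribed $p$-adic congruences (for the finitely many primes dividing $N$) by a single integer, and for the second part one uses that $\ord_p(D) \le \ord_p(N)$ for every $p$, together with the defining congruences $r_D \equiv z_p \mmod{p^{\ord_p(D)}\bZ_p}$, to conclude $x \equiv r_D \mmod{p^{\ord_p(D)}}$ for all $p$ and hence modulo $D$. None of this is deep, but it is the step where the construction of $r_D$ and $\lam(D)$ is actually used, so it deserves to be spelled out rather than asserted. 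Everything else is a one-line divisibility manipulation.
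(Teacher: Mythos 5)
Your proof is correct, and it rests on the same core observation as the paper's: the defining congruence $r_D \equiv z_p \pmod{p^{\ord_p(D)}\bZ_p}$ means that the substitution $x = r_D + Dy$ lands on the $p$-adic roots of $P$, so that $\lam(D)P_D(y) = P(r_D+Dy)$ inherits high $p$-adic divisibility. The difference is one of packaging: you fix a target modulus $n$, inflate to $N = n\lam(D)$, use CRT to pick an integer $x$ that is $p$-adically close to every $z_p$ with $p\mid N$, verify $x \equiv r_D \pmod D$ (which uses $D \mid \lam(D) \mid N$), and then push the divisibility through the identity $\lam(D)P_D(y)=P(x)$. The paper's proof is shorter and sharper: it shows directly, for each prime $p$, that $P_D$ has an exact zero in $\bZ_p$, by writing $D = p^k M$ with $p\nmid M$, noting $r_D = z_p + p^k t$ for some $t\in\bZ_p$, and then solving $t + Mx = 0$ in $\bZ_p$ (possible since $M$ is a $p$-adic unit) so that $r_D + Dx = z_p$ exactly and hence $P_D(x) = 0$. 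Both routes are valid; the paper's avoids the CRT bookkeeping and the detour through the modulus $N = n\lam(D)$. One minor remark on your closing ``equivalently'' paragraph: the multiplicities $m_p$ and the precise value of $\lam(D)$ are not actually needed for intersectivity of $P_D$ — they enter only to guarantee $P_D \in \bZ[X]$; once that is granted, the paper's argument uses only that $z_p$ is some $\bZ_p$-zero of $P$.
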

\begin{proof}
It suffices to prove that $P_D$ has a zero over $\bZ_p$ for every prime $p$.
Fix a prime $p$ and write $D=p^{k}M$, where $p\nmid M$ and $k\geqslant 0$. Our definition of $r_D$ implies that $r_D = z_p + p^k t$ for some $t\in\bZ_p$, and so $r_D + Dx = z_p + p^k(t+Mx)$ for all $x\in\bZ_p$. Since $M$ is a multiplicative unit in $\bZ_p$, we can find $x\in\bZ_p$ such that $t + Mx =0$, whence $P_D(x)=0$, as required.
\end{proof}

We now state a `linearised' version of Theorem \ref{thm3.4}.

\begin{thm} \label{thm3.8}
Let $r$ and $d\geqslant 2$ be positive integers, and let $0<\delta<1$ be a real number. 
Let $P$ be an intersective integer polynomial of degree $d$ which satisfies (\ref{eqn3.5}).
Let $s \ge 1$ and $t \ge 0$ be integers such that $s + t \ge s_0(d)$. Let 
$L_1(\bx) \in \bZ[x_1,\ldots,x_s]$
be a non-degenerate linear form for which $L_1(1,1,\ldots,1)=0$, and let $L_2(\by) \in \bZ[y_1,\ldots,y_t]$ be a non-degenerate linear form.
Let $D, Z \in \bN$ satisfy $Z \ge Z_0(D, r, \del, L_1, L_2, P)$, and set $N:=P_D(Z)$.
If $[Z] = \cC_1 \cup \cdots \cup \cC_r$, then there exists $k \in [r]$ such that the following is true. For all $\cA\subseteq[N]$ such that $|\cA|\geqslant\delta N$, we have
\begin{equation}\label{eqn3.10}
\# \{ (\bn,\bz) \in \cA^s \times \cC_k^t: L_1(\bn) = L_2(P_D(\bz)) \} \gg N^{s-1} Z^t.
\end{equation}
The implied constant may depend on $L_1, L_2, P, r, \del$. 
\end{thm}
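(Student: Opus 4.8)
The plan is to prove the linearised counting statement \eqref{eqn3.10} by a direct application of Green's arithmetic regularity lemma, following the strategy of \cite{Cha2022,Pre2021}. First I would dispense with the colouring aspect by working inside a suitable \emph{polynomial Bohr set}. Because $L_2(P_D(\bz))$ only takes values that are determined, modulo small moduli, by the residues of the $z_j$, I would pass to a Bohr set $B \subseteq [Z]$ on which the polynomial phases $e(\theta P_D(z))$ (for the finitely many frequencies $\theta$ relevant to the forthcoming regularity decomposition) are essentially constant, and then pigeonhole to find a colour class $\cC_k$ whose intersection with this Bohr set is a positive proportion of $B$. This is exactly the device flagged in the Methods section (``choosing the colour class which has the largest intersection with a certain polynomial Bohr set''), and it replaces the homogeneous-sets argument of \cite{CLP2021} that is unavailable in the inhomogeneous setting. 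The key structural fact needed here is that $P_D$ is intersective (Lemma \ref{lem3.7}), so that $P_D$ is equidistributed in residue classes after an appropriate $W$-trick, ensuring the relevant local densities do not degenerate.

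Next I would set up the counting. Write $N = P_D(Z)$ and note that the number of $z \in [Z]$ with $P_D(z) \le y$ is $\asymp (y/N^{1/d})$ for $y$ in the relevant range, using \eqref{eqn3.5} (inherited by $P_D$ via monotonicity for large argument) and \eqref{eqn3.8}; thus pushing forward counting measure on $\cC_k \subseteq [Z]$ under $P_D$ gives a function on $[N]$ of $L^1$-mass $\asymp Z$ and $L^\infty$-norm $O(N^{(d-1)/d})$-ish per level set, which is the profile one needs for an $L^2$-based argument. The equation $L_1(\bn) = L_2(P_D(\bz))$ is a single non-degenerate linear equation in $s+t$ variables with $L_1(1,\dots,1)=0$; by orthogonality its solution count is $\int_{\bT} \widehat{1_{\cA}}(b_1\alpha)\cdots \widehat{g}(c_1\alpha)\cdots \, d\alpha$ where $g$ is the pushforward of $1_{\cC_k}$. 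I would then apply the arithmetic regularity lemma to decompose $1_{\cA} = f_{\str} + f_{\sml} + f_{\unf}$ (and similarly handle $g$), bound the contribution of $f_{\unf}$ using the hypothesis $s+t \ge s_0(d) = 2T(d)+1$ — this is precisely where the restriction/moment input \eqref{eqn3.4} enters, guaranteeing enough $L^p$ control to absorb the uniform part — bound $f_{\sml}$ trivially in $L^2$, and extract the main term from $f_{\str}$, which is a bounded-complexity nilsequence (in fact, since we only have a single linear equation of complexity one, an almost periodic / generalised-arithmetic-progression structure suffices). On the structured part the count is $\gg N^{s-1} Z^t$ because $|\cA| \ge \delta N$ forces $f_{\str}$ to have average $\ge \delta$, and $L_1(1,\dots,1)=0$ makes the equation ``self-consistent'' so the main term is a genuinely positive multiple of $\delta^{s} N^{s-1} Z^t$, with the $Z^t$ coming from the $t$ free $z$-variables once their residues are fixed by the Bohr-set reduction.

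The main obstacle I anticipate is the interface between the pushforward measure $g$ and the regularity lemma: $g$ is not bounded by $1$ (it is a pushforward, so individual fibres $P_D^{-1}(m)$ have size up to $O(N^\eps)$ generically but the measure is spread over an interval of length $N^{1/d}$ in a way dictated by the polynomial), so one cannot apply the standard $L^\infty \le 1$ form of the regularity lemma directly to $g$. Following \cite{Pre2021,Cha2022}, I would instead keep the $\bz$-variables on the ``physical'' side $[Z]$ and only apply the regularity decomposition to $1_{\cA}$, treating $\sum_{\bz \in \cC_k^t} e(\alpha L_2(P_D(\bz)))$ as a Weyl-type exponential sum to be estimated by the minor-arc bounds and the restriction estimate for the associated majorant — but here we can quote the restriction machinery already developed for $P_D$ (the uniform-in-$D$ estimates alluded to after \eqref{eqn3.9}, via Lucier's gcd bound). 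The secondary obstacle is quantitative book-keeping of the Bohr-set radius versus the regularity-lemma complexity versus $Z_0(D,r,\delta,L_1,L_2,P)$; these must be chosen in the correct order (Bohr radius depending on the regularity complexity, which depends only on $\delta$ and the forms), and I would simply record that $Z$ is taken large enough at the end, exactly as permitted by the statement.
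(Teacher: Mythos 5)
Your proposal captures the right high-level ingredients — polynomial Bohr sets in $P_D$ to tame the colouring (Lemmas \ref{lem8.7}--\ref{lem8.8}), the arithmetic regularity lemma applied to $1_{\cA}$, restriction estimates for the pushforward majorant to absorb the uniform part, and almost-periodicity of $f_{\str}$ along the Bohr set — and these are indeed the load-bearing ideas in the paper's \S\ref{sec8}. But there is a genuine gap in the quantifier order, and the paper devotes real machinery to fixing it.

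Theorem \ref{thm3.8} asserts: \emph{there exists} $k\in[r]$ such that \emph{for all} $\cA$ with $|\cA|\geqslant\delta N$, the bound \eqref{eqn3.10} holds. Your proposal applies the regularity lemma to $1_{\cA}$, reads off the resulting phase $\btheta\in\bT^K$, forms the Bohr set $\bohr_{P_D}(\btheta,\rho)$, and then pigeonholes to find $\cC_k$ meeting that Bohr set in positive density. At that point $\btheta$, the Bohr set, and therefore $k$ all depend on $\cA$; you have proved ``for all $\cA$ there exists $k$,'' which is the wrong order. The paper resolves this in two steps: first, Theorem \ref{thm8.1} proves the statement with a \emph{finite} collection $\cA_1,\ldots,\cA_r$ of dense sets in place of the universal quantifier, by running the regularity lemma \emph{simultaneously} on the $r$ indicators (Lemma \ref{lem8.3} outputs a single shared phase $\btheta\in\bT^{Kr}$, so the Bohr set is common to all of them and a single pigeonhole yields one $k$ good for every $i\in[r]$); second, Lemma \ref{lem8.2} — a finite axiom-of-choice argument — upgrades ``for every $r$-tuple of dense sets there is a uniformly good $k$'' to ``there is a $k$ good for all dense sets.'' Without some version of this two-step device your argument does not deliver the stated conclusion.

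Two smaller points, neither fatal but both worth flagging. First, the paper reduces to $\gcd(L_1)=1$ (Proposition \ref{prop3.10}) before fixing $\bz$ and translating; this guarantees a $\bv$ with $L_1(\bv)=1$, which is what lets one parameterise solutions of $L_1(\bn)=L_2(P_D(\bz))$ as translates of solutions of $L_1(\bn)=0$ via $\Psi_{\bz}$. Without this normalisation, the equation with $\bz$ frozen may be insoluble for divisibility reasons. Second, the positivity of the main term on the structured part is not automatic from ``$f_{\str}$ has average $\geqslant\delta$''; you need the supersaturation theorem of Frankl--Graham--R\"odl \cite{FGR1988} (Lemma \ref{lem8.10} in the paper), together with the growth estimate of Lemma \ref{lem8.12} ensuring the shifted arguments $n_i+v_iL_2(P_D(\bz))$ remain inside $[N]$ uniformly in $D$.
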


\begin{remark} \label{rmk3.9}
Observe that, in contrast with Theorem \ref{thm3.4}, we have not specified a lower bound for the density of the colour class $\cC_k$ provided by Theorem \ref{thm3.8}. This is because such a conclusion follows automatically by a simple counting argument. Indeed, for $t\geqslant 1$, the cardinality appearing on the left-hand side of (\ref{eqn3.10}) is bounded above by
\begin{equation*}
    \sum_{z\in\cC_k}|\{ (\bn,\bz) \in [N]^s \times [Z]^t: L_1(\bn) = L_2(P_D(\bz)),\; z_{t} = z \}|
    \leqslant |\cC_{k}|N^{s-1}Z^{t-1}.
\end{equation*}
Thus, we see that $|\cC_k|\geqslant cZ$, where $c$ is the implicit constant in (\ref{eqn3.10}).
\end{remark}

Before moving on, we show that it suffices to prove Theorem \ref{thm3.8} under the assumption that $\gcd(L_1)=1$. In \S\ref{sec8}, we demonstrate the utility of this condition by parameterising solutions to (\ref{eqn3.7}) with $\bz$ fixed.

\begin{prop}\label{prop3.10}
Assume that Theorem \ref{thm3.8} is true in the cases where $\gcd(L_1)=1$. Then,
up to modifying the quantity $Z_0(D,r,\delta,L_1,L_2,P)$ and the implicit constant in (\ref{eqn3.10}), Theorem \ref{thm3.8} holds in general.
\end{prop}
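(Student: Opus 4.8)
The plan is to reduce the general case to the primitive case by factoring out $g := \gcd(L_1)$ from the left-hand side of the linearised equation and absorbing it into the right-hand side and into the target set $\cA$. Write $L_1 = g L_1'$, where $L_1'$ is a non-degenerate linear form with $\gcd(L_1') = 1$ and $L_1'(1,\ldots,1) = 0$. The equation $L_1(\bn) = L_2(P_D(\bz))$ then has integer solutions only when $g \mid L_2(P_D(\bz))$; when this divisibility holds, it is equivalent to $L_1'(\bn) = g^{-1} L_2(P_D(\bz))$. The right-hand side is still a linear form in $P_D(z_1),\ldots,P_D(z_t)$, but now with coefficients $b_j/g$, which need not be integers. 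To fix this, I would instead clear denominators the other way: multiply through so that we count solutions to $L_1'(\bn) = \widetilde{L}_2(P_D(\bz))$ where $\widetilde L_2$ is obtained from $L_2$ after an appropriate adjustment — the cleanest route is a substitution at the level of the $z$-variables using the polynomial $\widetilde P_D := P_{D'}$ for a suitable multiple $D'$ of $D$, or simply to note that one may replace $P_D$ by $g P_D$ is not a polynomial of the right form, so instead one works with the congruence condition directly.

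More precisely, I would argue as follows. For each residue $\bz_0 \in (\bZ/g\bZ)^t$, the condition $g \mid L_2(P_D(\bz))$ restricts $\bz$ to a union of residue classes mod $g$; among the $g^t$ residue classes, a positive proportion $\gg_g 1$ of tuples $\bz_0$ satisfy $L_2(P_D(\bz_0)) \equiv 0 \pmod g$, because $L_2(P_D(\cdot))$ is a fixed polynomial map and the solution set of a polynomial congruence mod $g$ is non-empty here — indeed $\bz_0 = (c,c,\ldots,c)$ works whenever $g \mid L_2(1,\ldots,1) P_D(c)$, and more robustly one can use that $L_2$ is non-degenerate to solve for one coordinate. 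Fix such a residue class $\cR \subseteq (\bZ/g\bZ)^t$ of positive density. Partition each colour class $\cC_i$ further according to residues mod $g$: $\cC_i = \bigcup_{v \bmod g} \cC_{i,v}$, giving an $rg$-colouring of $[Z]$. Apply the primitive case of Theorem \ref{thm3.8} (with $r$ replaced by $rg$, with $L_1'$ in place of $L_1$, and with the right-hand side form being $g^{-1} L_2$ — which has integer coefficients once we restrict $\bz$ to lie in the chosen residue class where the relevant quantities are divisible by $g$) to obtain a colour class $\cC_{k,v}$ and the desired count of solutions $(\bn, \bz) \in \cA^s \times \cC_{k,v}^t$, provided $Z$ is large enough. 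Since $\cC_{k,v} \subseteq \cC_k$, these are in particular solutions with $\bz \in \cC_k^t$, and by construction each one lifts to a solution of the original equation $L_1(\bn) = L_2(P_D(\bz))$. The count $\gg N^{s-1} Z^t$ is preserved up to the constant depending on $g$ (hence on $L_1$), which is permitted.

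The main obstacle is bookkeeping the integrality of the modified right-hand side form: after dividing $L_1$ by $g$ one must check that $g^{-1} L_2(P_D(\bz))$ is genuinely an integer for all $\bz$ in the selected residue class, and that the resulting form in the variables $P_D(z_j)$ still has \emph{non-zero} integer coefficients so that the primitive case applies verbatim. This is handled by choosing the residue class $\cR$ carefully and, if necessary, passing to the polynomial $P_{D'}$ for $D' = D \cdot g^{?}$ via the congruence lemmas in the appendix, so that $P_{D'}(z) \equiv P_{D'}(0) \pmod{g}$ for all $z$, which forces the divisibility uniformly; the coefficient $b_j/g$ of $L_2$ need not be an integer, but $b_j P_{D'}(z_j) \equiv b_j P_{D'}(0) \pmod g$ lets one reduce to a constant-modulo-$g$ contribution that can be absorbed. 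Since $Z_0$ and the implied constant are allowed to depend on $L_1, L_2, P$, and we may enlarge $r$ to $rg$ at the cost of shrinking the implied constant, the argument goes through; verifying that no coefficient becomes zero and that the non-degeneracy hypotheses of the primitive case are met is the only genuinely delicate point.
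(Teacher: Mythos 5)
Your plan is pointed in the right direction (reduce to primitive $L_1$ by factoring out $g=\gcd(L_1)$), and you correctly identify the central difficulty: after dividing through by $g$ the right-hand side $g^{-1}L_2(P_D(\bz))$ does not obviously have the required shape $L_2'(P_{D'}(\bz'))$ with $L_2'$ an integer linear form. But you never actually resolve it, and your proposed workarounds do not close the gap.

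There are two concrete obstructions. First, the combination $L_2(P_D(\bz))$ being divisible by $g$ on some residue class $\bz\bmod g$ does not allow you to divide the coefficients $b_j$ by $g$ (they need not individually be divisible), nor does forcing $P_{D'}(z)\equiv P_{D'}(0)\pmod g$ give you $g\mid L_2(P_{D'}(\bz))$ unless you also have $g\mid P_{D'}(0)L_2(1,\ldots,1)$, which is not available. Second, and independently, even if the integrality issue were fixed, you keep $\cA\subseteq[N]$ unchanged while your modified equation $L_1'(\bn)=g^{-1}L_2(P_D(\bz))$ lives at the scale $N/g$ on the right; Theorem \ref{thm3.8} requires the $\bn$-scale and $\bz$-scale to be linked by $N=P_D(Z)$, and you have broken that link without repairing it. The missing ingredient is Lucier's structural identity \cite[Lemma 22]{Luc2006}: there is $m\in(-M,0]$ (with $M=\gcd(L_1)$) such that $\lambda(M)P_{DM}(X)=P_D(m+MX)$. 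Substituting $z=m+M\tilde z$ restricts $\bz$ to a single arithmetic progression, makes $P_D(z)$ divisible by $\lambda(M)=M\kappa$ \emph{with quotient again of the form $P_{D'}(\tilde z)$} for $D'=DM$, and then the factor $\kappa$ is absorbed by the substitution $\bn=\kappa\tilde\bn+h$, using the translation invariance $L_1(1,\ldots,1)=0$ and a pigeonhole argument to choose $h$ so that the pullback of $\cA$ to $[\tilde N]$ with $\tilde N=P_{DM}(\tilde Z)$ retains density $\ge\delta/2$. This keeps the linearised equation exactly in the form to which the primitive case of Theorem~\ref{thm3.8} applies, and is what your proposal, as written, lacks.
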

\begin{proof}
Let $M=\gcd(L_1)$, and assume that $M>1$. Using (\ref{eqn3.8}), we can find $\kappa\in\N$ such that $\lambda(M)=M\kappa$. By \cite[Lemma 22]{Luc2006}, there exists an integer $m$ in the range $-M < m \leqslant 0$ such that $\lambda(M)P_{DM}(X) = P_{D}(m + MX) \in \bZ[X]$. Let $Z\in\N$ be sufficiently large, let $N=P_D(Z)$, and set
\begin{equation*}
    \tilde{Z} := \frac{Z - m}{M},
    \quad \tilde{N}:= P_{DM}(\tilde{Z})=\frac{P_D(m + M\tilde{Z})}{\lambda(M)}=\frac{N}{M\kappa}.
\end{equation*}
Finally, given an $r$-colouring $[Z]=\cC_1 \cup\cdots\cup \cC_r$, set $\tilde{\cC}_i:=\{z\in[\tilde{Z}]: m+Mz\in\cC_i\}$ for each $i\in[r]$. 

Let $\delta>0$, and let $L$ be the non-degenerate linear form satisfying $L_1=\gcd(L_1)L$. Let $k\in[r]$ be the index given by applying Theorem \ref{thm3.8} with the $r$-colouring $[\tilde{Z}]=\tilde{\cC}_1 \cup\cdots\cup \tilde{\cC}_r$ and with parameters $(L,DM,\delta/2)$ in place of $(L_1,D,\delta)$. Now given $\cA\subseteq[N]$ such that $|\cA|\geqslant\delta N$, we claim that there exists a set $\tilde{\cA}\subseteq[\tilde{N}]$ of the form $\tilde{\cA}=\{x\in[\tilde{N}]:(\kappa x + h)\in\cA\}$, for some integer $h$, such that $|\tilde{\cA}|\geqslant (\delta/2)\tilde{N}$. Assuming that this is true, we observe that for any $(\tilde{\bn},\tilde{\bz})\in\tilde{\cA}^{s}\times\tilde{\cC}_k^t$ satisfying
\begin{equation*}
    L(\tilde{\bn}) = L_2(P_{DM}(\tilde{\bz})),
\end{equation*}
the tuple $(\bn,\bz)=(\kappa\tilde{\bn} + h,M\tilde{\bz} +m)\in\cA^{s}\times\cC_k^t$ satisfies
\begin{equation*}
    L_1(\bn) = \lambda(M)L(\tilde{\bn}) = L_2(P_D(m + M\tilde{\bz})) = L_2(P_D(\bz)).
\end{equation*}
Since this map $(\tilde{\bn},\tilde{\bz})\mapsto(\bn,\bz)$ is injective, the desired bound (\ref{eqn3.10}) follows from our choice of $k$.

It only remains to establish the existence of the set $\tilde{\cA}$. 
By partitioning $[N]$ into residue classes modulo $\kappa$, the pigeonhole principle furnishes an integer $b$ in the range $0 \le b < \kap$ such that the set 
\[
\cB:=\{x\in[(N+b)/\kappa]: (\kappa x - b)\in\cA \}
\]
satisfies $|\cB|\geqslant \delta N/\kappa$. Note that, provided $N$ is sufficiently large, we have 
\[
(N+b)/\kappa > N/(M\kappa)=\tilde{N}.
\]
Hence, by partitioning $[(N+b)/\kappa]$ into intervals of length between $\tilde{N}/2$ and $\tilde{N}$, we deduce from the pigeonhole principle that there exists a translate of $\cB$ with density at least $\delta/2$ on $[\tilde{N}]$. We can therefore find an integer $h$ such that the set $\tilde{\cA}=\{x\in[\tilde{N}]:(\kappa x + h)\in\cA\}$ satisfies $|\tilde{\cA}|\geqslant (\delta/2)\tilde{N}$, completing the proof of the claim.
\end{proof}

\subsection{Deducing Theorem \ref{thm1.5}}

We close this section by demonstrating that Theorem \ref{thm1.5} follows from Theorem \ref{thm3.8}. We first state the following slightly more technical version of Theorem \ref{thm1.5}.

\begin{thm}\label{thm3.11}
Let $d\geqslant 2$ and $r$ be positive integers, and let $s_0(d)$ be defined by (\ref{eqn1.3}).
Let $P$ be an intersective integer polynomial of degree $d$ which has a positive leading coefficient.
Let $s$ and $t$ be positive integers satisfying $s+t\geqslant s_0(d)$. Let $L_1$ and $L_2$ be non-degenerate linear forms in $s$ and $t$ variables respectively, and 
assume that $L_1(1,1,\ldots,1)=0$. There exists a positive constant $c_0=c_0(L_1,L_2,P,\delta,r)$ and a positive integer $N_{0}=N_{0}(L_1,L_2,P,\delta,r)$ such that the following is true. Let $Z_{0}\geqslant N_0$ be a positive integer and set $Z_{i}=P(Z_{i-1})$ for all $1\leqslant i\leqslant r$. Then given any $r$-colouring $\{1,\ldots,Z_{r}\}=\cC_{1}\cup\cdots\cup\cC_r$, there exist $k,m\in\{1,\ldots,r\}$ and an interval of positive integers $I$ of length $Z_{m}$ such that
\begin{equation*}
     \{ (\bn,\bz) \in (\cC_k \cap I)^{s} \times (\cC_k \cap [Z_{m-1}])^{t}: L_1(\bn) = L_2(P(\bz)) \} \geqslant c_0 Z_{m-1}^{d(s-1)+t}.
\end{equation*}
\end{thm}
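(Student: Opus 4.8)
The strategy is to reduce Theorem \ref{thm3.11} to Theorem \ref{thm3.8} by feeding in a carefully chosen scale and by iterating Theorem \ref{thm3.8} across the tower $Z_0 < Z_1 < \cdots < Z_r$, exploiting that the colour classes $\cC_i$ restrict to colourings of each of the shorter intervals $[Z_j]$. First, since $P$ has positive leading coefficient, there is $b \in \bN$ such that $\tilde P(X) := P(X+b)$ satisfies (\ref{eqn3.5}), and — after replacing $L_1, L_2$ by themselves and translating colour classes by $b$ as in the deduction of Theorem \ref{thm1.4} — it suffices to prove the statement for a polynomial $P$ obeying (\ref{eqn3.5}), at the cost of adjusting $N_0$ and $c_0$ by absolute amounts. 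We also reduce to $\gcd(L_1) = 1$ via Proposition \ref{prop3.10}. With $D := 1$, the auxiliary polynomial $P_1$ differs from $P$ only by the fixed affine change $x \mapsto r_1 + x$ and division by $\lambda(1) = 1$, so $P_1$ and $P$ agree up to a bounded shift; thus a counting statement for $L_1(\bn) = L_2(P_1(\bz))$ transfers to one for $L_1(\bn) = L_2(P(\bz))$.

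The core of the argument is the iteration. Set $\delta_0 := 1/r$ and, as in the proof of Theorem \ref{thm3.5}, define $\delta_i := c_0'(\delta_{i-1})$ for $i \in [r]$, where $c_0'(\delta)$ is the implied constant from Theorem \ref{thm3.8} (applied with $D=1$, so that $N = P_1(Z) \asymp_P P(Z)$), arranged so that $0 < c_0'(\delta) \le \delta$. For each $m \in [r]$, apply Theorem \ref{thm3.8} with $Z := Z_{m-1}$, with the $r$-colouring of $[Z_{m-1}]$ induced by restricting $\cC_1 \cup \cdots \cup \cC_r$, and with density parameter $\delta_m$: this produces an index $k_m \in [r]$ such that for every $\cA \subseteq [P_1(Z_{m-1})]$ with $|\cA| \ge \delta_m P_1(Z_{m-1})$ one has $\gg P_1(Z_{m-1})^{s-1} Z_{m-1}^t$ solutions $(\bn, \bz) \in \cA^s \times (\cC_{k_m} \cap [Z_{m-1}])^t$ to $L_1(\bn) = L_2(P_1(\bz))$. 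We are not yet done, because the $\bn$-variables must also lie in a single colour class and in an interval of length $Z_m$: this is where the cleaving comes in. Observe that $P_1(Z_{m-1}) \asymp_P P(Z_{m-1}) \asymp Z_m$, so the range $[P_1(Z_{m-1})]$ sits inside $[Z_m] \subseteq [Z_r]$; by pigeonhole there is $k_0$ with $|\cC_{k_0} \cap [Z_m]| \ge Z_m / r \gg \delta_0 P_1(Z_{m-1})$. By the pigeonhole principle among the indices $k_0, k_{1}, \ldots, k_{r}$ (there are $r+1$ of them, all in $[r]$) there exist $0 \le i < j \le r$ with $k_i = k_j =: k$. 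Taking $m := j$ and $\cA := \cC_k \cap [P_1(Z_{m-1})]$ — which has density $\ge \delta_i P_1(Z_{m-1}) \ge \delta_j P_1(Z_{m-1})$ by the nesting $\delta_0 \ge \delta_1 \ge \cdots$ and the cleaving bound $|\cC_k| \ge c_0'(\delta_i) Z \ge \delta_{i+1} \cdot \cdots \ge \delta_j$-fraction — we conclude that there are $\gg P_1(Z_{m-1})^{s-1} Z_{m-1}^t \gg Z_{m-1}^{d(s-1)} Z_{m-1}^t = Z_{m-1}^{d(s-1)+t}$ solutions with $\bn \in (\cC_k \cap I)^s$ and $\bz \in (\cC_k \cap [Z_{m-1}])^t$, where $I := [P_1(Z_{m-1})]$ is an interval of length $\asymp Z_m$; after trimming $I$ to exact length $Z_m$ (which only changes the count by a constant factor, using $\deg P = d$ and $P_1(Z_{m-1}) \asymp Z_m$) we obtain the claimed bound. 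Finally we translate back by $b$ and by the $\gcd(L_1)$-reduction, absorbing everything into $c_0$ and $N_0$.

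The main obstacle is bookkeeping the various scales so that everything lines up: one needs $P_1(Z_{m-1})$ to be comparable to $Z_m = P(Z_{m-1})$ (true up to a bounded affine shift), one needs the colour class $\cC_k$ restricted to $[P_1(Z_{m-1})] \subseteq [Z_m]$ to inherit enough density from the cleaving step, and one must be careful that Theorem \ref{thm3.8} is applied at scale $Z_{m-1}$ (so its "$N$" is $P_1(Z_{m-1})$, and its "$Z$" is $Z_{m-1}$), which is exactly what makes the $\bz$-variables land in $[Z_{m-1}]$ as required. The translation to ensure (\ref{eqn3.5}) and the passage $\gcd(L_1) \to 1$ are both routine given the analogous manoeuvres already carried out for Theorems \ref{thm1.4} and in Proposition \ref{prop3.10}. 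A minor technical point is that the constant $N_0$ must be chosen large enough that Theorem \ref{thm3.8} is applicable at every scale $Z_{m-1} \ge Z_0 \ge N_0$ simultaneously for all of the finitely many density parameters $\delta_0, \ldots, \delta_r$, which is legitimate since these are bounded below in terms of $L_1, L_2, P, r$ only.

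**Deducing Theorem \ref{thm1.5} from Theorem \ref{thm3.11}.** Given the colouring $[N] = \cC_1 \cup \cdots \cup \cC_r$ with $N \ge N_0$, set $Z_0 := \lceil N^{d^{-r}} \rceil$; then the tower $Z_i = P(Z_{i-1})$ satisfies $Z_i \asymp_P Z_0^{d^i}$, so $Z_r \asymp_P Z_0^{d^r} \asymp N$, and after possibly shrinking $N_0$ we may assume $Z_r \le N$, so that the colouring restricts to $[Z_r]$. Apply Theorem \ref{thm3.11} to obtain $k, m \in [r]$ and an interval $I$ of length $Z_m$ with $\ge c_0 Z_{m-1}^{d(s-1)+t}$ monochromatic solutions $(\bn, \bz) \in (\cC_k \cap I)^s \times (\cC_k \cap [Z_{m-1}])^t$ to $L_1(\bn) = L_2(P(\bz))$. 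Since $M := N^{d^{-r}} \asymp Z_0 \le Z_{m-1}$, we have $Z_{m-1}^{d(s-1)+t} \ge M^{d(s-1)+t}$ up to a constant, and since $I \subseteq [Z_r] \subseteq [N]$ and $[Z_{m-1}] \subseteq [N]$, these are genuine monochromatic solutions inside $[N]$. Absorbing constants, this yields $\ge c_0 M^{d(s-1)+t}$ solutions in $\cC_k^s \times \cC_k^t$, which is the assertion of Theorem \ref{thm1.5}.
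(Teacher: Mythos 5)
Your overall skeleton — iterating Theorem \ref{thm3.8} over the tower $Z_0 < Z_1 < \cdots < Z_r$, pigeonholing to find coinciding indices $k_i = k_j$, and invoking the density bound of Remark \ref{rmk3.9} (cleaving) — is the right framework, and the preliminary reductions to $P$ satisfying (\ref{eqn3.5}) and $\gcd(L_1)=1$ are sound (note $P_1 = P$ exactly, since $r_1 = 0$ and $\lam(1)=1$, so no shift is needed). Your deduction of Theorem \ref{thm1.5} at the end is essentially the paper's. But the core cleaving step contains a real gap caused by a mismatch of scales.

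Having found $i < j$ with $k_i = k_j = k$, you set $m := j$, so Theorem \ref{thm3.8} is invoked at the \emph{larger} scale $Z_{m-1} = Z_{j-1}$, which forces you to produce $\cA \subseteq [Z_j]$ of relative density $\ge \del_j$. You propose $\cA := \cC_k \cap [Z_j]$ and justify its density by ``the cleaving bound $|\cC_k| \ge c_0'(\del_i)Z$.'' But Remark \ref{rmk3.9} applied to $k_i$ controls the density of $\cC_k$ only inside the \emph{short} initial window $[Z_{i-1}]$ (and, if $i = 0$, your pigeonhole gives density in whichever $[Z_m]$ you used, which in any case is not $[Z_j]$ for a $j$ chosen afterwards). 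Density of a colour class in a short initial segment says nothing about its density in the vastly longer interval $[Z_j]$ — the class could live entirely inside $[Z_{i-1}]$ and be empty on $[Z_j]\setminus[Z_{i-1}]$. This is the step that fails.

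The correct direction, as in the paper, is to set $m = i+1$ and apply Theorem \ref{thm3.8} at the \emph{smaller} scale $Z_i$ (using $k_i$), while drawing the density information from $k_j$ at the \emph{larger} scale $[Z_j]$. The crucial intermediate move — missing from your proposal — is to partition $[Z_j]$ into subintervals of length $\approx Z_{i+1}$ and pigeonhole to find one subinterval $I \subseteq [Z_j]$ (which need not be the initial segment $[Z_{i+1}]$) in which $\cC_k$ has relative density $\ge \eta(\del_j)/2$; one then translates $I$ to $[Z_{i+1}]$ to form $\cA$ and shifts the solutions back using $L_1(1,\ldots,1) = 0$. Correspondingly, the parameters must be defined as the \emph{increasing} sequence $\del_r = 1/r$, $\del_{m-1} := \eta(\del_m)/2$, so that $\eta(\del_j)/2 = \del_{j-1} \ge \del_i$ whenever $i < j$. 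Your decreasing sequence $\del_m = c_0'(\del_{m-1})$, copied from the single-scale cleaving in the proof of Theorem \ref{thm3.5}, yields $\eta(\del_j)/2 = \del_{j+1}/2 < \del_i$, so it would not close the loop even after reversing the direction. In short: Theorem \ref{thm3.5} is a single-scale cleaving argument, and transplanting it verbatim to the multi-scale setting of Theorem \ref{thm3.11} is what breaks.
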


\begin{proof}[Proof of Theorem \ref{thm1.5} given Theorem \ref{thm3.11}]
By replacing $P$ and $L_2$ with $-P$ and $-L_2$ respectively if necessary, we may assume without loss of generality that the leading coefficient of $P$ is positive. Note that, if $Q$ is an integer polynomial of positive degree, then $Q(x+1)/Q(x)\to 1$ as $x\to\infty$. We therefore deduce that, provided $N$ is sufficiently large, there exists $Z_0\in\N$ such that $N/2 < Z_r \leqslant N$, where $Z_1,\ldots,Z_r$ are as defined in the statement of Theorem \ref{thm3.11}.
Moreover, if $N$ (and hence $Z_0$) is sufficiently large relative to $P$ and $r$, then we may assume that $Z_{r-m} \asymp_P N^{d^{-m}}$ for all $0\leqslant m\leqslant r$. Finally, since $M:=N^{d^{-r}}\ll_P Z_{j-1}$ for all $j\in[r]$, applying Theorem \ref{thm3.11} to the colouring $[Z_r]=(\cC_1 \cap [Z_r]) \cup\cdots\cup (\cC_r \cap [Z_r])$ establishes Theorem \ref{thm1.5}.
\end{proof}

As in our deduction of Theorem \ref{thm1.4} from Theorem \ref{thm3.4}, we prove Theorem \ref{thm3.11} from Theorem \ref{thm3.8} using Prendiville's cleaving method. The particular `multi-scale' cleaving argument we use is a variant of the proof of \cite[Theorem 8.1]{Pre2021}.

\begin{proof}[Proof of Theorem \ref{thm3.11} given Theorem \ref{thm3.8}]
Let $\eta(\delta)=\eta(L_1,L_2,P,r;\delta)>0$ be the implicit constant in (\ref{eqn3.10}). The conclusion of Theorem \ref{thm3.8} implies that we may assume that $\eta(\delta)$ is decreasing in $\delta$, and that $\eta(\delta)<\delta$ for all $0<\delta\leqslant 1$.

Let $\delta_r:=1/r$, and for each $i\in[r]$ set $\delta_{r-i} = \eta(\delta_{r-i+1})/2$. Note that $\delta_0 \leqslant \delta_1 \leqslant \ldots\leqslant \delta_r$. Let $Z_0,Z_1,\ldots,Z_r$ be as defined in the statement of Theorem \ref{thm3.11}, and assume that $Z_0$ is sufficiently large in terms of $L_1,L_2,P,r$. By our construction of the $\delta_i$, we may therefore assume that each $Z_i$ is sufficiently large relative to $\delta_i$.

For each $i\in\{0,1,\ldots,r\}$, let $k_i\in[r]$ be the index given by applying Theorem \ref{thm3.8} with parameters $(D,Z,\delta)=(1,Z_{i},\delta_i)$ to the colouring $[Z_i]=(\cC_1 \cap[Z_i])\cup\cdots\cup(\cC_r\cap[Z_i])$. By the pigeonhole principle, we can find $k\in[r]$ and $0\leqslant i<j\leqslant r$ such that $k=k_i = k_j$. 

Recall from Remark \ref{rmk3.9} that $|\cC_k \cap[Z_j]|\geqslant \eta(\delta_j)Z_j$. Hence, by partitioning $[Z_j]$ into intervals of lengths between $Z_{i+1}/2$ and $Z_{i+1}$, the pigeonhole principle furnishes an interval $I\subseteq[Z_j]$ of length $|I|=Z_{i+1}$ such that 
\begin{equation*}
    |\cC_k\cap I|\geqslant (\eta(\delta_j)/2)|I| = \delta_{j-1}Z_{i+1}\geqslant \del_i Z_{i+1}.
\end{equation*}
Let $h$ be the integer satisfying $I + h = [Z_{i+1}]$, whence $\cA:= h + (\cC_k\cap I) \subseteq[Z_{i+1}]$. By the translation invariance property $L_1(1,\ldots,1)=0$, observe that if $(\bn,\bz)\in \cA^s \times (\cC_k \cap [Z_i])^t$ is a solution to $L_1(\bn)=L_2(P(\bz))$, then $(\bn-h,\bz)\in (\cC_k\cap I)^s \times (\cC_k \cap [Z_i])^t$ is also a solution. 
Here, for $\bn=(n_1,\ldots,n_s)$, we have written $\bn - h =(n_1 - h,\ldots,n_s - h)$.
Setting $m=i+1$, our choice of $k=k_i$ therefore completes the proof.
\end{proof}

To summarise, we have now shown that all of our main results follow from Theorem \ref{thm3.4} and Theorem \ref{thm3.8}. The focus of the rest of this paper is on first showing how to deduce Theorem \ref{thm3.4} from Theorem \ref{thm3.8} and then, finally, proving Theorem \ref{thm3.8}. 

\section{Linearisation and the \texorpdfstring{$W$}{W}-trick}\label{sec4}
In this section we perform the preliminary manoeuvres needed to deduce Theorem \ref{thm3.4} from Theorem \ref{thm3.8}. Henceforth, until the end of \S\ref{sec7}, we fix the parameters
\begin{equation}\label{eqn4.1}
    \del \in (0,1],r,L_1,L_2,P
\end{equation}
appearing in the statement of Theorem \ref{thm3.4} and allow all implicit constants to depend on these parameters unless specified otherwise. In particular, we assume that $P$ is an intersective integer polynomial satisfying (\ref{eqn3.5}). 
Finally, let $X$ and $C$ be positive integers, sufficiently large in terms of the parameters (\ref{eqn4.1}). 

\subsection{The \texorpdfstring{$W$}{W}-trick}

There are two main obstacles which need to be overcome when attempting to replace the equation $L_1(P(\bx))=L_2(P(\by))$ appearing in Theorem \ref{thm3.4} with the linearised equation $L_1(\bn)=L_2(P_D(\bz))$ in Theorem \ref{thm3.8}. The first problem concerns the different scales of the variables in the latter equation. This issue is handled by considering a weighted count of solutions, which we address in the next subsection.
The second obstacle comes from the fact that, unlike $\N$, the image set $\{P(n):n\in\N\}$ is not equidistributed in residue classes modulo $p$ for arbitrary primes $p$. 
This problem can be ameliorated for small primes $p\leqslant w$, for some parameter $w$, using the \emph{$W$-trick}. This technique, originally developed by Green \cite{Gre2005A} to solve linear equations in primes, has subsequently become a standard tool for solving Diophantine equations over sparse arithmetic sets \cite{BP2017, Cha2022,Cho2018,CLP2021,Pre2021,Sal2020}.

Let $w$ be a positive integer which is large in terms of the quantity $C$, and assume that the positive integer $X$ is large in terms of $w$.
Define
\[
M = C d^2 10^{2w}, \qquad
W = \left(\prod_{p \le w}p \right)^{100dw},
\qquad
V = \sqrt{W}.
\]
Put
\[
D = W^2,
\]
and let $N,Z \ge 1$ be given by
\begin{equation}\label{eqn4.2}
N = P_D(Z),
\qquad Z = \frac{X - r_D}{D}.
\end{equation}
We assume that $Z$ is a positive integer; we will explain in \S \ref{sec7} why we are allowed to make this assumption.

Given $A\subseteq[X]$ with $|A|\geqslant\delta X$, for $R \in \bN$ and $b \in [R]$, denote
\[
A_{b,R} = \{ x \in A: x \equiv b \mmod R \}.
\]
Writing $(H,W)_d$ to denote the largest $m \in \bN$ for which $m^d \mid (H,W)$, Lemma \ref{lemA.5} implies that
\[
\delta X \leqslant |A| \le \sum_{\substack{b \in [W]: \\ (P'(b),W)_d \le M}} |A_{b,W}| + O(10^w W M^{-1/2}\lceil X/W\rceil).
\]
Here we have made use of the trivial bound $|A_{b,W}|\leqslant \lceil X/W\rceil$ for all $b$.
Note that, since $w$ is large relative to $C$, if $(P'(b),W)_d \le M$ then $(P'(b),W) \mid V$. As $10^w M^{-1/2} \le C^{-1/2}$ and $C$ is large in terms of $\delta$, we therefore have
\[
\delta X \ll  \sum_{\substack{b \in [W]: \\ (P'(b),W) \mid V}} |A_{b,W}|,
\]
and maximising yields $b_0$ for which
\[
|A_{b_0,W}| \gg \frac{\delta X}{W}, \qquad (P'(b_0),W) \mid V.
\]

Define $\kap \in \bN$ by
\[
W \kap (P'(b_0),W) = \lam(D).
\]
By pigeonholing, there exists $b\in[W\kappa]$ with $b \equiv b_0 \mmod W$ such that
\[
|A_{b,W\kap}|
\gg \frac{\delta X}{W \kap}.
\]
As
\[
(P'(b),W) = (P'(b_0),W) \mid V,
\]
we see that
\[
(P'(b),W\kap) = (P'(b),W) = (P'(b_0),W).
\]

Set
\begin{equation*}
  \cA =\left \{ \frac{P(x)-P(b)}{\lambda(D)}: x \in A_{b,W\kappa} \right \},  
\end{equation*}
noting from the Taylor expansion that $\cA \subset \bZ$. Now, for a given colouring $[X]=\cC_1\cup\cdots\cup\cC_r$, for each $i \in [r]$ let
\begin{equation*}
    \tilde \cC_i := \{ z \in [Z]: r_D + D z \in C_i \}.
\end{equation*}
Observe that if $(\bn,\bz)\in\cA^s\times \tilde{\cC}_k^t$ satisfies the linearised equation (\ref{eqn3.7}) then $(\bx,\by)\in A^s\times\cC_k^t$ satisfies the original equation (\ref{eqn3.1}), where
\begin{equation*}
    n_{i} = \frac{P(x_i)-P(b)}{\lambda(D)} \quad (1\leqslant i\leqslant s), \qquad
    y_{j} = r_D + Dz_j \quad (1\leqslant j\leqslant t).
\end{equation*}
Moreover, by passing from the set $\{P(x):x\in\N\}$ to the set
\begin{equation*}
    \left\{ \frac{P(x)-P(b)}{\lambda(D)}: x \equiv b \mmod{W\kappa} \right \},
\end{equation*}
we have achieved our goal of equidistribution modulo all primes up to $w$. Indeed, writing $x = W\kappa y +b$, our choice of $b$ and the Taylor expansion of $P$ demonstrate that
\begin{equation*}
    \frac{P(x)-P(b)}{\lambda(D)} \equiv \left(\frac{P'(b)}{(P'(b),W)}\right)y \mmod{p}
\end{equation*}
holds for any prime $p\leqslant w$. The bracketed factor is coprime to $p$, whence, as $y$ varies over residue classes modulo $p$, the polynomial on the left-hand side equidistributes over congruence classes modulo $p$.

\subsection{Constructing the weight function}
Having resolved the problem of equidistribution, we return to the problem of handling the different scales $N$ and $Z$ in Theorem \ref{thm3.8}. To proceed we construct the following weight function.
Given $A\subseteq[X]$ with $|A|\geqslant\delta X$, let $b\in[W\kappa]$ and $\cA\subseteq\bZ$ be as defined above. Define
\begin{equation}\label{eqn4.3}
\nu=\nu_b: \bZ \to [0,\infty),
\qquad
\nu(n) =
(P'(b), W)^{-1} \sum_{\substack{x \in (b,X] \\ x \equiv b \mmod{W \kap} \\ \frac{P(x) - P(b)}{\lambda(D)} = n}}
P'(x).
\end{equation}
Observe that $\nu$ is supported on $[N]$.

\begin{lemma}[Density transfer] 
\label{lem4.1} If $X$ and $N$ are sufficiently large in terms of $w$ and the fixed parameters in (\ref{eqn4.1}), then
\[
\sum_{n \in \cA} \nu(n) \gg N.
\]
In particular, the implicit constant does not depend on $w$.
\end{lemma}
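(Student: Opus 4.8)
The plan is to estimate $\sum_{n\in\cA}\nu(n)$ by unfolding the definition of $\nu$ and $\cA$, reducing the sum to a weighted count over the progression $A_{b,W\kappa}$, and then comparing the resulting derivative-weighted sum to the cardinality $|A_{b,W\kappa}|$ using the monotonicity hypothesis (\ref{eqn3.5}) and a routine mean value (or Abel summation) argument. Concretely, since the map $x\mapsto \frac{P(x)-P(b)}{\lambda(D)}$ is a bijection from $A_{b,W\kappa}$ onto $\cA$ (injectivity holds because $P$ is strictly increasing on $[1,\infty)$ by (\ref{eqn3.5}), and $b\ge 1$), the inner sum in (\ref{eqn4.3}) collapses to a single term for each $n\in\cA$, giving
\[
\sum_{n\in\cA}\nu(n) = (P'(b),W)^{-1}\sum_{x\in A_{b,W\kappa}} P'(x).
\]
Thus it suffices to show $(P'(b),W)^{-1}\sum_{x\in A_{b,W\kappa}}P'(x)\gg N$.

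The next step is to exploit the lower bound $|A_{b,W\kappa}|\gg \delta X/(W\kappa)$ obtained in the $W$-trick together with a lower bound for $P'(x)$ on the relevant range. Here I would be slightly careful: $P'(x)$ could be small for small $x$, so I would discard the (bounded, $O(1)$-many relative to the progression, or more precisely $O(X_0/(W\kappa))$ for a fixed $X_0$) elements $x\le X_0$ where $P'$ is not yet comfortably positive and comparable to its typical size, where $X_0 = X_0(P)$ is chosen so that $P'(x)\asymp_P x^{d-1}$ for $x\ge X_0$. For $X$ large this removes only a negligible proportion of $A_{b,W\kappa}$, so we still have $\gg \delta X/(W\kappa)$ elements with $x\in(X_0,X]$, each contributing $P'(x)\gg_P x^{d-1}$. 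To extract a clean lower bound I would restrict further to $x\in(X/2,X]$: by a dyadic pigeonhole (or simply noting that the progression has $\gg X/(W\kappa)$ terms in total and the density bound lets us localise), a positive proportion of $A_{b,W\kappa}$ lies in $(X/2,X]$ — actually this needs a tiny argument, so alternatively just bound $\sum_{x\in A_{b,W\kappa}}P'(x)$ below by keeping all $x>X_0$ and using $P'(x)\gg X_0^{d-1}$ is too weak; the correct move is to note that, by Abel summation, $\sum_{x\in A_{b,W\kappa},\,x>X_0}P'(x)\gg (\min_{x>X_0}P'(x))\cdot$(something) is also too weak, so I will instead argue as follows.

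The robust argument: order the elements of $A' := \{x\in A_{b,W\kappa}: x>X_0\}$ increasingly as $x_1<x_2<\cdots<x_m$ with $m\gg \delta X/(W\kappa)$. Since consecutive elements differ by at least $W\kappa$, we have $x_i \ge X_0 + (i-1)W\kappa$, but more usefully, since all $x_i\le X$ and they are $W\kappa$-separated, at least $m/2$ of them satisfy $x_i > X_0 + (m/2)W\kappa \gg \delta X$ (as $mW\kappa\gg\delta X$), hence $P'(x_i)\gg_P (\delta X)^{d-1}$ for these. Therefore
\[
\sum_{x\in A_{b,W\kappa}}P'(x) \gg_P \frac{m}{2}\,(\delta X)^{d-1} \gg_{\delta,P} \frac{X}{W\kappa}\,X^{d-1} = \frac{X^d}{W\kappa}.
\]
Finally I would convert the right-hand side into $N$: by definition $N = P_D(Z) = P(r_D + DZ)/\lambda(D) = P(X)/\lambda(D) \asymp_P X^d/\lambda(D)$, and since $\lambda(D) = \lambda(W^2)$ while $W\kappa(P'(b_0),W) = \lambda(D)$, we get $W\kappa \le \lambda(D)$, so $X^d/(W\kappa) \gg X^d/\lambda(D) \gg_P N$; dividing by $(P'(b),W)\le V\le W$ costs only a further factor polynomial in $w$ but not in $X$, and since the claimed implied constant is permitted to depend on $w$ through the fixed parameters — wait, the statement asserts the constant does \emph{not} depend on $w$, so I must track the $w$-dependence. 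In fact $(P'(b),W)^{-1}\cdot\frac{X^d}{W\kappa} = \frac{X^d}{W\kappa(P'(b),W)} = \frac{X^d}{\lambda(D)}\asymp_P N$ exactly, with no residual $w$-dependence, which is the whole point of the normalisation $(P'(b),W)^{-1}$ in (\ref{eqn4.3}) and the factor $\kappa$ in the modulus; this clean cancellation is what makes the implied constant $w$-free.

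The main obstacle I anticipate is not any single estimate but rather the bookkeeping needed to guarantee that a positive proportion of $A_{b,W\kappa}$ lies in a range where $P'(x)\asymp_P X^{d-1}$, while simultaneously ensuring the final bound has no hidden dependence on $w$. The separation-of-elements argument above handles the first point cleanly (using only that $A_{b,W\kappa}\subseteq[X]$ and $|A_{b,W\kappa}|\gg \delta X/(W\kappa)$), and the identity $W\kappa(P'(b),W)=\lambda(D)$ together with $N\asymp_P X^d/\lambda(D)$ handles the second. Everything else — the bijection collapsing the inner sum, the lower bound for $P'$ via (\ref{eqn3.5}), the crude count $|A_{b,W\kappa}|\gg\delta X/(W\kappa)$ — is already in hand from the preceding text, so the proof should be short.
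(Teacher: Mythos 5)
Your proposal is correct and follows essentially the same route as the paper: reduce to a lower bound for $\sum_{x\in A_{b,W\kappa}}P'(x)$, show this is $\gg_{\delta,P} X^d/(W\kappa)$ using that the $W\kappa$-separated elements of $A_{b,W\kappa}$ span a range of length $\gg\delta X$ so that a positive proportion have $P'(x)\gg_P X^{d-1}$, and then invoke the identity $W\kappa\,(P'(b),W)=\lambda(D)$ together with $N=P_D(Z)=P(X)/\lambda(D)\asymp_P X^d/\lambda(D)$ to get a $w$-free constant. The paper organises the lower bound by replacing $A_{b,W\kappa}$ with the first $\asymp\delta X/(W\kappa)$ terms of the arithmetic progression (where $P'$ is smallest), while you sort $A_{b,W\kappa}$ and observe that the top half of the sorted elements exceed $cX$; these are the same estimate phrased slightly differently. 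One small slip worth flagging: your opening identity
\[
\sum_{n\in\cA}\nu(n) = (P'(b),W)^{-1}\sum_{x\in A_{b,W\kappa}}P'(x)
\]
is not quite exact, because the inner sum in the definition \eqref{eqn4.3} of $\nu$ runs over $x\in(b,X]$, whereas $A_{b,W\kappa}$ may contain $x=b$. In that case the term $x=b$ appears on the right but the corresponding $n=0$ has $\nu(0)=0$, so the identity holds only up to an additive error $O(P'(b))=O((W\kappa)^{d-1})$, which is how the paper writes it. Since this error is $\ll_w 1$ and hence negligible against $X^d/(W\kappa)$ once $X$ is large in terms of $w$, your conclusion is unaffected, but the equality sign should be replaced by an approximate one.
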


\begin{proof}
Let $c=c(\delta)$ be a suitably small, positive constant. Then
\[
\sum_{x \in A_{b,W\kap}} P'(x)
\gg \sum_{y \le \frac{cX}{W\kap}} (W\kap y + b)^{d-1} \gg (W \kap)^{d-1}
\left( \frac{X}{W\kap} \right)^d = \frac{X^d}{W\kap}.
\]
Whence, for $X$ sufficiently large, we have
\[
(P'(b), W) \sum_{n \in \cA} \nu(n) = O((W \kap)^{d-1}) + \sum_{x \in A_{b,W\kap}} P'(x)
\gg \frac{X^d}{W\kap}.
\]
From the definition of $\kappa$, we therefore conclude that
\[
\sum_{n \in \cA} \nu(n) \gg \frac{X^d}{\lambda(D)} \gg N.
\]
\end{proof}

Similarly
\[
\lVert\nu \rVert_1 \asymp N.
\]

\section{Fourier decay}\label{sec5}
Having introduced the weight function $\nu$, we study the properties of its Fourier transform $\hat{\nu}$ using the Hardy--Littlewood circle method. Throughout this section, we fix $\nu=\nu_b$ as given by (\ref{eqn4.3}), for some $b\in[W\kappa]$. The main result of this section is the Fourier decay estimate
\begin{equation} \label{eqn5.1}
\| \hat \nu - \hat 1_{[N]} \|_\infty
\ll w^{\eps-1/d} N.
\end{equation}
\begin{remark}
Although we made a judicious choice of $b$ in the previous section to establish Lemma \ref{lem4.1}, the results of this and the next section remain true for arbitrary $b$ satisfying $(P'(b),W) \mid \sqrt W$. In particular, these results do not make reference to any sets $A$ or $\cA$.
\end{remark}

For all $\alpha \in \bT$, we have
\begin{align*}
(P'(b), W) \hat \nu(\alp) &= \sum_{\substack{x \in (b,X] \\ x \equiv b \mmod{W \kap}}} P'(x) e \left(\alp 
\frac{P(x) - P(b)}{\lambda(D)}
\right) \\
&= \sum_{W \kap y + b \in (b,X]} P'(W \kap y + b) e \left(\alp 
\frac{P(W \kap y + b) - P(b)}
{\lambda(D)}
\right).
\end{align*}
For $q \in \bN$, $a \in \bZ$ and $\bet \in \bR$, write
\[
S(q,a) = \sum_{x \le q} e \left( \frac{a(P(W \kap x + b) - P(b))}{q\lambda(D)} \right),
\qquad
I(\bet) = \int_0^{N} e( \bet \gam) \d \gam.
\]

\begin{lemma} [Major arc asymptotic] \label{lem5.2} Let $q \in \bN$, $a \in \bZ$, and suppose $\| q \alp\| = |q \alp - a|$. Then
\[
\hat \nu(\alp) = q^{-1} S(q,a) I(\alp - \tfrac{a}{q}) + O(X^{d-1}(q + N \| q \alp \| )).
\]
\end{lemma}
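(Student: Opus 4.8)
The plan is to establish the major arc asymptotic for $\hat\nu(\alp)$ by the standard circle method procedure of splitting the exponential sum into arithmetic progressions modulo $q$, approximating the polynomial phase by a linear one on each progression, and recognising the resulting main term as a product of a complete exponential sum and an integral.

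\textbf{Step 1: Reduction to the summatory exponential sum.} Recall from the identity immediately preceding the lemma that
\[
(P'(b), W) \hat \nu(\alp) = \sum_{W \kap y + b \in (b,X]} P'(W \kap y + b) e \left(\alp
\frac{P(W \kap y + b) - P(b)}
{\lambda(D)}
\right).
\]
Write $Q(y) := (P(W\kap y + b) - P(b))/\lambda(D)$, a polynomial of degree $d$ with leading term of size $\asymp (W\kap)^d/\lambda(D)$, and note $P'(W\kap y + b) \ll X^{d-1}$ uniformly for $y$ in the range of summation, while the number of summands is $\asymp Z \asymp N^{1/d}$. The range $W\kap y + b \in (b,X]$ corresponds to $0 < y \le (X-b)/(W\kap)$, and $Q$ maps this interval monotonically onto an interval of length $\asymp N$.

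\textbf{Step 2: Split into residue classes and linearise the phase.} Write $y = qu + v$ with $v \in [q]$ and $u$ ranging over the appropriate interval. On each such progression, $\alp = a/q + \bet$ with $|\bet| = \|q\alp\|/q$, so
\[
e(\alp Q(y)) = e\!\left(\tfrac{a}{q} Q(qu+v)\right) e(\bet Q(qu+v)) = e\!\left(\tfrac{a Q(v)}{q}\right) e(\bet Q(qu+v)),
\]
since $Q(qu+v) \equiv Q(v) \mmod q$ (as $Q \in \bZ[y]$). The first factor depends only on $v$; for the second, I replace the summation over $u$ by an integral using partial summation (Euler--Maclaurin / the first-derivative test), treating $P'(W\kap y + b)\,e(\bet Q(y))$ as a smooth weight. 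The total derivative of the phase $\bet Q(y)$ over a step of size $q$ in $u$ is $\ll |\bet| \cdot (d/dy)Q(y) \cdot q \ll \|q\alp\| \cdot N / X \cdot$ (something), and the derivative of the amplitude $P'(W\kap y+b)$ contributes $\ll X^{d-2}\cdot(W\kap)$; collecting these, the replacement of the sum over $u$ by $q^{-1}\int$ of $P'(W\kap y+b)e(\bet Q(y))\,dy$ incurs an error $\ll X^{d-1}(1 + N\|q\alp\|)$ per residue class, hence $\ll X^{d-1}(q + N\|q\alp\|)$ in total after summing over the $q$ classes. Pulling out $q^{-1}\sum_{v\le q} e(aQ(v)/q) = q^{-1}S(q,a)$ and performing the change of variables $\gam = Q(y)$ in the integral (so $d\gam = Q'(y)\,dy$ and $Q'(y) = P'(W\kap y+b)\cdot W\kap/\lambda(D)$, with $W\kap \ll \lambda(D)$ absorbed) turns $\int P'(W\kap y+b)e(\bet Q(y))\,dy$ into $\int_0^N e(\bet\gam)\,d\gam = I(\bet)$ up to acceptable error from the boundary terms and the $(W\kap)^{d-1}$-type discrepancy already seen in Lemma~\ref{lem4.1}.

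\textbf{Step 3: Assemble and divide by $(P'(b),W)$.} Combining Steps 1--2 gives $(P'(b),W)\hat\nu(\alp) = (P'(b),W)\, q^{-1}S(q,a)I(\alp - a/q) + O((P'(b),W)X^{d-1}(q+N\|q\alp\|))$ — one must check the constant $S(q,a)$ as defined in the lemma already carries the factor $(P'(b),W)^{-1}$ implicitly through the shape of its summand, or more simply that dividing through by $(P'(b),W)$ (a bounded quantity, as $(P'(b),W)\mid V = \sqrt W$ and $w$ is fixed when we take $X$ large — though here one should keep the dependence explicit and note it is harmless since $q^{-1}S(q,a)$ as written is the normalised sum) yields exactly the claimed identity. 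The main obstacle I anticipate is bookkeeping the error term with the correct shape $X^{d-1}(q + N\|q\alp\|)$ uniformly, i.e. making sure that (i) the amplitude $P'(W\kap y+b) \asymp X^{d-1}$ is handled cleanly through partial summation rather than crudely, and (ii) the change of variables $\gam = Q(y)$ genuinely produces $I(\bet) = \int_0^N e(\bet\gam)d\gam$ on the nose, with the Jacobian cancelling the amplitude up to the stated error — this is where the precise normalisations of $S(q,a)$, $\lambda(D)$, and $N = P_D(Z)$ all have to line up, and is the only genuinely delicate point; the rest is routine application of the first-derivative test.
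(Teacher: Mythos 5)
Your proposal is correct and follows essentially the same route as the paper: split the sum over $y$ into residue classes modulo $q$, use $\cP(qu+v) \equiv \cP(v) \pmod q$ (the paper phrases this as Taylor's theorem modulo $q\lambda(D)$, which is equivalent since $\cP \in \bZ[y]$) to extract the complete sum $S(q,a)$, replace the remaining sum by an integral via Euler--Maclaurin with the stated error per class, change variables $\gam = \cP(y)$ so that the amplitude $P'(W\kap y + b)$ and the Jacobian $\cP'(y) = P'(W\kap y+b)\,W\kap/\lambda(D)$ cancel up to the exact constant $\lambda(D)/(W\kap) = (P'(b),W)$, and finally divide by $(P'(b),W) \ge 1$. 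One small expositional slip worth fixing: in Step~3 you hedge about whether $S(q,a)$ ``already carries the factor $(P'(b),W)^{-1}$ implicitly'' --- it does not; the factor $(P'(b),W)$ arises solely as the Jacobian of the change of variables in the integral, matching the normalisation on the left-hand side, and the division is clean because $(P'(b),W)$ is a positive integer so it only shrinks the error term.
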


\begin{proof} Put $\beta = \alp - \frac{a}{q}$. Breaking the sum into residue classes modulo $q$ yields
\begin{align*}
(P'(b), W) \hat \nu(\alp) = O((W \kap)^{d-1}) + \sum_{x \le q} \:
\sum_{X_0 < z \le Y_0} 
& P'(W \kap qz + W \kap x + b) \\
& e \left( \left(\frac a q + \bet \right)
\frac{P(W \kap qz + W \kap x + b) - P(b)}{\lambda(D)}
\right),
\end{align*}
where
\[
X_0 = \frac{-(W \kap x + b)}{W \kap q}, \qquad
Y_0 = \frac{X - (W \kap x +b)}{W \kap q}.
\]
Taylor's theorem yields
\[
P(W \kap qz + W \kap x + b)
\equiv P(W \kap x + b) \mmod{q\lambda(D)},
\]
so
\begin{align*}
(P'(b), W) \hat \nu(\alp) &= O((W \kap)^{d-1}) + \sum_{x \le q}
e \left( \frac{a(P(W \kap x + b) - P(b))}{q\lambda(D)} \right)
\sum_{X_0 < z \le Y_0} \phi_x(z),
\end{align*}
where
\[
\phi_x(z) = P'(W \kap qz + W \kap x + b)
e \left( \bet
\frac{P(W \kap qz + W \kap x + b) - P(b)}{\lambda(D)}
\right).
\]
By Euler--Maclaurin summation \cite[Equation (4.8)]{Vau1997}, we have
\[
\sum_{X_0 < z \le Y_0} \phi_x(z) = \int_{X_0}^{Y_0} \phi_x(z) \d z + O(X^{d-1}(1 + N|\bet|)).
\]
The change of variables 
\[
\gam = \frac{P(W \kap qz + W \kap x + b) - P(b)} {\lambda(D)}
\]
now yields
\[
\sum_{X_0 < z \le Y_0} \phi_x(z) = (P'(b),W) q^{-1} I(\bet) + O(X^{d-1}(1 + N|\bet|)),
\]
completing the proof.
\end{proof}

We have the standard bound
\begin{equation} \label{eqn5.2}
I(\bet) \ll \min \{ N, \| \bet \|^{-1} \}.
\end{equation}
Note that
\[
S(q,a) = \sum_{u \le q} e_q (a \cP(x)),
\]
where
\[
\cP(x) = \frac{ P(W \kap x + b) - P(b)}{\lambda(D)} =: \sum_{j \le d} v_j x^j \in \bZ[x].
\]

\begin{lemma} \label{lem5.3} Suppose $(q,a) = 1$. Then
\[
S(q,a) \ll q^{1 + \eps - 1/d}.
\]
Further, if $(q,W) > 1$ then $S(q,a) = 0$. Finally, if $q \ge 2$ then
$q^{-1} S(q,a) \ll w^{\eps - 1/d}$.
\end{lemma}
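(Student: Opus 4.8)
The plan is to estimate the exponential sum $S(q,a) = \sum_{x \le q} e_q(a\cP(x))$, where $\cP(x) = \sum_{j \le d} v_j x^j$ is an integer polynomial of degree $d$ with leading coefficient $v_d = (W\kap)^d \ell / \lam(D)$ for some integer $\ell$ (the leading coefficient of $P$). The central issue is that, although the coefficients $v_j$ grow with $D = W^2$, the content $\gcd(\cP - \cP(0)) = \gcd(v_1,\ldots,v_d)$ is bounded in terms of $P$ alone, by Lucier's result \cite[Lemma 28]{Luc2006}. So I would first record that $\gcd(v_1,\ldots,v_d) = O_P(1)$, and then invoke the standard Weyl-type bound for complete exponential sums: for a polynomial $g$ of degree $d$ with $\gcd(\text{non-constant coefficients of }g, q) = \gamma$, one has $\sum_{x \le q} e_q(g(x)) \ll_{d,\eps} \gamma^{1/d} q^{1+\eps-1/d}$ (e.g. the classical estimate derived from Weyl differencing plus multiplicativity, as in Vaughan \cite{Vau1997} or the bound in \cite{Luc2006} itself). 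Since $\gamma \le \gcd(v_1,\ldots,v_d) = O_P(1)$, the first assertion $S(q,a) \ll q^{1+\eps-1/d}$ follows, with the implied constant depending only on $P$ (absorbing the bounded content).

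For the second assertion — that $(q,W) > 1$ forces $S(q,a) = 0$ — I would exploit the equidistribution we engineered via the $W$-trick. Writing $p \mid (q,W)$, split $S(q,a)$ by the Chinese remainder theorem into a product over prime powers dividing $q$; it suffices to show the factor corresponding to $p$ vanishes. As computed at the end of \S\ref{sec4}, for a prime $p \le w$ we have $\cP(x) \equiv \big(P'(b)/(P'(b),W)\big) x \pmod p$, and the bracketed coefficient is a unit mod $p$; more precisely, since $W$ is divisible by a very high power of $p$ (the exponent $100dw$) while $\lam(D) \mid D^d = W^{2d}$ and $W\kap(P'(b),W) = \lam(D)$, one checks that $v_1 = (W\kap)P'(b)/\lam(D) = P'(b)/(P'(b),W)$ is coprime to $p$ whereas $v_j \equiv 0 \pmod{p^{k}}$ for all $j \ge 2$ and a large exponent $k$ (because $v_j$ carries a factor $(W\kap)^j$ and only $\lam(D) \mid W^{2d}$ is removed). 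Hence the inner sum over a residue $x$ modulo the $p$-part of $q$ is a complete sum of $e(\cdot)$ of a linear polynomial whose linear coefficient is coprime to $p$ — indeed coprime to the whole $p$-part of $q$ once $w$ is large — and such a sum vanishes. I would state this cleanly using Lemma \ref{lemA.?} on polynomial congruences from the appendix if available, or a direct computation; the key input is simply that the $p$-adic valuations of the coefficients are forced by the construction of $W$, $\kap$, $b$, and $\lam(D)$.

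For the third assertion, suppose $q \ge 2$ and write $q = q_1 q_2$ with $q_1 = \gcd(q, W^\infty)$ the $w$-smooth part and $q_2$ coprime to $W$. If $q_1 > 1$, the second assertion gives $S(q,a) = 0$ and there is nothing to prove. Otherwise $q = q_2 \ge 2$ is coprime to $W$; since $W \ge 2^{100dw}$ is enormous, $(q,W)=1$ with $q\ge 2$ already forces $q > w$ — indeed $q$ must have a prime factor exceeding $w$, so $q > w$. Combining with $S(q,a) \ll q^{1+\eps-1/d}$ gives $q^{-1}S(q,a) \ll q^{\eps-1/d} \le w^{\eps-1/d}$ (adjusting $\eps$), as desired. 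I would phrase the last step carefully so that the saving is genuinely a power of $w$ and not merely of $q$.

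The main obstacle I anticipate is the bookkeeping in the vanishing statement: tracking the exact $p$-adic valuations of $v_1, v_2, \ldots, v_d$ through the definitions $D = W^2$, $W\kap(P'(b),W) = \lam(D)$, $(P'(b),W) \mid V = \sqrt W$, and $\lam(D) \mid D^d$, and confirming that the linear term survives modulo the $p$-part of $q$ while all higher terms die, uniformly for every prime $p \le w$ dividing $q$. The Weyl bound with explicit content-dependence and the final smooth/rough factorisation are routine; it is the interplay between Lucier's bounded-content fact and the arithmetic of the $W$-trick parameters that requires care, and the reason the choice of the large exponent $100dw$ in the definition of $W$ matters.
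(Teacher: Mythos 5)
Your overall plan — factor $q = q_1 q_2$ into a $w$-smooth part and a part coprime to $W$, exploit the $p$-adic valuations of the coefficients $v_j$ of $\cP$, and observe that a nontrivial $q_2$ coprime to $W$ must exceed $w$ — matches the paper's, and your treatment of the third assertion is identical. Two issues deserve attention.

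First, the claim $\gcd(v_1,\ldots,v_d)=O_P(1)$ is correct, but it does not follow from Lucier's Lemma 28: that result concerns the auxiliary polynomial $P_D(x)=P(r_D+Dx)/\lam(D)$, whereas $\cP(x)=(P(W\kap x+b)-P(b))/\lam(D)$ has $W\kap\ne D$ in general and $b$ is chosen for density, not equal to the CRT-lift $r_D$, so $\cP$ is not a Lucier polynomial. The bound has a more elementary source sitting in the construction: $v_1=P'(b)/(P'(b),W)$ is coprime to $W$, hence to every prime $\le w$, while $v_d=\ell_P(W\kap)^{d-1}/(P'(b),W)$ is $\ell_P$ times a $w$-smooth integer, so $\gcd(v_1,v_d)\mid\ell_P$. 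Granting that, your content-dependent Weyl bound does give the first assertion. The paper's route is slightly different and uses even less: it proves $|S(q,a)|\le|S(q_2,a_2)|$ outright (the $q_1$-factor is $0$ or $1$) and then applies \cite[Theorem 7.1]{Vau1997} to $S(q_2,a_2)$, needing only $(v_d,q_2)\ll 1$.

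Second, the vanishing argument as written has a genuine gap. After CRT-splitting, the local factor at a prime $p\mid(q,W)$ with $p^e\|q$ is $\sum_{x\le p^e}e_{p^e}(a_p\cP(x))$, which you assert is a complete linear exponential sum with unit coefficient. But $\ord_p(v_j)\ge\ord_p(W)=100dw$ for $j\ge 2$ gives only $\cP(x)\equiv v_1x\pmod{p^{100dw}}$, which is insufficient when $e>100dw$ — and this case occurs, since $q$ can be as large as $Q=X^{1/100}$ with $X$ enormously larger than $W$. You flag the valuation bookkeeping as the main obstacle, but the missing step is not a sharper valuation count; it is a further split that peels off exactly the part of $q_1$ dividing $W$. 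Set $h=(q_1,W)$ and $q_1=hq'$, and parameterise $x=u_1+q'u_2$ with $u_1\in[q']$, $u_2\in[h]$. Since $h\mid W\mid v_j$ for $j\ge 2$, every cross term in $(u_1+q'u_2)^j$ vanishes modulo $q_1$, so the sum factors as $\bigl(\sum_{u_1\le q'}e_{q_1}(a_1\cP(u_1))\bigr)\bigl(\sum_{u_2\le h}e_h(a_1v_1u_2)\bigr)$. The geometric second factor vanishes whenever $h>1$ because $(h,a_1v_1)=1$, and $h=1$ forces $q_1=1$ since $q_1$ is $w$-smooth. This is the paper's route, and it avoids any case analysis on the size of $e$.
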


\begin{proof} Write $q = q_1 q_2$, where $q_1$ is $w$-smooth and $(q_2, W) = 1$.
Then
\begin{align*}
S(q,a) &= \sum_{u_1 \le q_1} \sum_{u_2 \le q_2} e_{q_1 q_2} \left(a \cP(q_2 u_1 + q_1 u_2)\right)
= \sum_{u_1 \le q_1} \sum_{u_2 \le q_2} e_{q_1 q_2} \left(a \sum_{j \le d} v_j (q_2 u_1 + q_1 u_2)^j \right) \\
&= \sum_{u_1 \le q_1} \sum_{u_2 \le q_2} e_{q_1 q_2} \left(a \sum_{j \le d} v_j ((q_2 u_1)^j + (q_1 u_2)^j) \right) = S(q_1,a_1) S(q_2,a_2),
\end{align*}
where
\[
q_2 a_1 \equiv a \mmod{q_1},
\qquad
q_1 a_2 \equiv a \mmod{q_2}.
\]

Put
\[
h = (q_1,W), \qquad q_1 = hq', \qquad W = hW'.
\]
Then
\begin{align*}
S(q_1,a_1) &= \sum_{u_1 \le q'} \sum_{u_2 \le h} e_{hq'}(a_1 \cP(u_1 + q' u_2))
= \sum_{u_1 \le q'} \sum_{u_2 \le h} e_{hq'} \left(a_1 \sum_{j \le d} v_j (u_1 + q' u_2)^j \right).
\end{align*}
As
\[
W \mid v_j \qquad (2 \le j \le d),
\]
we have
\[
S(q_1,a_1) = \sum_{u_1 \le q'} e_{q_1}(a_1 \cP(u_1)) \sum_{u_2 \le h} e_h(a_1 v_1 u_2).
\]
Observe that $v_1 = \frac{P'(b)}{(P'(b),W)}$, and recall that $(P'(b),W) \mid V = \sqrt W$. For each prime $p \le w$, we have $\ord_p(P'(b)) < \ord_p(W)$, so $\ord_p(v_1) = 0$. Thus $(v_1,W) = 1$, and in particular $(h,a_1v_1) = 1$. Hence
\[
S(q_1,a_1) = 
\begin{cases}
1, &\text{if }q_1 = 1 \\
0, &\text{if } q_1 \ne 1.
\end{cases}
\]
This shows that if $(q,W) > 1$ then $S(q,a) = 0$.

Next, we estimate
\[
S(q_2,a_2) = \sum_{x \le q_2} e_{q_2} \left(a_2 \sum_{j \le d} v_j x^j \right).
\]
The binomial theorem tells us that
\[
v_d = \frac{\ell_P (W \kap)^d}{\lam(D)} 
= \frac{\ell_P (W \kap)^{d-1}}{(P'(b),W)},
\]
where $\ell_P$ is the leading coefficient of $P$. As $(q_2,W) = 1$, we have in particular $(v_d,q_2) \ll 1$. Thus, by periodicity and \cite[Theorem 7.1]{Vau1997}, we have
\[
|S(q,a)| \le |S(q_2,a_2)| \ll q_2^{1+\eps-1/d} \le q^{1+\eps-1/d}.
\]

If $q \ge 2$ and $S(q,a) \ne 0$ then $q_1 = 1$ and $q_2 \ge 2$, whereupon $q_2 > w$ and
\[
q^{-1} S(q,a) = q_2^{-1} S(q_2,a_2) \ll q_2^{\eps-1/d} < w^{\eps-1/d}.
\]
\end{proof}

We establish \eqref{eqn5.1} using the circle method. Put $\tau = 1/100$ and $Q = X^\tau$. For coprime $q,a \in \bZ$ such that $0 \le a \le q \le Q$, define
\[
\fM(q,a) =  \{ \alp \in \bT: |\alp - a/q| \le Q/N \}.
\]
Let $\fM$ be the union of the sets $\fM(q,a)$, and put $\fm = \bT \setminus \fM$.

First suppose $\alp \in \fm$. By Dirichlet's approximation theorem (see \cite[Lemma 2.1]{Vau1997}), there exist coprime $q \in \bN$ and $a \in \bZ$ such that $q \le Q$ and $|\alp - a/q| \le (q Q)^{-1}$. As $\alp \in \fm$, we must also have $|q\alp - a| > q Q/N$, so
\[
\hat 1_{[N]}(\alp) \ll \| \alp \|^{-1} \le \frac{q}{\| q \alp \|} = \frac{q}{|q \alp - a|} < \frac{N}{Q}.
\]
By partial summation, we have
\[
(P'(b),W) \hat \nu(\alp) \ll X^{d-1} \sup_{Y \le X/(W\kap)}
\left| \sum_{y \le Y} e(\alp \cP(y)) \right|.
\]
By Dirichlet's approximation theorem, there exist coprime $v \in \bN$ and $b \in \bZ$ such that $v \le N/Q$ and $|\alp - b/v| \le Q/(vN)$. As $\alp \in \fm$, we must have $v > Q$. As $v_d \ll_w 1$, Weyl's inequality in the form \cite[Proposition 4.14]{Ove2014} yields
\[
\sum_{y \le Y} e(\alp \cP(y)) \ll_w
Y^{1+\eps}(Y^{-1} + v^{-1} + vY^{-d})^{2^{1-d}} \ll X^{1+\eps-\tau 2^{1-d}}.
\]
As $X$ is large in terms of $w$, we thus have
\[
\hat \nu(\alp) \ll N^{1+\eps-2^{1-d}/(100d)} \ll w^{\eps-1/d}N,
\]
wherein the implied constants do not depend on $w$, and hence
\begin{equation} \label{eqn5.3}
\hat \nu(\alp) - \hat 1_{[N]}(\alp) \ll w^{\eps-1/d} N.
\end{equation}

Next, suppose $a \in \{0,1\}$ and $\alp \in \fM(1,a)$. Then, by Lemma \ref{lem5.2}, we have 
\[
\hat \nu(\alp) = I(\alp) + O(X^{d-1+\tau}).
\]
Euler--Maclaurin summation yields
\[
I(\alp) - \hat 1_{[N]}(\alp) \ll 1 + N \| \alp \| \ll Q,
\]
so by the triangle inequality
\[
\nu(\alp)  - \hat 1_{[N]}(\alp) \ll  X^{d-1 + \tau} \ll w^{\eps-1/d} N.
\]

Finally, suppose $\alp \in \fM(q,a)$ with $q \ge 2$. Then $\| \alp \| \ge q^{-1} - |\alp - a/q| \gg q^{-1}$, so
\[
\hat 1_{[N]}(\alp) \ll \| \alp \|^{-1} \ll Q.
\]
By Lemmas \ref{lem5.2} and \ref{lem5.3}, as well as \eqref{eqn5.2}, we have
\[
\hat \nu(\alp) \ll w^{\eps-1/d} N.
\]
Thus, we again have \eqref{eqn5.3}. We have secured \eqref{eqn5.3} in all cases, completing the proof of \eqref{eqn5.1}.

We record, for later use, the following bounds that arose above.

\begin{lemma} \label{lem5.4} For $\tau=1/100$, we have
\[
\hat \nu(\alp) \ll 
N^{1+\eps-2^{1-d}/(100d)}
\qquad (\alp \in \fm)
\]
and
\[
\hat \nu(\alp) \ll 
q^{\eps-1/d} \min \{N, \| \alp - a/q \|^{-1} \} + X^{d-1+\tau}
\qquad (\alp \in \fM(q,a) \subset \fM).
\]
\end{lemma}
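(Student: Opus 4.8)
The plan is to observe that both inequalities were, in substance, already established in the course of proving \eqref{eqn5.1}; the lemma simply isolates them for later reference, so its proof amounts to bookkeeping. I would treat the minor and major arcs in turn.

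For $\alp \in \fm$, I would retrace the minor-arc analysis from the proof of \eqref{eqn5.1}. Partial summation bounds $(P'(b),W)\hat\nu(\alp)$ by $X^{d-1}\sup_{Y \le X/(W\kap)}\bigl|\sum_{y \le Y} e(\alp\cP(y))\bigr|$; Dirichlet's approximation theorem then produces a rational approximation to $\alp$ with denominator $v > Q$ (because $\alp$ lies on no major arc); and Weyl's inequality in the form \cite[Proposition 4.14]{Ove2014}, applied with the leading coefficient $v_d \ll_w 1$ of $\cP$, yields $\sum_{y \le Y} e(\alp\cP(y)) \ll_w X^{1 + \eps - \tau 2^{1-d}}$, hence $\hat\nu(\alp) \ll N^{1 + \eps - 2^{1-d}/(100d)}$. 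Since $X$, and therefore $N$, is large in terms of $w$, the $w$-dependence of the implied constant is absorbed into $N^\eps$. This is exactly the first claimed bound.

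For $\alp \in \fM(q,a)$ with $(q,a)=1$ and $q \le Q$, I would feed the major-arc asymptotic
\[
\hat\nu(\alp) = q^{-1}S(q,a)\,I\bigl(\alp - \tfrac aq\bigr) + O\bigl(X^{d-1}(q + N\|q\alp\|)\bigr)
\]
of Lemma \ref{lem5.2} into the three standard estimates already at hand. The bound $|I(\alp - a/q)| \ll \min\{N, \|\alp - a/q\|^{-1}\}$ is \eqref{eqn5.2}, and $|q^{-1}S(q,a)| \ll q^{\eps - 1/d}$ holds trivially for $q = 1$ (where $S(1,a) = 1$) and follows from the estimate $S(q,a) \ll q^{1 + \eps - 1/d}$ of Lemma \ref{lem5.3} for $q \ge 2$, so the main term is $\ll q^{\eps - 1/d}\min\{N, \|\alp - a/q\|^{-1}\}$. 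For the error term, the major-arc constraints $q \le Q = X^\tau$ and $|\alp - a/q| \le Q/N$ give $q + N\|q\alp\| \ll X^{2\tau}$, so the $O$-term is $\ll X^{d-1 + 2\tau}$, which is of the recorded shape $X^{d-1 + \tau}$ up to the precise value of the small exponent (and is in any case negligible against $N$). Combining these yields the second bound.

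I do not foresee a genuine obstacle, since all the required inputs are already in place; the only points meriting a moment's care are that the $q = 1$ case of $|q^{-1}S(q,a)| \ll q^{\eps - 1/d}$ is vacuously true, and that the $w$-dependence of the Weyl-inequality constant is swallowed by $N^\eps$ because $X$ is chosen large relative to $w$ at the outset of \S\ref{sec4}.
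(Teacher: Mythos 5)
Your proposal is correct and matches the paper's intent exactly: Lemma \ref{lem5.4} is introduced with the words ``We record, for later use, the following bounds that arose above,'' so the proof is precisely the bookkeeping you describe --- partial summation plus Dirichlet plus Weyl on $\fm$, and Lemma \ref{lem5.2} combined with Lemma \ref{lem5.3} and \eqref{eqn5.2} on $\fM(q,a)$. Your side remark is also accurate: for $q\ge 2$ the error term comes out as $X^{d-1+2\tau}$ rather than $X^{d-1+\tau}$ (the paper only verifies the exponent $\tau$ in the $q=1$ case), but since either bound is $O(N^{1-c})$ for some $c>0$, this is immaterial for the epsilon-removal application in \S\ref{sec6}.
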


\section{Restriction estimates}\label{sec6}

Continuing our study of $\nu=\nu_b$ for fixed $b$, in this section we establish restriction estimates for $\nu$. We also obtain restriction estimates for a related weight function $\mu_D$ corresponding to the auxiliary polynomial $P_D$.

\subsection{Restriction for \texorpdfstring{$\nu$}{nu}}

Recall that $T=T(d)\in\N$ is as defined in \S\ref{sec3}.

\begin{lemma} \label{lem6.1} Let $E > 2T$ be real, and let $\phi: \bZ \to \bC$ with $|\phi| \le \nu$. Then
\[
\int_\bT |\hat \phi(\alp)|^E \d \alp \ll_E N^{E-1}.
\]
\end{lemma}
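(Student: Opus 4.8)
The plan is to deduce the restriction estimate from the Fourier decay and pointwise bounds for $\hat\nu$ recorded in Lemma \ref{lem5.4}, using the standard Bourgain-style argument (as in \cite[Part 2]{CLP2021}). First, by orthogonality and nonnegativity it suffices to treat $\phi = \nu$ when $E = 2T$ is an integer: indeed, $\|\hat\nu\|_\infty \ll N$ (from \eqref{eqn5.1}, or directly from $\|\nu\|_1 \asymp N$), and an interpolation/monotonicity argument upgrades the integer moment $E = 2T$ to all real $E > 2T$ at the cost of absorbing a factor $N^{E-2T}$; since $|\hat\phi| \le \hat\nu$ is false in general, one instead notes $|\hat\phi(\alp)| \le \widehat{|\phi|}(\alp) \le \hat\nu(\alp)$ only when $\phi \ge 0$, so the genuinely correct move is to invoke the Tomas--Stein-type duality: $\int_\bT |\hat\phi|^E \ll N^{E-1}$ for all $|\phi| \le \nu$ is equivalent to the ``dual restriction'' bound $\|(\psi\,\d\nu)^{\vee}\|_{L^{E'}}$-type inequality, and this is what the circle-method input furnishes uniformly.

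Concretely, the key steps are as follows. \emph{Step 1 (major/minor split).} Split $\bT = \fM \cup \fm$ with $\fM$, $\fm$ as in \S\ref{sec5}. On the minor arcs, Lemma \ref{lem5.4} gives $\hat\nu(\alp) \ll N^{1+\eps - 2^{1-d}/(100d)}$, so
\[
\int_{\fm} |\hat\phi(\alp)|^E \d\alp \le \Big(\sup_{\fm}|\hat\nu|\Big)^{E-2T} \int_\bT |\hat\phi(\alp)|^{2T}\d\alp \ll N^{(E-2T)(1-c)} \int_\bT |\hat\phi|^{2T},
\]
and by orthogonality $\int_\bT |\hat\phi|^{2T} \le \int_\bT |\hat\nu|^{2T}$ counts weighted solutions to $\cP(x_1)+\cdots+\cP(x_T) = \cP(x_{T+1})+\cdots+\cP(x_{2T})$ with weights $P'(\cdot) \ll X^{d-1}$; the definition of $T=T(d)$ via \eqref{eqn3.4}, after rescaling $x \mapsto W\kap y + b$ and converting the $P'$-weights into a clean count using Euler--Maclaurin (equivalently, partial summation to pass from the smooth sum $\sum P'(y) e(\alp\cP(y))$ to $N \sup_Y |\sum_{y\le Y} e(\alp \cP(y))|$), yields $\int_\bT |\hat\nu|^{2T} \ll N^{2T}\cdot N^{-1+\eps}\cdot(\text{scaling factors}) \ll N^{2T-1+\eps}$. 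Choosing $\eps$ small against $c(E-2T)$ closes the minor-arc contribution.

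\emph{Step 2 (major arcs).} On $\fM(q,a)$, Lemma \ref{lem5.4} gives $\hat\nu(\alp) \ll q^{\eps-1/d}\min\{N, \|\alp - a/q\|^{-1}\} + X^{d-1+\tau}$. The error term $X^{d-1+\tau}$ contributes $\ll |\fM|\, X^{E(d-1+\tau)} \ll Q^3 N^{-1} \cdot X^{E(d-1+\tau)}$, which is $\ll N^{E-1}$ since $d-1+\tau < d = \deg$ gives $X^{d-1+\tau} \ll N^{1-c'}$ and $E > 2T \ge 2d \gg 1$. For the main term, sum over arcs:
\[
\int_{\fM} \Big(q^{\eps-1/d}\min\{N,\|\alp-a/q\|^{-1}\}\Big)^E \d\alp \ll \sum_{q \le Q} q^{E(\eps - 1/d)} \sum_{\substack{a \le q \\ (a,q)=1}} \int_{|\bet| \le Q/N} \min\{N,|\bet|^{-1}\}^E \d\bet.
\]
The inner integral is $\ll N^{E-1}$ (since $E > 1$), the $a$-sum contributes $\le q$, and $\sum_{q} q^{1 + E(\eps-1/d)}$ converges because $E/d > 2T/d \ge 2 > 1$ (so $1 + E(\eps - 1/d) < -1$ for small $\eps$, indeed we only need it $< -1$, which holds as $E > 2d$). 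Hence the major arcs contribute $\ll N^{E-1}$ as well, and adding the two ranges completes the proof.

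The main obstacle I anticipate is \emph{Step 1}: correctly bookkeeping the passage from the weighted exponential sum $(P'(b),W)\hat\nu(\alp) = \sum_{x} P'(x) e(\alp (P(x)-P(b))/\lam(D))$ to the unweighted Weyl sum for $\cP$, keeping all the $W$, $\kap$, $\lam(D)$ scaling factors tracked so that the final bound $\int_\bT |\hat\nu|^{2T} \ll N^{2T-1+\eps}$ has an implied constant independent of $w$ — this is exactly the uniformity issue flagged in the introduction, and the resolution relies on Lucier's bound on $\gcd(P_D(X)-P_D(0))$ feeding into $v_d \ll_w 1$ (Lemma \ref{lem5.3}) rather than a $D$-dependent constant. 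Everything else is the routine $\fM$/$\fm$ dissection.
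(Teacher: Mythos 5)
Your proposal contains a genuine gap: in both Step 1 and Step 2 you control $|\hat\phi(\alp)|$ pointwise by the major/minor arc bounds for $|\hat\nu(\alp)|$ from Lemma \ref{lem5.4}, and this is not valid for general $\phi$ with $|\phi|\le\nu$. For instance, taking $\phi(n)=\nu(n)e(-\alp_0 n)$ with $\alp_0\in\fm$ gives $|\phi|=\nu$ but $|\hat\phi(\alp_0)|=\|\nu\|_1\asymp N$, whereas $\sup_{\fm}|\hat\nu|\ll N^{1+\eps-2^{1-d}/(100d)}$; so the inequality $\int_{\fm}|\hat\phi|^E\le(\sup_{\fm}|\hat\nu|)^{E-2T}\int_{\bT}|\hat\phi|^{2T}$ fails, and likewise the major-arc computation in Step 2 only bounds $\int_{\fM}|\hat\nu|^E$, not $\int_{\fM}|\hat\phi|^E$. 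You flag this very issue in your opening paragraph ("$|\hat\phi|\le\hat\nu$ is false in general") and gesture at a Tomas--Stein/Bourgain-style resolution, but the concrete steps you then write down do not implement it: as written, your argument proves the lemma only for $\phi=\nu$, whereas the general case is exactly what is needed downstream (e.g.\ in Lemma \ref{lem7.1} and the transference step, where $\phi$ is $\nu 1_{\cA}$ or a telescoping difference).

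The parts of your argument that survive are the even-moment estimates: by orthogonality and the triangle inequality, $\int_{\bT}|\hat\phi|^{2T}\ll \|\phi\|_\infty^{2T}\cdot X^{2T-d+\eps}\ll N^{2T-1+\eps}$ does hold for every majorized $\phi$, and the trivial bound $\|\hat\phi\|_\infty\le\|\nu\|_1\ll N$ upgrades this to $\int_{\bT}|\hat\phi|^{u}\ll N^{u-1+\eps}$ for $u=(2T+E)/2$. The missing ingredient is an epsilon-removal step converting this almost-sharp moment bound into the sharp bound at the higher exponent $E$; this is what the paper does by feeding the $u$-th moment estimate together with the pointwise information on $\hat\nu$ from Lemma \ref{lem5.4} into the general epsilon-removal lemma \cite[Lemma 25]{Sal2020}. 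That lemma works with the large spectrum $\{\alp:|\hat\phi(\alp)|>\lam N\}$ and uses the arc structure of $\hat\nu$ (not of $\hat\phi$) to bound its measure, which is precisely how one circumvents the false pointwise domination you rely on. To repair your proof, replace Steps 1 and 2 by an appeal to (or a self-contained proof of) such an epsilon-removal statement.
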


\begin{proof}
Note that $\lVert \phi\rVert_{\infty}\leqslant\lVert\nu\rVert_{\infty}\ll X^{d-1}$. Hence, by orthogonality and the triangle inequality, we have
\begin{align*}
\int_\bT |\hat \phi(\alp)|^{2T} \d \alp &= \sum_{n_1 + \cdots + n_T = n_{T+1} + \cdots + n_{2T}} \phi(n_1) \cdots \phi(n_T) \overline{\phi(n_{T+1}) \cdots \phi(n_{2T})} \\
&\leqslant \lVert \phi\rVert_{\infty}^{2T} \sum_{\substack{x_1,\ldots,x_{2T} \in [X] \\ P(x_1) + \cdots + P(x_T) = P(x_{T+1}) + \cdots + P(x_{2T})}}1
\\ &\ll X^{2T(d-1)+2T-d+\eps}
\ll N^{2T-1+\eps}.
\end{align*}
Let $u = (2T + E)/2$, in order to be sure that $u > 2d$. As
\[
\| \hat \phi \|_\infty
\le \| \nu \|_1 \ll N,
\]
we have
\[
\int_\bT |\hat \phi(\alp)|^u \d \alp \ll N^{u-1+\eps}.
\]
To complete the proof, we insert this almost-sharp moment estimate, together with the ingredients in Lemma \ref{lem5.4}, into the general epsilon-removal lemma \cite[Lemma 25]{Sal2020}.
\end{proof}

\begin{lemma}
\label{lem6.2}
Let $E > 2T$ be real, and let $\phi: \bZ \to \bC$ with $|\phi| \le \nu + 1_{[N]}$. Then
\[
\int_\bT |\hat \phi(\alp)|^E \d \alp \ll_E N^{E-1}.
\]
\end{lemma}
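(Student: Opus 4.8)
The plan is to reduce Lemma \ref{lem6.2} to Lemma \ref{lem6.1} by splitting $\phi$ into the part dominated by $\nu$ and the part dominated by $1_{[N]}$, and then invoking the triangle inequality in $L^E$. Concretely, since $|\phi| \le \nu + 1_{[N]}$ pointwise, I would write $\phi = \phi_1 + \phi_2$ where $\phi_1 = \phi \cdot 1_{\{|\phi| \le \nu\}}$ and, on the complementary support, $\phi_2 = \phi - \phi_1$; a cleaner route is simply to set $\phi_1(n) = \phi(n) \min\{1, \nu(n)/|\phi(n)|\}$ (interpreted as $0$ when $\phi(n)=0$) and $\phi_2 = \phi - \phi_1$, so that $|\phi_1| \le \nu$ and $|\phi_2| \le 1_{[N]}$ hold pointwise. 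Then by Minkowski's inequality in $L^E(\bT)$,
\[
\| \hat \phi \|_E \le \| \hat \phi_1 \|_E + \| \hat \phi_2 \|_E.
\]
Lemma \ref{lem6.1} applied to $\phi_1$ gives $\| \hat \phi_1 \|_E^E \ll_E N^{E-1}$, i.e. $\| \hat \phi_1 \|_E \ll_E N^{1-1/E}$.

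For the second term, I need the same bound for $\phi_2$ with $|\phi_2| \le 1_{[N]}$. This is a classical restriction-type estimate for the interval $[N]$: by the Hausdorff--Young inequality (for $E \ge 2$) one has $\| \hat \phi_2 \|_E \le \| \hat \phi_2 \|_\infty^{1-2/E} \| \hat \phi_2 \|_2^{2/E} \le \| \phi_2 \|_1^{1-2/E} \| \phi_2 \|_2^{2/E} \le N^{1-2/E} \cdot N^{1/E} = N^{1-1/E}$, using $\| \phi_2 \|_1 \le N$ and Parseval $\| \hat \phi_2 \|_2^2 = \| \phi_2 \|_2^2 \le N$. Alternatively, and perhaps more in keeping with the paper's framing, one could note that $1_{[N]}$ itself satisfies the hypotheses needed to run the argument of Lemma \ref{lem6.1} — it trivially satisfies a sharp $2T$-moment bound by \eqref{eqn3.4} and the analogous major/minor arc decay, so the epsilon-removal lemma applies directly. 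Either way, $\| \hat \phi_2 \|_E^E \ll_E N^{E-1}$.

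Combining, $\| \hat \phi \|_E \ll_E N^{1-1/E}$, which is exactly $\int_\bT |\hat\phi(\alp)|^E \d\alp \ll_E N^{E-1}$, as claimed.

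\textbf{Main obstacle.} There is no serious obstacle here; the lemma is a routine corollary of Lemma \ref{lem6.1}. The only point requiring a line of care is the pointwise splitting $\phi = \phi_1 + \phi_2$ with $|\phi_1| \le \nu$ and $|\phi_2| \le 1_{[N]}$ — one must check this is possible whenever $|\phi| \le \nu + 1_{[N]}$, which follows from the elementary fact that if $0 \le |c| \le a + b$ with $a, b \ge 0$ then $c = c_1 + c_2$ with $|c_1| \le a$, $|c_2| \le b$ (take $c_1 = c \cdot \min\{1, a/|c|\}$). The restriction bound for $1_{[N]}$ is standard (Hausdorff--Young, or directly the epsilon-removal machinery already cited), so the whole proof is short.
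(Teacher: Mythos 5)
Your proposal is correct and follows essentially the same route as the paper: the same pointwise decomposition $\phi=\phi_1+\phi_2$ with $|\phi_1|\le\nu$ and $|\phi_2|\le 1_{[N]}$, Lemma \ref{lem6.1} for $\phi_1$, an elementary bound for $\phi_2$, and the triangle inequality to combine. The only cosmetic difference is that the paper bounds $\int_\bT|\hat\phi_2|^E$ by $N^{E-2T}$ times the $2T$-th moment (counted via the linear equation $n_1+\cdots+n_T=n_{T+1}+\cdots+n_{2T}$), whereas you use the $L^\infty$--$L^2$ bound $\int_\bT|\hat\phi_2|^E\le\|\hat\phi_2\|_\infty^{E-2}\|\hat\phi_2\|_2^2\le N^{E-1}$; both are valid and give the same conclusion.
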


\begin{proof} We decompose $\phi = \phi_1 + \phi_2$, where $|\phi_1| \le \nu$ and $|\phi_2| \le 1_{[N]}$. Then
\begin{align*}
\int_\bT |\hat \phi_2(\alp)|^E \d \alp 
&\le N^{E-2T} \int_\bT |\hat \phi_2(\alp)|^{2T} \d \alp 
\\ &\le N^{E-2T} \sum_{n_1 + \cdots + n_T = n_{T+1} + \cdots + n_{2T}} \phi_2(n_1) \cdots \phi_2(n_T) \overline{\phi_2(n_{T+1}) \cdots \phi_2(n_{2T})}
\\ & \le N^{E-1}.
\end{align*}
By Lemma \ref{lem6.1} and the triangle inequality, we thus have
\[
\int_\bT |\hat \phi(\alp)|^E \d \alp \ll_E
\int_\bT |\hat \phi_1(\alp)|^E \d \alp
+ \int_\bT |\hat \phi_2(\alp)|^E \d \alp
\ll_E N^{E-1}.
\]
\end{proof}

\subsection{Restriction for \texorpdfstring{$P_D$}{PD}}

In this subsection, we fix some $D\in\N$, and let $N,Z\in\N$ be as in (\ref{eqn4.2}), for some $X$ which is sufficiently large relative to $P$ and $D$. Recalling (\ref{eqn3.5}), define
\begin{equation}\label{eqn6.1}
  \mu_D: \bZ \to \bC,
\qquad \mu_D(n) = \frac{N}{Z} \sum_{\substack{z \le Z \\ P_D(z) = n}} 1
=\frac{N}{Z}1_{P_{D}([Z])}(n).
\end{equation}
Note that $\mu_D$ is supported on $[P_{D}(Z)]=[N]$, and
that
\[
\| \mu_D \|_1 = N,
\qquad
\hat \mu_D(\alp) = \frac{N}{Z}\sum_{z \le Z} e(\alp P_D(z)).
\]
The purpose of this subsection is to establish the following restriction estimate for $\mu_D$, which is analogous to the bound obtained in Lemma \ref{lem6.1} for $\nu$.

\begin{lemma} \label{lem6.3} Let $E > 2T$ be real, and let $\phi: \bZ \to \bC$ with $|\phi| \le \mu_D$. Then
\[
\int_\bT |\hat \phi(\alp)|^E \d \alp \ll_E N^{E-1}.
\]
\end{lemma}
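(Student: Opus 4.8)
The plan is to follow the same strategy as in the proof of Lemma \ref{lem6.1}, namely: establish an almost-sharp $L^{2T}$ (or slightly higher) moment bound for $\hat\mu_D$, upgrade it to an exponent $u$ just above $2d$ via the trivial $L^\infty$ bound $\|\hat\mu_D\|_\infty \le \|\mu_D\|_1 \ll N$, and then feed everything into the epsilon-removal lemma \cite[Lemma 25]{Sal2020}. Since $|\hat\phi| \le \hat{|\phi|} \cdot$ (more precisely $|\hat\phi(\alp)| \le \widehat{\mu_D}$ pointwise is false, but $\int_\bT |\hat\phi|^E \le \int_\bT |\hat\mu_D|^E$ by the nonnegativity of $\mu_D$ and orthogonality, exactly as in Lemma \ref{lem6.1}), it suffices to prove the bound for $\phi = \mu_D$ itself, or rather to run the orthogonality argument with $\phi$ directly as was done there.

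First I would compute the $L^{2T}$ moment. By orthogonality,
\[
\int_\bT |\hat\phi(\alp)|^{2T}\d\alp \le \left(\frac{N}{Z}\right)^{2T}\|1_{P_D([Z])}\|_\infty^{\,0}\cdot \#\{\bz \in [Z]^{2T}: P_D(z_1)+\cdots+P_D(z_T) = P_D(z_{T+1})+\cdots+P_D(z_{2T})\}.
\]
By the definition of $T = T(d)$ in \S\ref{sec3} (equation (\ref{eqn3.2})), applied to the degree-$d$ integer polynomial $P_D$, this count is $O_{P_D}(Z^{2T-d+\eps})$. Since $N = P_D(Z) \asymp_{P_D} Z^d$, this gives $\int_\bT |\hat\phi|^{2T} \ll_{P_D} (N/Z)^{2T} Z^{2T-d+\eps} \asymp N^{2T-1+\eps}$, with the implied constant a priori depending on $D$ through $P_D$.

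The main obstacle — and the point of the remark about Lucier's bound on $\gcd(P_D(X)-P_D(0))$ — is that the implied constant must not depend on $D$. Two places need care. (i) The moment bound above: the constant in $\#\{\ldots\} \ll Z^{2T-d+\eps}$ a priori depends on the coefficients of $P_D$, which grow with $D$; one removes this dependence either by the remark following Lemma \ref{lem3.2} (taking $X$, hence $Z$, large enough that $Z^\eps$ absorbs it) or, more robustly, by the circle-method input of \S\ref{sec5} adapted to $P_D$, where the relevant minor-arc Weyl bound and major-arc analysis only see the leading coefficient and the $\gcd$ of the lower coefficients, both controlled uniformly in $D$ by Lucier \cite[Lemma 28]{Luc2006}. (ii) The epsilon-removal lemma requires a Fourier-decay / major-arc hypothesis of the shape in Lemma \ref{lem5.4}; one must verify the analogue for $\hat\mu_D$, i.e. a minor-arc bound $\hat\mu_D(\alp) \ll N^{1+\eps - c(d)}$ and a major-arc expansion $\hat\mu_D(\alp) \ll q^{\eps-1/d}\min\{N, \|\alp - a/q\|^{-1}\} + (\text{error})$, with all constants independent of $D$. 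This is precisely Weyl's inequality and the standard major-arc asymptotic for $\sum_{z\le Z} e(\alp P_D(z))$, and the uniformity in $D$ follows because these estimates depend only on $d$, the leading coefficient $\ell_{P_D} = \ell_P (D^d/\lam(D))$ — which by (\ref{eqn3.8}) lies between $1$ and $\ell_P$ in a controlled way — and $\gcd(P_D(X)-P_D(0))$, uniformly bounded by Lucier.

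Having assembled (i) the almost-sharp $L^u$ moment for some $u > 2d$ (obtained by interpolating the $L^{2T}$ bound against $\|\hat\phi\|_\infty \le \|\mu_D\|_1 = N$, using $T \ge d$ so that $2T > 2d$ is not automatic but $u = (2T+E)/2 > 2d$ can be arranged as in Lemma \ref{lem6.1}) and (ii) the minor/major arc estimates with $D$-uniform constants, I would conclude by a direct application of \cite[Lemma 25]{Sal2020}, exactly as in the proof of Lemma \ref{lem6.1}, to obtain $\int_\bT |\hat\phi(\alp)|^E\d\alp \ll_E N^{E-1}$ for all real $E > 2T$, with the implied constant depending only on $E$, $d$, and $P$ (not on $D$). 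Thus the only genuinely new work compared to Lemma \ref{lem6.1} is checking the $D$-uniformity of the circle-method inputs for the phase $P_D$, which is supplied by Lucier's gcd bound.
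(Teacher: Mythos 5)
Your overall strategy---an almost-sharp $L^{2T}$ moment via orthogonality, interpolation against $\|\hat\phi\|_\infty \le \|\mu_D\|_1 = N$ up to the exponent $u=(2T+E)/2>2d$, minor- and major-arc bounds for $\hat\mu_D$ that are uniform in $D$, and then the epsilon-removal lemma \cite[Lemma 25]{Sal2020}---is exactly the paper's. But one of your stated inputs is false and would make the final step fail as written: for the \emph{unweighted} counting measure $\mu_D$, the major-arc oscillatory integral $I_D(\bet)=\frac{N}{Z}\int_0^Z e(\bet P_D(z))\,\d z$ decays only like $N(1+N\|\bet\|)^{-1/d}$ (this is Lemma \ref{lem6.6}, via \cite[Theorem 7.3]{Vau1997}), not like $\min\{N,\|\bet\|^{-1}\}$ as you claim. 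The stronger decay holds for $\nu$ only because of the derivative weight $P'(x)$ in \eqref{eqn4.3}, which lets the substitution $\gam=\cP(x)$ flatten the phase. Consequently the third hypothesis of \cite[Lemma 25]{Sal2020} is not literally satisfied by $\hat\mu_D$, and one must observe---as the paper does---that the proof of that lemma goes through with $\|\alp-a/q\|^{\kap}$ in place of $\|\alp-a/q\|$, applied here with $\kap=d^{-1}-\eps$. Relatedly, your assertion that the leading coefficient $\ell_P D^d/\lam(D)$ of $P_D$ ``lies between $1$ and $\ell_P$'' is backwards: \eqref{eqn3.8} gives $1\le D^d/\lam(D)\le D^{d-1}$, so it grows with $D$. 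The actual sources of $D$-uniformity are Lucier's bound $\gcd(P_D-P_D(0))\ll_P 1$ for the complete sums $S_D(q,a)$, the identity $(ZD)^d/\lam(D)\asymp N$ for $I_D$, and absorption of explicit powers of $D$ into $N^\eps$ on the minor arcs, which is legitimate because the threshold for $Z$ may depend on $D$ even though the final constant may not.

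On the $L^{2T}$ moment: rather than applying the definition of $T(d)$ directly to $P_D$ and then absorbing an unquantified constant $O_{P_D}(1)$, the paper lifts each solution $\bz\in[Z]^{2T}$ of the $P_D$-equation to the solution $\bx=r_D+D\bz\in[X]^{2T}$ of the $P$-equation (using $P_D(z)=P(r_D+Dz)/\lam(D)$), so the implied constant is the one attached to the \emph{fixed} polynomial $P$ and the only $D$-dependence is the explicit factor $D^{2T}$ coming from $X\asymp DZ$, which is then swallowed by $N^{\eps}$. Your absorption route is salvageable for the same reason as above ($Z\ge Z_0(D)$ is permitted), but the lifting is cleaner and is the intended role of the auxiliary-polynomial construction here.
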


This is more subtle than Lemma \ref{lem6.1}, as it requires us to extract savings depending on the size of the coefficients of $P_D$. Indeed, it would be false if $\gcd(P_D -P_D(0))$ were large. However, we know from \cite[Lemma 28]{Luc2006} that 
\begin{equation} \label{eqn6.2}
\gcd(P_D - P_D(0)) \ll_P 1.
\end{equation}

Our proof of Lemma \ref{lem6.3} proceeds along similar lines to that of Lemma \ref{lem6.1}. Orthogonality yields
\begin{align*}
\int_\bT |\hat \phi(\alp)|^{2T} \d \alp 
&= (N/Z)^{2T} \sum_{\substack{z_1,\ldots,z_{2T} \le Z \\
P_D(z_1) + \cdots + P_D(z_T) = P_D(z_{T+1}) + \cdots + P_D(z_{2T})}} 1 \\ &\le
(N/Z)^{2T} \sum_{\substack{x_1,\ldots,x_{2T} \le X \\
P(x_1) + \cdots + P(x_T) = P(x_{T+1}) + \cdots + P(x_{2T})}} 1
\ll (N/Z)^{2T} 
X^{2T - d + \eps} \\
&\ll (N/Z)^{2T} (DZ)^{2T}
(DZ)^{\eps - d} \ll 
N^{2T} D^{2T} N^{(\eps-d)/d}
= D^{2T} N^{2T - 1 + \eps/d}.
\end{align*}
As $N$ is arbitrarily large compared to $D$, we thus have
\[
\int_\bT |\hat \phi(\alp)|^{2T} \d \alp \ll N^{2T - 1 + \eps},
\]
and the implied constant does not depend on $D$.
Let $u = (2T + E)/2$, in order to be sure that $u > 2d$. Since
\[
\| \hat \phi \|_\infty
\le \| \mu_D \|_1 = N,
\]
we have
\begin{equation} \label{eqn6.3}
\int_\bT |\hat \phi(\alp)|^u \d \alp \ll N^{u-1+\eps}.
\end{equation}

\bigskip

Suppose $\alp \in \fm$. By Dirichlet's approximation theorem, there exist coprime $v \in \bN$ and $b \in \bZ$ such that $v \le N/Q$ and $|\alp - b/v| \le Q/(vN)$. As $\alp \in \fm$, we must also have $v > Q$. The leading coefficient of $P_D$ is 
\[
\frac{\ell_P D^d}{\lam(D)} \ll_D 1, 
\]
where $\ell_P$ is the leading coefficient of $P$. Hence, Weyl's inequality in the form \cite[Proposition 4.14]{Ove2014} gives
\[
\hat \mu_D(\alp) \ll_D N^{1+\eps}(v^{-1} + Z^{-1} + vZ^{-d})^{2^{1-d}} \ll N^{1+\eps} (D^d/Q)^{2^{1-d}}.
\]
Since $N$ is arbitrarily large compared to $D$, we have
\begin{equation} \label{eqn6.4}
\hat \mu_D(\alp) \ll N^{1+\eps} Q^{-2^{1-d}},
\end{equation}
and the implied constant does not depend on $D$.

\bigskip

We come to the major arcs. For $q \in \bN$, $a \in \bZ$ and $\bet \in \bR$, define
\[
S_D(q,a) = \sum_{x \le q} e_q(a P_D(x)),
\qquad
I_D(\bet) = \frac{N}{Z} \int_0^Z e(\bet P_D(z)) \d z.
\]

\begin{lemma}\label{lem6.4}
Let $q \in \bN$, $a \in \bZ$, and suppose $\|q \alp\| = |q \alp - a|$. Let $\bet = \alp - \frac{a}{q}$. Then
\[
\hat \mu_D(\alp) = q^{-1} S_D(q,a) I_D(\bet) + O((q + N \| q \alp \|) N/Z).
\]
\end{lemma}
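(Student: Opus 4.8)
The plan is to mimic the proof of Lemma \ref{lem5.2} (the major arc asymptotic for $\hat\nu$) almost verbatim, since $\hat\mu_D$ has the simpler structure $\hat\mu_D(\alp) = (N/Z)\sum_{z\le Z}e(\alp P_D(z))$, without the polynomial weight $P'(x)$ that complicated the earlier argument. First I would write $\beta = \alp - a/q$ and split the sum over $z \le Z$ into residue classes modulo $q$: for each $x \in \{1,\dots,q\}$, sum over those $z \equiv x \mmod q$, i.e.\ $z = qz' + x$ with $z'$ ranging over an interval $(X_0, Y_0]$ of length $\asymp Z/q$. The key arithmetic input is that, by Taylor's theorem (or simply because $P_D \in \bZ[x]$ and reducing the argument mod $q$ only changes the value by a multiple of $q$ in each monomial after expansion), $P_D(qz' + x) \equiv P_D(x) \mmod q$, so $e_q(aP_D(qz'+x)) = e_q(aP_D(x))$ is constant in $z'$. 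This pulls the exponential sum over $x$ out as the factor $S_D(q,a)$.

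Next I would handle the inner sum over $z'$, which is $\sum_{X_0 < z' \le Y_0} e(\beta P_D(qz'+x))$. Apply Euler--Maclaurin summation (as in the proof of Lemma \ref{lem5.2}, via \cite[Equation (4.8)]{Vau1997}) to replace this sum by the corresponding integral $\int_{X_0}^{Y_0} e(\beta P_D(qz'+x))\,\d z'$, with an error of size $O((1 + \text{length}\cdot|\beta|\cdot\|P_D'\|_\infty))$; since $\beta = \alp - a/q$ satisfies $|\beta| = \|q\alp\|/q$ and on the relevant range $P_D$ and its derivative are $O_D(N)$ and $O_D(N/Z)$ respectively, but these $D$-dependent constants are absorbed (the statement allows the implied constant to depend on $D$ implicitly, or one tracks them as in \S\ref{sec6}), the error per residue class is $O(1 + (N/Z)\|q\alp\|)$ and summing over the $q$ residue classes gives $O(q + N\|q\alp\|)$, which after the factor $N/Z$ is the claimed error term $O((q + N\|q\alp\|)N/Z)$. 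Then the change of variables $z' \mapsto \gam = P_D(qz'+x)$ (legitimate because $P_D$ is monotone increasing on the relevant range by (\ref{eqn3.5}) applied to $P$, so $P_D$ inherits monotonicity) turns the integral into $\tfrac1q \int_0^{...} e(\beta\gam)\,\d\gam$; assembling over $x \le q$ and absorbing the endpoint discrepancies into the error yields $q^{-1}\int_0^Z e(\beta P_D(z))\,\d z$ before multiplication by $N/Z$, i.e.\ precisely $q^{-1}I_D(\beta)$.

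Finally I would combine: $\hat\mu_D(\alp) = q^{-1}S_D(q,a)I_D(\beta) + O((q+N\|q\alp\|)N/Z)$, noting that the initial $O((N/Z)(W\kap)^{d-1})$-type boundary term present in Lemma \ref{lem5.2} does not even arise here since there is no $b$-shift, so the statement follows cleanly. The main (minor) obstacle is bookkeeping the $D$-dependence of the constants arising from bounds on $P_D$ and $P_D'$ on the interval $[0,Z]$, and confirming that the change-of-variables and Euler--Maclaurin error terms really collapse to the stated shape $O((q+N\|q\alp\|)N/Z)$ rather than something with extra factors of $N/Z$ or $Z/q$; this is handled exactly as in the proof of Lemma \ref{lem5.2}, using that $P_D$ maps $[0,Z]$ monotonically onto $[P_D(0), N]$ with $P_D(0) = O_D(1)$ negligible against $N$. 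No deep new idea is needed beyond what is already in \S\ref{sec5}.
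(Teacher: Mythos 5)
Your plan is correct and follows essentially the same route as the paper's proof: split into residue classes modulo $q$, use periodicity of $e_q(aP_D(\cdot))$ to extract $S_D(q,a)$, apply Euler--Maclaurin to the inner sum with error $O(1+N|\bet|)$ per class, and identify the remaining integral with $q^{-1}(Z/N)I_D(\bet)$. One small correction: since $I_D(\bet)$ is defined as $\frac{N}{Z}\int_0^Z e(\bet P_D(z))\,\d z$ (not as $\int_0^N e(\bet\gam)\,\d\gam$), the substitution you need is just the linear one $z = qz'+x$, which sends $(X_0,Y_0]$ onto $(0,Z]$; the nonlinear change of variables $\gam = P_D(\cdot)$ that you borrow from Lemma \ref{lem5.2} is only appropriate there because the weight $P'$ supplies the Jacobian, and here it would introduce an unwanted factor of $1/P_D'$.
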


\begin{proof} Breaking the sum into residue classes modulo $q$ yields
\[
\hat \mu_D(\alp) = \frac{N}{Z} \sum_{y \le q} \sum_{X_0 < x \le Y_0} e(\alp P_D(qx + y)),
\]
where
\[
X_0 = -y/q, \qquad Y_0 = (Z-y)/q.
\]
By periodicity, we have $e_q(aP_{D}(qx+y))=e_q(aP_{D}(y))$, whence
\[
\hat \mu_D (\alp) = \frac{N}{Z} \sum_{y \le q} e_q(a P_D(y)) \sum_{X_0 < x \le Y_0} e(\bet P_D(q x + y)).
\]
Using Euler--Maclaurin summation, we find that
\begin{align*}
&\sum_{X_0 < x \le Y_0} 
e(\bet P_D(q x + y))
- \int_{X_0}^{Y_0} 
e(\bet P_D(q x + y)) \d x 
\\ &\ll 1 + \frac{ Y_0 q |\bet| 
Z^{d-1} D^d} {\lam(D)}
\ll 1 + \frac{Z|\bet| 
X^{d-1}D} {\lam(D)}
\ll 1 + N |\bet|.
\end{align*}
A change of variables gives
\[
\int_{X_0}^{Y_0} e(\bet P_D(q x + y)) \d x
= \frac{Z}{Nq} I_D(\bet),
\]
completing the proof.
\end{proof}

\begin{lemma}\label{lem6.5}
Suppose $(q,a) = 1$. Then
\[
S_D(q,a) \ll q^{1+\eps-1/d}.
\]
\end{lemma}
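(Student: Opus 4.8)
The plan is to reduce the bound for $S_D(q,a)$ to a standard complete exponential sum estimate by exploiting the crucial gcd bound \eqref{eqn6.2}, exactly as in the treatment of $S(q,a)$ in Lemma \ref{lem5.3} but now without the smooth/coprime splitting. First I would write $P_D(x) = \sum_{j \le d} w_j x^j$, so that $w_d = \ell_P D^d / \lam(D)$; since $D \mid \lam(D) \mid D^d$ by \eqref{eqn3.8}, the leading coefficient $w_d$ divides $\ell_P D^{d-1}$, but more to the point I want to control $\gcd(w_1,\ldots,w_d)$, which is precisely $\gcd(P_D - P_D(0))$ and hence $\ll_P 1$ by \eqref{eqn6.2}.

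The main step is then multiplicativity and a gcd reduction. By the Chinese remainder theorem, $S_D(q,a)$ factors as a product of $S_D(p^k, a_p)$ over prime powers $p^k \| q$, so it suffices to bound each local factor. For a prime power $q = p^k$, write $g = \gcd(p^k, \gcd(P_D - P_D(0)))$; since $\gcd(P_D - P_D(0)) \ll_P 1$, we have $g \ll_P 1$. Factoring $g$ out of the polynomial $P_D(x) - P_D(0)$ reduces the phase to $(a/g)(Q(x))$ where $Q = (P_D - P_D(0))/g$ has content coprime to $p$ (at least once $p$ exceeds the bounded quantity $\gcd(P_D-P_D(0))$); for the finitely many small primes dividing the content one absorbs everything into the implied constant. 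Thus, up to a bounded factor, $S_D(p^k,a)$ is a complete exponential sum $\sum_{x \le p^k} e_{p^k}(a' Q(x))$ with $(a', p) = 1$ and $Q$ of degree $d$ whose coefficients are not all divisible by $p$. The standard estimate — e.g. \cite[Theorem 7.1]{Vau1997} together with its extension to prime-power moduli, or directly \cite[Lemmas 8.1--8.3]{Vau1997} — gives $\sum_{x \le p^k} e_{p^k}(a'Q(x)) \ll p^{k(1-1/d)}$, and multiplying the local bounds over all $p^k \| q$ yields $S_D(q,a) \ll q^{1+\eps-1/d}$, where the $q^\eps$ absorbs the bounded per-prime constants over the $\ll_\eps q^\eps$ prime divisors of $q$.

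The one subtlety to watch is that the constant $\ell_P D^d/\lam(D)$ and more generally all the coefficients $w_j$ grow with $D$, so a naive application of Weyl-type bounds would produce a $D$-dependent constant; the whole point of invoking \eqref{eqn6.2} is that the \emph{content} of $P_D - P_D(0)$ stays bounded in terms of $P$ alone, which is what makes the gcd reduction above uniform in $D$. The leading coefficient itself can be highly divisible by small primes, but that does not matter: what the complete-sum estimate needs is merely that $Q \bmod p$ is a nonzero polynomial of degree at most $d$ (degree exactly $\deg Q \ge 1$), and in fact the estimate $\ll p^{k(1 - 1/\deg Q)} \le p^{k(1-1/d)}$ holds as soon as $p \nmid \gcd(Q)$, which holds for all but boundedly many $p$. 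I do not expect any genuine obstacle here; the proof is a routine adaptation of Lemma \ref{lem5.3}, with \eqref{eqn6.2} playing the role that the relation $(P'(b),W) \mid \sqrt W$ played there, and the only care needed is bookkeeping of the $D$-independence of all implied constants.
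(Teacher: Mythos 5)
Your proposal is correct and is essentially the paper's argument: the proof there is a one-liner invoking periodicity (which encodes exactly your gcd reduction, since $\gcd(P_D-P_D(0))\ll_P 1$ by \eqref{eqn6.2} bounds the factor that can be pulled out of the modulus) together with \cite[Theorem 7.1]{Vau1997}, which already handles general $q$ so your explicit Chinese-remainder decomposition into prime powers is an unpacking rather than a different route. The only nit is the stray reference to ``Lemmas 8.1--8.3'' of Vaughan; Theorem 7.1 is the right citation.
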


\begin{proof} In view of \eqref{eqn6.2}, this follows from periodicity and \cite[Theorem 7.1]{Vau1997}.
\end{proof}

\begin{lemma}\label{lem6.6}
We have
\[
I_D(\bet) \ll N(1 + N \| \bet \|)^{-1/d}.
\]
\end{lemma}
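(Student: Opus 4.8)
The plan is to estimate the oscillatory integral $\int_0^Z e(\bet P_D(z))\d z$ by combining the trivial bound with van der Corput's lemma of order $d$, and then to recast the outcome in the stated form using the size of $N$ relative to $Z$ and $D$. Recall that $I_D(\bet)=\tfrac{N}{Z}\int_0^Z e(\bet P_D(z))\d z$, so $|I_D(\bet)|\le N$ trivially. Since $\|\bet\|\le|\bet|$ for every real $\bet$, the bound $N(1+N\|\bet\|)^{-1/d}$ is weakened by replacing $\|\bet\|$ with $|\bet|$, so it suffices to prove the a priori stronger estimate $|I_D(\bet)|\ll N(1+N|\bet|)^{-1/d}$; the trivial bound handles the range $N|\bet|\ll 1$, so we may assume $\bet\ne 0$.

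Write $L=D^d/\lam(D)$, so that $1\le L\le D^{d-1}$ by (3.8). The leading coefficient of the degree-$d$ polynomial $P_D$ is $\ell_P L$, where $\ell_P>0$ is the leading coefficient of $P$; hence the $d$-th derivative of $z\mapsto\bet P_D(z)$ is the non-zero constant $d!\,\ell_P L\bet$. Van der Corput's lemma of order $d$ — which carries no monotonicity hypothesis once $d\ge 2$, the implied constant depending only on $d$ and $\ell_P$ — then yields
\[
\Big|\int_0^Z e(\bet P_D(z))\d z\Big|\ll_P (L|\bet|)^{-1/d},
\]
and therefore $|I_D(\bet)|\ll_P\tfrac{N}{Z}(L|\bet|)^{-1/d}$.

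It remains to simplify using $N\asymp_P LZ^d$. This holds because $P$ obeys (3.5) and $Z$ is large relative to $P$ and $D$, so the leading term of $P_D$ dominates: expanding $P(r_D+DZ)$ and using $-D<r_D\le 0$ gives $N=P_D(Z)=\ell_P L Z^d\bigl(1+O_P(Z^{-1})\bigr)$, with the implied constant depending only on $P$ (and not on $D$). Consequently $\tfrac{N}{Z}L^{-1/d}\asymp_P L^{1-1/d}Z^{d-1}\asymp_P N^{1-1/d}$, and feeding this into the previous display gives $|I_D(\bet)|\ll_P N^{1-1/d}|\bet|^{-1/d}=N(N|\bet|)^{-1/d}$. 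Since this covers the range $N|\bet|\gg 1$, combining it with the trivial bound $|I_D(\bet)|\le N$ yields $|I_D(\bet)|\ll_P N\min\{1,(N|\bet|)^{-1/d}\}\ll N(1+N|\bet|)^{-1/d}$, and the lemma follows.

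The only delicate point is uniformity in $D$: both the van der Corput estimate and the relation $N\asymp_P LZ^d$ must hold with implied constants that do not depend on $D$. This is secured by keeping the leading coefficient $\ell_P L$ of $P_D$ explicit and carrying the factor $L=D^d/\lam(D)$ throughout, rather than absorbing it into the $\ll$-notation — in the same spirit as the control of $\gcd(P_D-P_D(0))$ exploited in the proof of Lemma~\ref{lem6.3}.
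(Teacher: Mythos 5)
Your proof is correct and follows essentially the same route as the paper: the paper invokes \cite[Theorem 7.3]{Vau1997} (the standard oscillatory-integral bound in terms of the leading coefficient $\ell_P D^d/\lam(D)$ of $P_D$), which is interchangeable with your application of van der Corput's lemma of order $d$, and both arguments conclude with the same observation $(ZD)^d/\lam(D) \asymp P_D(Z) = N$. The uniformity in $D$ is handled identically, by keeping the factor $D^d/\lam(D)$ explicit until it is absorbed into $N$.
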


\begin{proof} 
The leading coefficient of $P_D$ is $\ell_P D^d/\lam(D)$, so by \cite[Theorem 7.3]{Vau1997} we have
\[
I_D(\bet) \ll N(1 + Z^d D^d \| \bet \| / \lam(D))^{-1/d}.
\]
The claimed bound follows upon noting that
\[
\frac{(ZD)^d}{\lam(D)} \asymp \frac{P(X)}{\lam(D)} = P_D(Z) = N.
\]
\end{proof}

\begin{proof}[Proof of Lemma \ref{lem6.3}]
Combining the results of Lemmas \ref{lem6.4}, \ref{lem6.5} and \ref{lem6.6} furnishes
\begin{equation} \label{eqn6.5}
\hat \mu_D (\alp) \ll \frac{q^{\eps-1/d} N}{(1 + N |\alp - a/q|)^{1/d}} + DNX^{2\tau-1}
\qquad (\alp \in \fM(q,a) \subset \fM).
\end{equation}
Finally, observe from its proof that \cite[Lemma 25]{Sal2020} holds with $\| \alp - a/q \|^\kap$ in place of $\| \alp - a/q \|$ in its third assumption.
Inserting \eqref{eqn6.3}, \eqref{eqn6.4} and \eqref{eqn6.5} into this, applied with $\kap = d^{-1} - \eps$, completes the proof.
\end{proof}

\section{The transference principle}\label{sec7}

Let $s \ge 1$ and $t \ge 0$ be integers such that $s + t \ge s_0(d)$, where $s_0(d)$ is as defined in \eqref{eqn3.3}. For finitely supported $f_1 ,\ldots, f_s: \Z \to \R$ and $h_1 ,\ldots, h_t: \Z \to \R$, define
\begin{equation}\label{eqn7.1}
 \Phi(f_1 ,\ldots, f_s;h_1,\ldots,h_t) = \sum_{L_1(\bn) = L_2(\mathbf{m})} f_1(n_1)\cdots f_s(n_s)h_1(m_1)\cdots h_t(m_t).   
\end{equation}
For finitely supported $f,h: \bZ \to \bR$, we abbreviate 
\[
\Phi(f_1,\ldots, f_s;h) := \Phi(f_1,\ldots,f_s;h,\ldots,h),
\qquad
\Phi(f;h) := \Phi(f,\ldots,f;h,\ldots,h).
\]
Given a finite set of integers $A$, we also write $\Phi(A;h):=\Phi(1_A;h)$.

We begin by showing that the size of counting operator $\Phi(f_1,\ldots,f_s;h)$ is controlled by the size of the Fourier coefficients of each of the $f_j$.
Write
\[
L_1(\bx) = a_1 x_1 + \cdots + a_s x_s, \qquad
L_2(\bx) = c_1 x_1 + \cdots + c_t x_t.
\]

\begin{lemma} [Fourier control] \label{lem7.1} Let $f_{1},\ldots,f_s :\Z\to\R$ and $h:\Z\to\R$. If $|h| \leqslant \mu_D$ and $|f_j| \leqslant \nu + 1_{[N]}$ for all $j\in[s]$, then
\[
\Phi(f_1,\ldots,f_s;h) \ll N^{s+t-1}\prod_{j\leqslant s}(\| \hat f_j \|_\infty/N)^{1/(2s+2t)} .
\]
\end{lemma}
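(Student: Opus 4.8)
The estimate to prove is a standard "generalised von Neumann" inequality for the counting operator $\Phi$, and the plan is to exploit orthogonality to express $\Phi$ as an integral over $\bT$ of a product of Fourier transforms, and then to peel off the $\hat f_j$ factors one at a time using Hölder's inequality, controlling each remaining factor by the restriction estimates of \S\ref{sec6}. First I would write, by orthogonality,
\[
\Phi(f_1,\ldots,f_s;h) = \int_\bT \left( \prod_{j\le s} \hat f_j(a_j\alp) \right) \left( \prod_{i\le t} \overline{\hat h(c_i\alp)} \right) \d\alp,
\]
using $L_1(\bn) = L_2(\mathbf m)$; here one must be slightly careful with signs and complex conjugates, but since everything is real-valued the bookkeeping is harmless. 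The goal is then to bound this integral.

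The main step is the Hölder/restriction argument. Crudely bound $|\hat f_j(a_j\alp)| \le \|\hat f_j\|_\infty$ for all but one index, and keep one factor, say $\hat f_1(a_1\alp)$, together with all the $t$ factors $\hat h(c_i\alp)$, inside the integral. That gives
\[
|\Phi| \le \Big( \prod_{j=2}^s \|\hat f_j\|_\infty \Big) \int_\bT |\hat f_1(a_1\alp)| \prod_{i\le t} |\hat h(c_i\alp)| \, \d\alp.
\]
Now apply Hölder with exponents $E = 2s+2t$ on each of the $1 + t$ factors (padding with the trivial exponent if $t=0$, in which case the integral is just $\int |\hat f_1(a_1\alp)|\,\d\alp \le \int(|\hat f_1|^E)^{1/E}\cdot 1$ after Hölder, or more cleanly one inserts extra trivial factors). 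Each resulting integral $\int_\bT |\hat f_1(a_1\alp)|^E\,\d\alp$ equals $\int_\bT |\hat f_1(\alp)|^E\,\d\alp$ by periodicity (the dilation by the fixed nonzero integer $a_1$ permutes $\bT$ measure-preservingly when unwound over residues, or simply: $\int_\bT|\hat g(k\alp)|^E\d\alp = \int_\bT|\hat g(\alp)|^E\d\alp$ for $k\in\bZ\setminus\{0\}$), and similarly $\int_\bT|\hat h(c_i\alp)|^E\,\d\alp = \int_\bT|\hat h(\alp)|^E\,\d\alp$. Since $|f_1|\le \nu + 1_{[N]}$, Lemma~\ref{lem6.2} gives $\int_\bT|\hat f_1(\alp)|^E\,\d\alp \ll N^{E-1}$, and since $|h|\le \mu_D$, Lemma~\ref{lem6.3} gives $\int_\bT|\hat h(\alp)|^E\,\d\alp \ll N^{E-1}$. (One must check $E = 2s+2t > 2T$: indeed $s+t \ge s_0(d) = 2T+1$, so $2s+2t \ge 2(2T+1) > 2T$.) Combining, the kept integral is $\ll (N^{E-1})^{(1+t)/E} \le N^{1+t}$ up to the exponent arithmetic, and since $\|\hat f_j\|_\infty \le \|f_j\|_1 \ll N$ for the discarded factors, we are on track for the bound $N^{s+t-1}\prod_{j\le s}(\|\hat f_j\|_\infty/N)^{1/(2s+2t)}$ — but this only produced one factor of $\|\hat f_1\|_\infty/N$ worth of saving, not a symmetric product.

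To recover the symmetric product over all $j\in[s]$, I would instead average: run the above argument $s$ times, keeping $\hat f_j$ for $j = 1, 2, \ldots, s$ in turn, to get $s$ bounds of the form $|\Phi| \ll N^{s+t-1}(\|\hat f_j\|_\infty/N)^{\theta}$ for a fixed exponent $\theta$, and take the geometric mean over $j$; this yields $|\Phi| \ll N^{s+t-1}\prod_{j\le s}(\|\hat f_j\|_\infty/N)^{\theta/s}$. Chasing the exponents through Hölder with $1+t$ factors at exponent $E=2s+2t$ shows $\theta = 1 - (1+t)/E$ after accounting for how the $\|\hat f_j\|_\infty/N$ factor enters, and one checks $\theta/s \ge 1/(2s+2t)$, which suffices since decreasing the exponent on a quantity that is $\le 1$ (note $\|\hat f_j\|_\infty \le \|f_j\|_1 \ll N$, so $\|\hat f_j\|_\infty/N \ll 1$) only weakens the bound — modulo absorbing the implied constant, which is legitimate as all implied constants may depend on $L_1, L_2, P, r, \delta$ and on $s, t$.

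\emph{Expected main obstacle.} The routine-but-fiddly part is the exponent arithmetic: making sure the Hölder exponents are chosen so that (i) every moment integral that appears has exponent exceeding $2T$ so that Lemmas~\ref{lem6.2} and~\ref{lem6.3} apply, (ii) the powers of $N$ combine to exactly $N^{s+t-1}$, and (iii) the surviving power of $\|\hat f_j\|_\infty/N$ is at least $1/(2s+2t)$ in each variable. The cleanest route is probably to treat the $s$ factors $\hat f_j$ and the single function $h$ appearing $t$ times as $s+t$ "slots", apply Hölder with the single exponent $2(s+t)$ uniformly across a symmetrised version of the integral (bounding $\prod_{i\le t}|\hat h(c_i\alp)|$ and all but one $\hat f_j$), and then symmetrise in $j$ at the end; the factor-of-$2$ slack in taking $E = 2(s+t)$ rather than $s+t$ is exactly what guarantees $E > 2T$ and leaves room for the $\|\hat f_j\|_\infty/N$ savings. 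The genuinely substantive inputs — the restriction estimates — are already in hand from \S\ref{sec6}, so no real analytic difficulty remains; this lemma is bookkeeping on top of those.
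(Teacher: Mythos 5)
There is a genuine gap in your H\"older step, and it is not just bookkeeping. When you discard $s-1$ of the factors $\hat f_{j'}$ by their sup norms and apply H\"older with exponent $E=2s+2t$ to the remaining $1+t$ factors, each $L^E$ norm contributes $(N^{E-1})^{1/E}=N^{1-1/E}$, so the kept integral is $\ll N^{(1+t)(1-1/E)}$, while the discarded factors contribute $\prod_{j'\neq j}\|\hat f_{j'}\|_\infty\ll N^{s-1}\prod_{j'\neq j}(\|\hat f_{j'}\|_\infty/N)$. The total is
\[
N^{\,s+t-(1+t)/(2s+2t)}\prod_{j'\neq j}\bigl(\|\hat f_{j'}\|_\infty/N\bigr),
\]
and the power of $N$ here exceeds the target $N^{s+t-1}$ by $N^{1-(1+t)/(2s+2t)}\geqslant N^{1/2}$. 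Your claimed single-index bound $N^{s+t-1}(\|\hat f_j\|_\infty/N)^{\theta}$ with $\theta=1-(1+t)/E$ is therefore not what the computation yields: the kept factor $\hat f_j$ contributes no sup-norm saving at all (it is absorbed into the $L^E$ moment), and the deficit sits in the power of $N$, which taking a geometric mean over $j$ cannot repair (test it on $f_j=1_{[N]}$, $h=\mu_D$, where the target is $N^{s+t-1}$ but your bound is $N^{s+t-(1+t)/(2s+2t)}$). The variant in your closing remark, H\"older at exponent $2(s+t)$ across all $s+t$ factors, still wastes half the measure of $\bT$ and only yields $N^{s+t-1/2}$. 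The structural point is that to collect the full $N^{-1}$ saving the conjugate exponents must sum to exactly $1$ over all $s+t$ factors, i.e.\ one must use exponent $s+t$ on each; the hypothesis $s+t\geqslant s_0(d)=2T+1$ is precisely what makes each such moment admissible for Lemmas \ref{lem6.2} and \ref{lem6.3}.

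This is what the paper does: after orthogonality and the periodicity substitution (which you handle correctly), it bounds $|\Phi|$ by $\prod_{j\le s}\bigl(\int_\bT|\hat f_j|^{s+t}\bigr)^{1/(s+t)}\cdot\bigl(\int_\bT|\hat h|^{s+t}\bigr)^{t/(s+t)}$, which gives $N^{s+t-1}$ on the nose. The factor $(\|\hat f_j\|_\infty/N)^{1/(2s+2t)}$ then comes not from a H\"older exponent $2s+2t$ but from the interpolation $\int_\bT|\hat f_j|^{s+t}\le\|\hat f_j\|_\infty^{1/2}\int_\bT|\hat f_j|^{s+t-1/2}$, where $s+t-1/2>2T$ still permits the restriction estimate; raising to the power $1/(s+t)$ produces the exponent $1/(2s+2t)$. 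Your overall architecture (orthogonality, periodicity, the restriction estimates of \S\ref{sec6}) is right, but the H\"older/interpolation scheme must be replaced as above.
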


\begin{proof} By orthogonality,  H\"older's inequality, and periodicity, we have
\begin{align*}
|\Phi(f_1,\ldots,f_s;h)| &= 
\left|
\int_\bT \prod_{j \le s} \hat f_j(a_j \alp) \cdot \prod_{\ell \le t} \hat h(-c_\ell \alp) \d \alp \right| 
\\ &\le \prod_{j \le s} \left( \int_\bT |\hat f_j (a_j \alp)|^{s+t} \d \alp\right)^{1/(s+t)} \cdot
\prod_{\ell \le t} 
\left(
\int_\bT |\hat h (-c_\ell \alp)|^{s+t} \d \alp\right)^{1/(s+t)}
\\ &= 
\prod_{j \le s}
\left( 
\int_\bT |\hat f_j(\alp)|^{s+t} \d \alp\right)^{1/(s+t)}
\cdot
\left( \int_\bT |\hat h (\alp)|^{s+t} \d \alp\right)^{t/(s+t)}
\\ &\le 
\left( \int_\bT |\hat h (\alp)|^{s+t} \d \alp\right)^{t/(s+t)}
\prod_{j \leqslant s}
\left( \|\hat f_j\|_\infty^{1/2} 
\int_\bT |\hat f_j (\alp)|^{s+t-1/2} \d \alp\right)^{1/(s+t)}.
\end{align*}
Lemmas \ref{lem6.2} and \ref{lem6.3} now give
\begin{align*}
\Phi(f_1,\ldots,f_s;h) &\ll (N^{s+t-1})^{t/(s+t)}\prod_{j\leqslant s}\left(\|\hat f_j\|_\infty^{1/(2s+2t)}N^{(s+t-3/2)/(s+t)} \right)
  \\ &= N^{s+t-1}\prod_{j\leqslant s}(\| \hat f_j \|_\infty/N)^{1/(2s+2t)}.
\end{align*}
\end{proof}

\begin{proof}[Proof of Theorem \ref{thm3.4} given Theorem \ref{thm3.8}]

We fix the parameters $\delta,r,L_1,L_2,P$, as we did at the start of \S\ref{sec4}, and allow all forthcoming implicit constants to depend on these parameters. Let $\tilde{\delta}\in(0,1)$ be sufficiently small in terms of these parameters. We also choose $w\in\N$ to be sufficiently large in terms of the fixed parameters, and define $W$ and $D=W^2$ as in \S\ref{sec4}. Let $Z$ and $N$ be defined by (\ref{eqn4.2}).

We begin by addressing the assumption that the quantity $Z$ defined in (\ref{eqn4.2}) is a positive integer, which is equivalent to requiring $D$ to divide $X-r_D$. If this is not the case, then we replace $X$ with $X'=X-m$, where $m\in[D]$ is chosen such that $D$ divides $X'-r_D$. Provided that $X$ is sufficiently large relative to $D$ and $\delta$, we have $(X/2)<X'\leqslant X$, and every $A\subseteq[X]$ with $|A|\geqslant\delta X$ satisfies $|A\cap [X']|\geqslant |A|-D\geqslant (\delta/2)X'$. Hence, by replacing $(X,\delta)$ with $(X',\delta/2)$, we may henceforth assume that $Z\in\N$.

\bigskip

Let $[X]=\cC_1\cup\cdots\cup\cC_r$ and set
\begin{equation*}
    \tilde \cC_i := \{ z \in [Z]: r_D + D z \in C_i \} \quad (1\leqslant i\leqslant r).
\end{equation*}
Let $k \in [r]$ be the index provided by applying Theorem \ref{thm3.8} with respect to the colouring $[Z]=\tilde{\cC}_1 \cup\cdots\cup\tilde{\cC}_r$ and with $\tilde{\delta}$ in place of $\delta$. Our goal is to show that this $k\in[r]$ satisfies the conclusion of Theorem \ref{thm3.4}. In view of the remarks following the statement of Theorem \ref{thm3.8}, and since $D$ is ultimately bounded above in terms of the fixed parameters only, we note that
\begin{equation*}
    |\cC_k|\geqslant |\tilde{\cC}_k| \gg Z \gg X.
\end{equation*}

Let $A\subseteq[X]$ satisfy $|A|\geqslant \delta X$, and define $\kappa,b,\cA$ and $\nu=\nu_b$ as in \S\ref{sec4}. In particular, recall that $b\in[W\kappa]$ is chosen to ensure that Lemma \ref{lem4.1} holds. Let
\[
f = \nu 1_{\cA}, \qquad h_i(n) = \frac{N}{Z}\sum_{\substack{z \in \tilde{\cC}_i \\ P_D(z) = n}} 1 \quad (1\leqslant i\leqslant r).
\]
In light of \eqref{eqn3.5}, the function $h_i$ is supported on $[N]$.
Recalling the Fourier decay estimate (\ref{eqn5.1}), the dense model lemma \cite[Theorem 5.1]{Pre2017} provides a function $g$ such that 
\[
0 \le g \le 1_{[N]},
\quad
\| \hat f - \hat g \|_\infty \ll (\log w)^{-3/2} N.
\]
For $\ell \in [s]$, write
$\bu^{(\ell)} = (u^{(\ell)}_1,\ldots,u^{(\ell)}_s)$,
where
\[
u^{(\ell)}_j = 
\begin{cases}
g, &\text{if } j < \ell \\
f - g, &\text{if } j = \ell \\
f, &\text{if } j > \ell.
\end{cases}
\]
By the telescoping identity and Lemma \ref{lem7.1}, we now have
\begin{align*}
\Phi(f;h_i) - \Phi(g;h_i) 
&= 
\sum_{\ell \le s}
\Phi(\bu^{(\ell)}; h_i) \ll (\log w)^{-3/(4s+4t)} N^{s+t-1} \qquad (1 \le i \le r).
\end{align*}
Recall from Lemma \ref{lem4.1} that
\[
\sum_{n \in \bZ} f(n) \gg N.
\]
As $\hat f(0) - \hat g(0) \ll (\log w)^{-3/2} N$, for $w$ sufficiently large, it follows that
\[
\sum_{n \in \bZ} g(n) \gg N.
\]

Let $c$ be a small, positive constant, which depends only on the fixed parameters, and set
\[
\tilde \cA = \{ n \in \bZ: g(n) \ge c \}.
\]
By the popularity principle (see \cite[Exercise 1.1.4]{TV2006}), we have $|\tilde \cA| \gg N$. In particular, provided $\tilde{\delta}$ is sufficiently small, we can ensure that $|\tilde\cA| \geqslant \tilde{\delta}N$. Thus, Theorem \ref{thm3.8} informs us that
\[
\Phi(\tilde \cA;h_k) \gg N^{s+t-1}.
\]
We therefore have $\Phi(g;h_k) \gg N^{s+t-1}$, whence $\Phi(f;h_k) \gg N^{s+t-1}$, and finally
\begin{align*}
|\{ (\bx,\by) \in A^s \times\cC_k^t: L_1(P(\bx)) = L_2(P(\by)) \}| &
\geqslant \lVert f\rVert_\infty ^{-s}\lVert h_k\rVert_\infty ^{-t}\Phi(f; h_k) \\
&\gg_w (X^{1-d})^s (Z/N)^t N^{s+t-1} \\
&\gg_w X^{s+t-d}.
\end{align*}
Since $w= O_{\delta,r,L_1,L_2,P}(1)$, the proof is complete.
\end{proof}

\section{Arithmetic regularity}\label{sec8}

In this section, we prove Theorem \ref{thm3.8} using the arithmetic regularity lemma. This lemma, originally due to Green \cite{Gre2005B}, allows one to decompose the indicator function $1_\cA$ of a dense set $\cA\subseteq[N]$ as $1_{\cA}=f_{\str}+f_{\sml}+f_{\unf}$, for some `structured' function $f_{\str}:[N]\to[0,1]$ and some `small' functions $f_{\sml},f_{\unf}:[N]\to[-1,1]$. The upshot is that, after some careful analysis, we can count solutions to $L(\bn)=L_2(P_D(\bz))$ with $\bn\in\cA^s$ by instead counting solutions with the $n_i$ weighted by $f_{\str}$. This new counting problem can be addressed directly by exploiting the `almost-periodicity' of the function $f_{\str}$.

One issue with this approach is that Theorem \ref{thm3.8} requires us to find a colour class $\cC_k$ which delivers the conclusion (\ref{eqn3.10}) for all $\delta$-dense sets $\cA\subseteq[N]$ simultaneously. Unfortunately, the arithmetic regularity lemma is not well-suited to decomposing a potentially unbounded collection of indicator functions $1_{\cA}$ in such a way that we obtain a consistent structure for each of the corresponding functions $f_{\str}$. 
Instead, as in the work of Prendiville \cite[\S3]{Pre2021}, we fix an arbitrary finite collection of dense sets $\cA_1,\ldots,\cA_r\subseteq[N]$, which can then be decomposed simultaneously, and find a colour class $\cC_k$ for which (\ref{eqn3.10}) holds for all $\cA\in\{\cA_1,\ldots,\cA_r\}$. This delivers the following variation of Theorem \ref{thm3.8}.

\begin{thm}\label{thm8.1}
Let $d$ and $r$ be positive integers, and let $0<\delta<1$ be a real number. 
Let $P$ be an intersective integer polynomial of degree $d$ which satisfies (\ref{eqn3.5}).
Let $s \ge 1$ and $t \ge 0$ be integers such that $s + t \ge s_0(d)$. Let
$L_1(\bx) \in \bZ[x_1,\ldots,x_s]$
be a non-degenerate linear form for which $\gcd(L_1) = 1$ and $L_1(1,\ldots,1)=0$, and let $L_2(\by) \in \bZ[y_1,\ldots,y_t]$ be a non-degenerate linear form.
Let $D, Z \in \bN$ satisfy $Z \ge Z_0(D, r, \del, L_1, L_2, P)$, and set $N:=P_D(Z)$.
Let
\[
\cA_i \subseteq [N], \quad
|\cA_i| \ge \del N \qquad (1 \le i \le r).
\]
If $[Z] = \cC_1 \cup \cdots \cup \cC_r$, then there exists $k \in [r]$ such that
\begin{equation}\label{eqn8.1}
\# \{ (\bn,\bz) \in \cA_i^s \times \cC_k^t: L_1(\bn) = L_2(P_D(\bz)) \} \gg N^{s-1} Z^t \qquad (1 \le i \le r).
\end{equation}
The implied constant may depend on $L_1, L_2, P, r,$ and  $\del$, but does not depend on $D$.
\end{thm}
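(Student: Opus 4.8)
The plan is to follow the scheme of \cite[\S3]{Pre2021} and \cite{Cha2022}: apply Green's arithmetic regularity lemma \emph{simultaneously} to $1_{\cA_1},\ldots,1_{\cA_r}$, discard the uniform and small errors, reduce the count to its structured part, and then select the colour class by means of a polynomial Bohr set. By Proposition \ref{prop3.10} we may assume $\gcd(L_1)=1$; non-degeneracy of $L_1$ together with $L_1(1,\ldots,1)=0$ forces $s\ge 2$, and the case $t=0$ is immediate (then $\cC_k$ plays no role, and a translation-invariant non-degenerate form in $s\ge 2$ variables has $\gg_\delta N^{s-1}$ solutions inside any $\delta$-dense subset of $[N]$), so we take $t\ge 1$.

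First I would apply a quantitative Fourier-analytic regularity lemma (the $U^2$ incarnation of \cite{Gre2005B}) to obtain, for a growth function chosen at the end, a modulus $q=O_{\delta,r}(1)$ and frequencies $\alpha_1,\ldots,\alpha_K\in\bT$ with $K=O_{\delta,r}(1)$, irrational to every bounded denominator, together with decompositions $1_{\cA_i}=f_{i,\str}+f_{i,\sml}+f_{i,\unf}$ in which $0\le f_{i,\str}\le 1$ is a Lipschitz function of $(n\bmod q,\alpha_1 n,\ldots,\alpha_K n)$, $\|f_{i,\sml}\|_2$ is small, and $\|f_{i,\unf}\|_{U^2}$ is smaller than any prescribed quantity depending on $q$, $K$ and the remaining data. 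As $L_1(\bn)=m$ is a single non-degenerate equation in $s\ge 2$ variables it is controlled by the $U^2$-norm in the $\bn$ variables, so a generalised von Neumann inequality shows that replacing one copy of $1_{\cA_i}$ in the count by $f_{i,\unf}$ costs only $O(\|f_{i,\unf}\|_{U^2}N^{s-1}Z^t)$, which is negligible. For a copy replaced by $f_{i,\sml}$ I would instead Cauchy--Schwarz in $n_1$: writing $w(n_1)$ for the number of $(\bn',\bz)\in[N]^{s-1}\times[Z]^t$ with $a_2 n_2+\cdots+a_s n_s-L_2(P_D(\bz))=-a_1 n_1$, one has $\|w\|_1\ll N^{s-1}Z^t$ and $\|w\|_\infty\ll N^{s-2}Z^t$, hence $\|w\|_2\ll N^{s-3/2}Z^t$, so the error is $\ll\|f_{i,\sml}\|_2\|w\|_2$, again negligible. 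Thus, up to acceptable errors, the left side of \eqref{eqn8.1} equals the same count with each $1_{\cA_i}$ replaced by $f_{i,\str}$.

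For the main term I would use the equidistribution of $(\bn\bmod q,(\alpha_j n_l)_{j,l})$ along the level set $\{L_1(\bn)=m\}$, which holds provided $m$ lies in the good set
\[
G:=\{\,m\in\bZ:\ q\mid m,\ \|\alpha_j m\|<\rho\ (1\le j\le K),\ |m|\le cN\,\}
\]
for suitably small $\rho=\rho(\delta,r)>0$ and $c=c(L_1)>0$; for such $m$ this gives a count $\gg\delta^s N^{s-1}$, invoking a standard lattice-point estimate $\#\{\bn\in[N]^s:L_1(\bn)=m\}\asymp N^{s-1}$ (here $\gcd(L_1)=1$ and $|m|\le cN$) and $N^{-1}\sum_n f_{i,\str}(n)\ge\delta/2$. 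It therefore suffices to find $k\in[r]$ admitting $\gg Z^t$ tuples $\bz\in\cC_k^t$ with $L_2(P_D(\bz))\in G$. Restricting each $z_j$ to a box $[\eta Z]$, with $\eta=\eta(L_1,L_2)>0$ small, makes $|L_2(P_D(\bz))|\le\|L_2\|_1 P_D(\eta Z)\le cN$ automatic at the cost of a constant factor, and I would set
\[
B:=\{\,z\in[\eta Z]:\ q\mid P_D(z),\ \|\alpha_j P_D(z)\|<\rho/\|L_2\|_1\ (1\le j\le K)\,\}.
\]
Since $P_D$ is intersective (Lemma \ref{lem3.7}), the relation $q\mid P_D(z)$ holds on a non-empty union of residue classes modulo $q$; restricting $z$ to such a class and applying Weyl's inequality to $P_D$ (whose leading coefficient, multiplied by any $\alpha_j$, is irrational to bounded denominator) shows that $\{\alpha_j P_D(z)\}$ equidistributes, whence $|B|\gg Z$ with implied constant depending only on $q,K,\rho,\eta$, hence only on $\delta,r,L_1,L_2,P$, once $Z$ is large in terms of $D$ and the regularity parameters. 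Pigeonholing over $[\eta Z]=\bigcup_i(\cC_i\cap[\eta Z])$ produces $\cC_k$ with $|\cC_k\cap B|\gg Z$, and every $\bz\in(\cC_k\cap B)^t$ satisfies $L_2(P_D(\bz))=\sum_j c_j P_D(z_j)\in G$. Summing $\gg\delta^s N^{s-1}$ over these $\gg Z^t$ tuples yields \eqref{eqn8.1}, with a constant independent of $D$.

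I expect the main obstacle to be this final step: arranging that $B\subseteq[\eta Z]$ has density bounded below by a constant uniform in $D$, and that $L_2\circ P_D$ maps a positive proportion of the box into $G$, so that the exponent $Z^t$ survives undegraded. This requires combining the intersectivity of $P_D$ (to handle $q\mid m$, uniformly in $D$) with Weyl-type equidistribution for the polynomial $P_D$ modulo one (to handle the $\alpha_j$-conditions), taking $Z$ large enough to absorb the $D$-dependence of the Weyl estimates; only then do all the bottleneck constants depend solely on the regularity parameters $q,K,\rho$, hence on $\delta,r,L_1,L_2,P$ alone.
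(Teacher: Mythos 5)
Your overall architecture (simultaneous regularity decomposition, von Neumann to discard $f_{\unf}$ and $f_{\sml}$, a Bohr-set condition on $L_2(P_D(\bz))$, pigeonhole over colours) matches the paper's, and you have correctly located the crux in the final step. However, your resolution of that crux does not work. You propose to show $|B|\gg Z$ by combining intersectivity (for the condition $q\mid P_D(z)$) with Weyl's inequality (for the conditions $\|\alpha_j P_D(z)\|<\rho$), on the grounds that the $\alpha_j$ are ``irrational to every bounded denominator.'' The regularity lemma gives you no useful Diophantine control of this kind relative to $D$: the leading coefficient of $P_D$ is $\ell_P D^d/\lam(D)$, an integer growing with $D$, and an adversarial frequency $\alp_j$ (e.g.\ one for which $\alp_j\ell_P D^d/\lam(D)$ is extremely close to a rational with small denominator) defeats Weyl entirely while still being ``highly irrational'' in the bounded-denominator sense. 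The correct statement is that the polynomial Bohr set $\{z\le Z:\|P_D(z)\balpha\|<\rho\}$ has density $\gg_{d,K,\rho}1$ for \emph{every} $\balpha\in\bT^K$, and proving this requires a genuine major-arc/minor-arc dichotomy in which the major-arc case is handled by intersectivity of $P_D$ at \emph{all} moduli arising from rational approximations to the $\alp_j$ — this is the content of Lemmas \ref{lem8.7} and \ref{lem8.8}, resting on \cite[Corollary 6.9]{Cha2022} and the quantitative recurrence theorem \cite[Theorem 1]{LR2015}. Your argument covers only the minor-arc half.

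Two further gaps. First, your claim that the count is controlled by the $U^2$-norm in the $\bn$ variables because $L_1(\bn)=m$ is ``a single non-degenerate equation in $s\ge 2$ variables'' fails when $s=2$ (which is permitted: $s=2$, $t\ge s_0(d)-2$): a two-variable linear equation is not controlled by uniformity of one factor, and your Cauchy--Schwarz bound for the $f_{\sml}$ term gives only $O(N^{s-1}Z^t)$ with no gain when applied to $f_{\unf}$. In that case one must exploit the $\hat h$ factors, i.e.\ the restriction estimate for $\mu_D$ (Lemma \ref{lem6.3}), which is uniform in $D$ only because of Lucier's bound \eqref{eqn6.2} on $\gcd(P_D-P_D(0))$; your proposal never invokes this, whereas the paper's Lemma \ref{lem8.4} is built on it. Second, the main-term lower bound $\sum_{L_1(\bn)=m}\prod_i f_{\str}(n_i)\gg\del^sN^{s-1}$ for $m\in G$ does not follow from a lattice-point count together with $N^{-1}\sum_nf_{\str}(n)\ge\del/2$: after translating by $m\bv$ (using $\gcd(L_1)=1$ and almost-periodicity to absorb the shift) one is left with the translation-invariant equation $L_1(\bk)=0$, and the lower bound there is the supersaturation theorem of Frankl--Graham--R\"odl (Lemma \ref{lem8.10}), which must be cited or reproved; density alone does not give it.
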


Although Theorem \ref{thm8.1} may seem weaker than Theorem \ref{thm3.8}, they are in fact equivalent. This may be proved directly, however, as this argument may be applicable in other contexts, we instead encapsulate the proof strategy in the following combinatorial result, for which we have not found a reference. 
This lemma is essentially a finite version of the axiom of choice.

\begin{lemma}\label{lem8.2}
Let $U,V$ be non-empty sets such that $V$ is finite. Let $E\subseteq U\times V$. Suppose that for every $S\subseteq U$ with $|S|\leqslant |V|$ there exists $v\in V$ such that $(s,v)\in E$ for all $s\in S$. Then there exists $x\in V$ such that $(u,x)\in E$ for all $u\in U$.
\end{lemma}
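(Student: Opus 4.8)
The plan is to argue by contradiction, recasting the sought existence statement in terms of ``bad'' sets. For each $v\in V$, I would introduce
\[
B_v := \{\, u\in U : (u,v)\notin E \,\},
\]
the set of elements of $U$ that fail for the choice $v$. The desired conclusion is exactly that $B_x=\varnothing$ for some $x\in V$, so it suffices to derive a contradiction from the assumption that $B_v\neq\varnothing$ for every $v\in V$.

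Granting that assumption, I would use that $V$ is non-empty and finite to pick, for each $v\in V$, some element $u_v\in B_v$, and then set $S:=\{\,u_v : v\in V\,\}\subseteq U$. Since $S$ is the image of $V$ under $v\mapsto u_v$, we have $|S|\le|V|$, so the hypothesis of the lemma applies to $S$ and produces some $v_0\in V$ with $(s,v_0)\in E$ for all $s\in S$. Applying this with $s=u_{v_0}\in S$ gives $(u_{v_0},v_0)\in E$; but $u_{v_0}\in B_{v_0}$ means precisely that $(u_{v_0},v_0)\notin E$, a contradiction. Hence some $B_x$ is empty, which is the assertion of the lemma.

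I do not expect any genuine obstacle here; the only point worth flagging is the role of the finiteness of $V$. The selection $v\mapsto u_v$ is a choice function, but because $V$ is finite this is a finite choice and requires no appeal to the axiom of choice — in effect the lemma trades an infinite choice (over $U$, which may be infinite) for finitely many choices indexed by $V$. The combinatorial heart of the argument is merely the pigeonhole-style remark that one witness colour $v_0$ cannot be simultaneously ``good'' for a set $S$ that was deliberately built to contain, for \emph{every} colour $v$, an element that is ``bad'' for $v$. I would keep the write-up to essentially the two short paragraphs above.
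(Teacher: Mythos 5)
Your proof is correct and is essentially the paper's own argument: the paper likewise picks, for each $v\in V$, an element $s_v\in U$ with $(s_v,v)\notin E$ and applies the hypothesis to $S=\{s_v:v\in V\}$, phrased as a contrapositive rather than a contradiction. Your extra remark about only finitely many choices being needed matches the paper's framing of the lemma as "a finite version of the axiom of choice."
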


\begin{proof}
Suppose that for each $x\in V$ there exists $s_x \in U$ such that $(s_x , x)\notin E$. Taking $S=\{s_{x}:x\in V\}$ establishes the contrapositive.
\end{proof}

\begin{proof}[Proof of Theorem \ref{thm3.8} given Theorem \ref{thm8.1}]
In view of Proposition \ref{prop3.10}, it suffices to consider only the case where $\gcd(L_1)=1$. Let $V=[k]$ and set $U=\{\cA\subseteq[N]:|\cA|\geqslant\delta N\}$. Let $E$ denote the set of pairs $(\cA,k)$ such that the inequality (\ref{eqn8.1}) holds for $\cC_k$ with $\cA_i=\cA$. We conclude from Theorem \ref{thm8.1} and Lemma \ref{lem8.2} that Theorem \ref{thm3.8} holds with the implicit constant in (\ref{eqn3.10}) equal to the one in (\ref{eqn8.1}), and with the same $Z_{0}(D,r,\delta,L_1,L_2,P)$.
\end{proof}

\subsection{The arithmetic regularity lemma}

We now introduce the version of the arithmetic regularity lemma that we use to prove Theorem \ref{thm8.1}.
In the sequel, we write $\bT^K$ for the $K$-dimensional torus $(\bR/\bZ)^K$. This is equipped with a metric $(\balpha,\bbeta)\mapsto\lVert \balpha - \bbeta\rVert$, where
\begin{equation*}
     \lVert \btheta\rVert := \max_{1\leqslant i\leqslant K}\min_{n\in\bZ}|\theta_i - n| \qquad (\btheta=(\theta_1,\ldots,\theta_K) \in\T^K).
\end{equation*}
This allows us to define \emph{Lipschitz functions} on $\T^K$. Given a positive real number $H$, a function $F:\T^{K}\to\R$ is \emph{$H$-Lipschitz} if 
\begin{equation*}
    |F(\balpha) - F(\bbeta)| \leqslant H\lVert \balpha - \bbeta\rVert \qquad (\balpha,\bbeta \in\T^K).
\end{equation*}

\begin{lemma}[Arithmetic regularity lemma]\label{lem8.3}
	Let $r\in\N$, $\sig>0$, and let $\cF:\R_{\geqslant 0}\to\R_{\geqslant 0}$ be a monotone increasing function. Then there exists a positive integer $K_{0}(r;\sig,\cF)\in\N$ such that the following is true. Let $N\in\N$ and $f_{1},\ldots,f_r:[N]\to[0,1]$. Then there is a positive integer $K\leqslant K_{0}(r;\sig,\cF)$ and a phase $\btheta\in\T^{K}$ such that, for every $i\in[r]$, there is a decomposition
	\begin{equation*}
	f_{i}=f_{\str}^{(i)}+f_{\sml}^{(i)}+f_{\unf}^{(i)}
	\end{equation*}
	of $f_{i}$ into functions $f_{\str}^{(i)},f_{\sml}^{(i)},f_{\unf}^{(i)}:[N]\to[-1,1]$ with the following stipulations.
	\begin{enumerate}[\upshape(I)]
		\item\label{itemNon} The functions $f_{\str}^{(i)}$ and $f_{\str}^{(i)}+f_{\sml}^{(i)}$ take values in $[0,1]$.
		\item The function $f_{\sml}^{(i)}$ obeys the bound $\lVert f_{\sml}^{(i)}\rVert_{L^{2}(\Z)}\leqslant\sig\lVert 1_{[N]}\rVert_{L^{2}(\Z)}$.
		\item The function $f_{\unf}^{(i)}$ obeys the bound $\lVert \hat{f}_{\unf}^{(i)}\rVert_{\infty}\leqslant\lVert \hat{1}_{[N]}\rVert_{\infty}/\cF(K)$.
		\item\label{itemSum} The function $f_{\str}^{(i)}$ satisfies $\sum_{m=1}^{N}(f_{i}-f_{\str}^{(i)})(m)=0$.
		\item\label{itemStr} There exists a $K$-Lipschitz function $F_{i}:\T^{K}\to[0,1]$ such that $F_{i}(x\btheta )=f_{\str}^{(i)}(x)$ for all $x\in[N]$. 
	\end{enumerate}
\end{lemma}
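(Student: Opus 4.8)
The plan is to run the energy-increment argument of Green \cite{Gre2005B}, carried out for all $r$ functions $f_1,\dots,f_r$ at once, which is possible because one may take the union of the relevant frequency sets and add the energies. For a tuple $\btheta=(\theta_1,\dots,\theta_K)\in\T^K$ and a resolution $\rho\in(0,1)$, I would partition $\T^K$ into $\ll\rho^{-K}$ boxes of side $\rho$ and pull back along $x\mapsto x\btheta$ to obtain a partition $\cB=\cB(\btheta,\rho)$ of $[N]$. Writing $\bE(\,\cdot\mid\cB)$ for the associated conditional expectation, one has $\bE(f_i\mid\cB)(x)=G_i(x\btheta)$ for a step function $G_i\colon\T^K\to[0,1]$, and $\sum_{m\le N}(f_i-\bE(f_i\mid\cB))(m)=0$ automatically. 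The quantity to increment is $\cE(\cB):=\sum_{i\in[r]}\lVert\bE(f_i\mid\cB)\rVert_{L^2}^2$, the $L^2$-norm on $[N]$ being normalised so that $\cE(\cB)\in[0,r]$.

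Starting from $K=0$, given the current $\btheta,\rho,\cB$ I would test whether $\lVert\widehat{f_i-\bE(f_i\mid\cB)}\rVert_\infty\le\lVert\hat 1_{[N]}\rVert_\infty/\cG(K)$ for all $i$, where $\cG$ is a monotone function built from $\cF$ that also absorbs the Lipschitz losses arising below. If so, I stop. Otherwise $f_{i_0}-\bE(f_{i_0}\mid\cB)$ has a Fourier coefficient exceeding $\lVert\hat 1_{[N]}\rVert_\infty/\cG(K)$ at some frequency $\theta^{\ast}$ for some $i_0$; I adjoin $\theta^{\ast}$ to $\btheta$ and shrink $\rho$, which separates the new character from $\cB$ and forces $\cE$ to increase by $\gg\cG(K)^{-2}$. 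Since $\cE\le r$ and $K$ grows by exactly $1$ per step, the process halts; and --- this is the familiar ``growth function'' point --- the number of steps, hence the final $K$, is bounded in terms of $r$ and $\cG$ alone, because the threshold at each stage refers only to the \emph{current} dimension while the energy budget $r$ caps the step count. This produces $\btheta$ and an integer $K\le K_0(r;\sig,\cF)$ that is independent of $N$.

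To build the decomposition I would fix the final $\btheta,\rho,\cB$ and, choosing $\rho$ suitably within the range the iteration permits, also arrange that $\cB$ is ``$\sig$-regular'': at most $\sig^2N/4$ values $x\in[N]$ have $x\btheta$ within $\eta$ of a box boundary, for a suitable small $\eta$. Taking $\psi$ to be a smooth probability density on $\T^K$ concentrated at scale $\eta$, set $F_i:=G_i\ast\psi$, which is a Lipschitz map $\T^K\to[0,1]$ (after enlarging $K$, and $\cG$ accordingly, so that $K$ dominates the Lipschitz constant), and put
\[
f_{\str}^{(i)}(x):=F_i(x\btheta),\qquad f_{\unf}^{(i)}:=f_i-\bE(f_i\mid\cB),\qquad f_{\sml}^{(i)}:=\bE(f_i\mid\cB)-f_{\str}^{(i)}.
\]
Then (V) holds by construction, (III) by the stopping rule, and (I) because both $f_{\str}^{(i)}$ and $f_{\str}^{(i)}+f_{\sml}^{(i)}=\bE(f_i\mid\cB)$ are $[0,1]$-valued; (II) follows since $G_i-G_i\ast\psi$, read along the orbit, is $O(1)$ and supported, off a set of size $\le\sig^2N/4$, within $\eta$ of a box boundary. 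The mean-zero condition (IV) needs a mass defect $\sum_{m\le N}f_{\sml}^{(i)}(m)=O(\sig N)$ to be removed; I would handle this by running the whole argument on the rescalings $(1-\sig)f_i+\sig/2$, which keep a buffer of width $\sig/2$ away from $\{0,1\}$ so that a constant of size $O(\sig^2)$ can be absorbed into $F_i$ without leaving $[0,1]$, and undoing the rescaling at the end --- an affine change that commutes with conditional expectation, with convolution on $\T^K$, and with the Fourier transform away from the zero frequency.

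The main obstacle is not a single inequality but the bookkeeping in the iteration: guaranteeing that the final $K$, and hence $K_0$, depends only on $(r,\sig,\cF)$ even though the uniformity demanded of $f_{\unf}^{(i)}$ is $\cG(K)^{-1}$ with $K$ not known in advance. This is the standard growth-function subtlety in regularity lemmas, handled exactly as in \cite{Gre2005B} and \cite{Pre2021}; a secondary technical point is arranging the Lipschitz structured part, the $L^2$-small part, and the mean-zero normalisation to hold simultaneously, which is what forces the $\sig$-regular choice of $\cB$ and the affine-rescaling trick above.
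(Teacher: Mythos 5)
Your single-loop energy increment does not terminate in a bounded number of steps, and the justification you give for termination is the precise point at which the naive argument breaks. At step $j$ your dimension is $K=j$ and the guaranteed energy gain is $\gg\cG(j)^{-2}$; since condition (III) forces the stopping threshold to satisfy $\cG\geqslant\cF$ up to your Lipschitz losses, and $\cF$ is an \emph{arbitrary} increasing function (in \S8 of the paper it is chosen to grow extremely fast), the series $\sum_{j}\cG(j)^{-2}$ will typically converge: for $\cG(j)=2^{j}$ the total energy gained over infinitely many steps is $\ll\sum_j 4^{-j}<r$, so the budget $\cE\leqslant r$ ``caps'' nothing. The loop does halt for each fixed $N$ (eventually the factor separates points), but then $K$ depends on $N$, which is exactly what the lemma must exclude. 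The standard repair --- and the paper's actual argument, following \cite[Theorem 1.2.11]{Tao2012} --- is a double iteration: an inner loop in which the uniformity threshold is \emph{frozen} at $1/\cF_0(\mathrm{cx}(\cB_j))$ while one refines $\cB_j$ to $\cB_{j+1}$ (hence terminating in $O(\cF_0(\mathrm{cx}(\cB_j))^2)$ steps), and an outer loop of length $O(r\sig^{-2})$ controlled by pigeonholing on the monotone energy sequence to find consecutive factors $\cB_k\subset\cB_{k+1}$ with $\sum_i\bigl(\lVert\bE(f_i|\cB_{k+1})\rVert_2^2-\lVert\bE(f_i|\cB_k)\rVert_2^2\bigr)\leqslant\sig^2$. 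You cannot dismiss this as bookkeeping: in the paper the pigeonhole is also what \emph{produces} $f_{\sml}^{(i)}=\bE(f_i|\cB_{k+1})-\bE(f_i|\cB_k)$ (small in $L^2$ by Pythagoras) and $f_{\unf}^{(i)}=f_i-\bE(f_i|\cB_{k+1})$ (uniform because the inner loop ran to completion), so the two-level structure is load-bearing twice over.

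The rest of your construction is a genuinely different, and in principle workable, route to (I)--(V): you manufacture $f_{\sml}^{(i)}$ from the smoothing error $\bE(f_i|\cB)-G_i\ast\psi$ supported near box boundaries, rather than from an energy gap. But this choice creates the secondary problems you then have to patch, all of which the paper's choice avoids for free because $f_{\str}^{(i)}$ and $f_{\str}^{(i)}+f_{\sml}^{(i)}$ are literally conditional expectations of $f_i$ (hence $[0,1]$-valued with the correct mean). Two of your patches are shaky as written: undoing the affine rescaling sends $g_{\str}$, which after absorbing a constant may dip to $\sig/2-O(\sig^2)$, to $(g_{\str}-\sig/2)/(1-\sig)$, which can be negative, violating (I) and (V); and choosing $\rho$ (or the box offsets) \emph{after} the iteration to make the partition $\sig$-regular changes $\cB$ and hence potentially destroys the uniformity of $f_i-\bE(f_i|\cB)$ certified by the stopping rule --- the references avoid this by conditioning on a Lipschitz partition of unity from the outset. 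These are fixable; the termination gap is not a detail and needs the restructured iteration above.
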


\begin{proof}
	This is essentially \cite[Lemma 3.3]{Pre2021} and can be proved using the methods of \cite[Theorem 1.2.11]{Tao2012} or \cite[Theorem 5]{Ebe2016} (see also \cite[Lemma 3]{Sal2020}). 
	
	For the convenience of the reader, with reference to the arguments and notation of \cite{Tao2012}, we outline the minor modifications one needs to make to obtain the required result. Let $\cF_{0}:\R_{\geqslant 0}\to\R_{\geqslant 0}$ be defined by $\cF_{0}(x):=\cF(rx)$. By following the iterative procedure given in the proof of \cite[Theorem 1.2.11]{Tao2012} (with $F$ replaced by $\cF_0$), one obtains a sequence of factors $\cB_1^{(i)}\subset \cB_2^{(i)}\subset\ldots$ for each $i\in[r]$. The energies
$\lVert\mathbf{E}(f|\cB^{(i)}_{1})\rVert_{L^{2}([N])}^2$,
$\lVert\mathbf{E}(f|\cB^{(i)}_{2})\rVert_{L^{2}([N])}^2, \ldots$
are monotone increasing between $0$ and $1$, so it follows from
the pigeonhole principle that there exists $k\ll r\sig^{-2}$ such that
	\begin{equation*}
	    \max_{1\leqslant i\leqslant r}\left( \lVert\mathbf{E}(f|\cB^{(i)}_{k+1})\rVert_{L^{2}([N])}^2 -\lVert\mathbf{E}(f|\cB^{(i)}_{k})\rVert_{L^{2}([N])}^2\right)\leqslant \sig^{2}.
	\end{equation*}
	The choice of factors $\cB_j^{(i)}$ delivered by this argument then shows that, upon setting
	\begin{equation*}
	    f_{\str}^{(i)}:=\mathbf{E}(f_i|\cB_{k}),\quad
	    f_{\sml}^{(i)}:=\mathbf{E}(f_i|\cB_{k+1})-\mathbf{E}(f_i|\cB_{k}),\quad f_{\unf}^{(i)}:=f_i-\mathbf{E}(f_i|\cB_{k+1}),
	\end{equation*}
	properties (\ref{itemNon})-(\ref{itemSum}) hold with $\cF_0$ in place of $\cF$. We also have (\ref{itemStr}) but with some $\btheta^{(i)}\in\T^{K}$ for each $i\in[r]$ in place of the desired $\btheta$.
	To establish (\ref{itemStr}) in the form given above, we set $\btheta:=(\btheta^{(1)},\ldots,\btheta^{(r)})\in\T^{Kr}$. Thus, for each $i\in[r]$, we can define a projection map $\pi_{i}:\T^{Kr}\to \T^{K}$ such that $\pi_{i}(\btheta)=\btheta^{(i)}$, whence $f_{\str}^{(i)}(x)=F_{i}\circ\pi_{i}(x\btheta)$ for all $x\in[N]$. Since each $F_{i}\circ\pi_{i}$ is $Kr$-Lipschitz, and since $\cF_0(K)=\cF(Kr)$, we may replace $K$ with $Kr$ to complete the proof.
\end{proof}

To prove Theorem \ref{thm8.1}, we apply the arithmetic regularity lemma above to decompose the indicator functions $1_{\cA_i}$ of our dense sets $\cA_i\subseteq[N]$. As in \S\ref{sec5} and \S\ref{sec6}, where we focused our attention on a single weight function $\nu=\nu_b$, it is convenient for us to first study the consequences of applying the arithmetic regularity lemma to a single function $f$. In such instances, we omit the index $i$ and write $f=f_{\str}+f_{\sml}+f_{\unf}$ for the decomposition provided by Lemma~\ref{lem8.3}. One can think of these results as pertaining to $f=1_{\cA_i}$ for some $i\in[r]$, with the resulting conclusions being uniform in $i$. 

\bigskip

Given finitely supported functions $f_{1},\ldots,f_{s},g_{1},\ldots,g_{t}:\Z\to\bC$, define the counting operator
\begin{equation*}
\Lambda_{D}(f_{1},\ldots,f_{s};g_{1},\ldots,g_{t}):=\sum_{L_1(\bn)=L_{2}(P_D(\bz)) }f_{1}(n_{1})\cdots f_{s}(n_{s})g_{1}(z_{1})\cdots g_{t}(z_{t}).
\end{equation*}
As with the counting operator $\Phi$, we make use of the abbreviations
\[
\Lambda_D(f_1,\ldots, f_s;h) := \Lambda_D(f_1,\ldots,f_s;h,\ldots,h),
\qquad
\Lambda_D(f;h) := \Lambda_D(f,\ldots,f;h,\ldots,h),
\]
and, for finite $A, B \subset \bZ$:
\[
\Lambda_D(f_1,\ldots,f_s;B) := \Lambda_D(f_1,\ldots,f_s;1_B),
\qquad
\Lambda_D(A;B) := \Lambda_D(1_A;1_B).
\]

By a change of variables, one can relate $\Lambda_D$ to the counting operator $\Phi$ defined by (\ref{eqn7.1}) which we studied in \S\ref{sec7}. In particular, one can adapt Lemma \ref{lem7.1} to $\Lambda_D$ as follows.

\begin{lemma}[Fourier control]\label{lem8.4}
    Let $f_{1},\ldots,f_{s},g_{1},\ldots,g_{t}:\Z\to\bR$ be functions supported on $[N]$. Then for any $B\subseteq[Z]$, where $N=P_D(Z)$, we have
    \begin{equation*}
        |\Lambda_{D}(f_{1},\ldots,f_{s};B) -\Lambda_{D}(g_{1},\ldots,g_{s};B)| \ll
        \max_{1\leqslant i\leqslant s}(\| \hat f_{i}-\hat g_{i} \|_\infty/N)^{1/(2s+2t)}
        N^{s-1}Z^{t}.
    \end{equation*}
\end{lemma}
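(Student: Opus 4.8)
The plan is to mimic the proof of Lemma \ref{lem7.1}, replacing the polynomial phase in the $z$-variables by $P_D$ and absorbing the resulting weight $N/Z$. First I would apply orthogonality to write
\[
\Lambda_D(f_1,\ldots,f_s;B) = \int_{\bT} \prod_{j\le s}\hat f_j(a_j\alp)\cdot\prod_{\ell\le t}\widehat{1_B\circ P_D}(-c_\ell\alp)\,\d\alp,
\]
where $\widehat{1_B\circ P_D}(\alp) = \sum_{z\in B}e(\alp P_D(z))$, and subtract the corresponding expression with $g_i$ in place of $f_i$. Telescoping over the $s$ slots as in the proof of Theorem \ref{thm3.4} in \S\ref{sec7} (the functions $\bu^{(\ell)}$ there), it suffices to bound a single term in which exactly one factor is $\hat f_i - \hat g_i$ and the remaining $f$-factors are either $f_j$ or $g_j$. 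Crucially, since $|f_j|,|g_j|\le 1_{[N]}$ (the $f_{\str}^{(i)}$, etc., are $[0,1]$-valued and supported on $[N]$ by Lemma \ref{lem8.3}), and since $1_{[N]}\le\nu+1_{[N]}$ trivially, these factors are majorised by $\nu+1_{[N]}$, so the restriction estimate Lemma \ref{lem6.2} applies to each of them.

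Next I would apply Hölder's inequality with exponent $s+t$ to split the integral into a product of $L^{s+t}$-norms of the individual Fourier transforms, exactly as in Lemma \ref{lem7.1}. For the $z$-variable factors, one observes that $|1_B\circ P_D|\le 1_{[Z]}\circ P_D = (Z/N)\mu_D$ pointwise (recall $\mu_D = (N/Z)1_{P_D([Z])}$ from (\ref{eqn6.1})), and periodicity lets one remove the dilation by $c_\ell$; hence $\int_\bT|\widehat{1_B\circ P_D}(\alp)|^{s+t}\,\d\alp \ll (Z/N)^{s+t}\int_\bT|\hat\phi(\alp)|^{s+t}\,\d\alp$ for $|\phi|\le\mu_D$, which is $\ll (Z/N)^{s+t}N^{s+t-1}$ by Lemma \ref{lem6.3} (valid since $s+t\ge s_0(d) = 2T+1 > 2T$). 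For the $n$-variable factors, one extracts a factor $\|\hat f_i - \hat g_i\|_\infty^{1/2}$ from the distinguished slot and applies Lemma \ref{lem6.2} to the remaining $L^{s+t-1/2}$-norms of the $\nu+1_{[N]}$-majorised functions, picking up $N^{s+t-3/2}$ from that slot and $N^{s+t-1}$ from each of the other $s-1$ slots. Reassembling the exponents $1/(s+t)$ from Hölder gives $\|\hat f_i-\hat g_i\|_\infty^{1/(2(s+t))}\cdot N^{-1/(2(s+t))}$ as the gain and $N^{s-1}Z^t$ as the main term, which is the claimed bound. Summing the $s$ telescoping contributions only changes the implied constant.

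The main obstacle — really the only nontrivial point — is ensuring the restriction estimates are applicable to all the factors that arise after telescoping. This comes down to checking that each of $f_{\str}$, $f_{\str}+f_{\sml}$, and their various sums/differences appearing as the $u^{(\ell)}_j$ is dominated in absolute value by $\nu+1_{[N]}$ (indeed by $1_{[N]}$, but $\nu+1_{[N]}$ is what Lemma \ref{lem6.2} is phrased for), and that the $z$-side weight $1_B\circ P_D$ is dominated by $(Z/N)\mu_D$; both are immediate from the definitions in Lemma \ref{lem8.3} and (\ref{eqn6.1}). One should also note that the dilations $n\mapsto a_j n$ and $z\mapsto c_\ell z$ inside the Fourier transforms are harmless: the $L^p(\bT)$-norm of $\alp\mapsto\hat\psi(c\alp)$ equals that of $\hat\psi$ for nonzero integer $c$ by periodicity, exactly as exploited in Lemma \ref{lem7.1}. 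Everything else is the same bookkeeping of Hölder exponents already carried out there.
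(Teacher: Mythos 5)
Your proposal is correct and follows essentially the same route as the paper: the paper defines the pushforward weight $h$ with $\Lambda_D(\cdot;B)=\Phi(\cdot;h)$, observes $|NZ^{-1}h|\le\mu_D$, and then invokes the telescoping identity together with Lemma \ref{lem7.1} (whose proof you have simply unpacked via H\"older and the restriction estimates of Lemmas \ref{lem6.2} and \ref{lem6.3}) before rescaling by $(Z/N)^t$. Your exponent bookkeeping and the domination $|1_B\circ P_D|\le (Z/N)\mu_D$ match the paper's argument exactly.
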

\begin{proof}
   Define the function $h:\Z\to\bR$ by
    \begin{equation*}
        h(x):=
        \begin{cases}
            1_{B}(z),\quad &\text{if there exists }z\in[Z]\text{ such that }x=P_{D}(z) \\
            0, &\text{otherwise}.
        \end{cases}
    \end{equation*}
    Now note that, for all finitely-supported $F_{1},\ldots,F_{s}:\Z\to\bR$, we have
    \begin{equation*}
        \Lambda_{D}(F_{1},\ldots,F_{s};B)=\Phi(F_{1},\ldots,F_{s};h).
    \end{equation*}
    Let $\mu_D$ be given by (\ref{eqn6.1}).
    Since $|h|\leqslant (N^{-1}Z)\mu_{D}$, we deduce from the telescoping identity and Lemma \ref{lem7.1}, as in \S\ref{sec7}, that
    \begin{align*}
        |\Phi(f_{1},\ldots,f_{s};(NZ^{-1})h) - \Phi(g_{1},\ldots,g_{s};(NZ^{-1})h)| \ll \max_{1\leqslant i\leqslant s}(\| \hat f_{i}-\hat g_{i} \|_\infty/N)^{1/(2s+2t)}
        N^{s+t-1}.
    \end{align*}
    Here we have used the trivial bound $\lVert \hat{f}_i\rVert_{\infty}\leqslant N$ for all $i\in[s]$.
    Multiplying both sides by $(N^{-1}Z)^t$ completes the proof.
\end{proof}

An immediate consequence of this result is that we can show that $\Lambda_D(f;B)$ is well-approximated by $\Lambda(f_\str +f_\sml;B)$.

\begin{lemma}[Removing $f_{\unf}$]\label{lem8.5}
	Let $f:\Z\to[0,1]$ be supported on $[N]$. Let $\sig>0$, and let $\cF:\R_{\geqslant 0}\to\R_{\geqslant 0}$ be a monotone increasing function. Let $f_{\str},f_{\sml},f_{\unf}$ be the functions provided by applying Lemma \ref{lem8.3} to $f$. Then for any $B\subseteq[Z]$, we have
	\begin{equation*}
	\lvert\Lambda_{D}(f;B)-\Lambda_{D}(f_{\str}+f_{\sml};B)\rvert\ll_{P}N^{s-1}Z^{t}\cF(K)^{-1/(2s+2t)}.
	\end{equation*}
\end{lemma}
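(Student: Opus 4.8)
The plan is to apply the Fourier control estimate of Lemma \ref{lem8.4} with a telescoping argument that peels off one copy of $f_{\unf}$ at a time, exactly as in the proof of Lemma \ref{lem7.1} and the telescoping step in \S\ref{sec7}. Write $f = f_1 + f_{\unf}$, where $f_1 := f_{\str} + f_{\sml}$. Note first that $f$ and $f_1$ are supported on $[N]$ and take values in a bounded interval: $f_1 = f_{\str} + f_{\sml}$ takes values in $[0,1]$ by part (I) of Lemma \ref{lem8.3}, while $f_{\unf} = f - f_1$ takes values in $[-1,1]$. In particular both $f$ and $f_1$ satisfy the hypotheses of Lemma \ref{lem8.4} (after harmlessly rescaling, since that lemma is stated for functions into $\bR$ supported on $[N]$ with the trivial Fourier bound $\|\hat f\|_\infty \le N$ built into its proof).

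Next I would expand the difference $\Lambda_D(f;B) - \Lambda_D(f_1;B)$ using the multilinearity of $\Lambda_D$ in its first $s$ arguments. Writing $\bw^{(\ell)} = (w^{(\ell)}_1,\ldots,w^{(\ell)}_s)$ with $w^{(\ell)}_j = f_1$ for $j < \ell$, $w^{(\ell)}_\ell = f_{\unf}$, and $w^{(\ell)}_j = f$ for $j > \ell$, the telescoping identity gives
\begin{equation*}
\Lambda_D(f;B) - \Lambda_D(f_1;B) = \sum_{\ell=1}^{s} \Lambda_D(\bw^{(\ell)};B).
\end{equation*}
For each $\ell$, I apply Lemma \ref{lem8.4} comparing $\Lambda_D(\bw^{(\ell)};B)$ with $\Lambda_D(\bv^{(\ell)};B)$, where $\bv^{(\ell)}$ agrees with $\bw^{(\ell)}$ except that the $\ell$th slot is replaced by $0$; the latter counting operator vanishes, so
\begin{equation*}
\Lambda_D(\bw^{(\ell)};B) \ll \bigl(\|\widehat{f_{\unf}}\|_\infty / N\bigr)^{1/(2s+2t)} N^{s-1} Z^t.
\end{equation*}
Here property (III) of Lemma \ref{lem8.3} gives $\|\widehat{f_{\unf}}\|_\infty \le \|\hat 1_{[N]}\|_\infty / \cF(K) \le N/\cF(K)$. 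Summing over the $s$ values of $\ell$ and absorbing the constant $s$ into the $\ll_P$ then yields the claimed bound $\ll_P N^{s-1} Z^t \cF(K)^{-1/(2s+2t)}$.

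I expect the only genuine subtlety — and hence the "main obstacle" — to be checking that Lemma \ref{lem8.4} genuinely applies slot-by-slot: that lemma is stated as a two-term comparison $\Lambda_D(f_1,\ldots,f_s;B) - \Lambda_D(g_1,\ldots,g_s;B)$ with the error governed by $\max_i \|\hat f_i - \hat g_i\|_\infty$, and one must confirm, by re-reading its proof (which itself reduces to Lemma \ref{lem7.1} via the telescoping identity), that it remains valid when several of the functions differ, with the error then governed by the sum (equivalently the max, up to the harmless factor $s$) of the individual Fourier discrepancies. This is routine: the proof of Lemma \ref{lem7.1} only ever uses the trivial bounds $\|\hat f_j\|_\infty, \|\hat g_j\|_\infty \le N$ together with the restriction estimates of Lemmas \ref{lem6.2} and \ref{lem6.3}, all of which hold for $f$, $f_1$, and $f_{\unf}$ once one observes that these functions are bounded by $1_{[N]}$ up to an absolute constant (so in particular dominated by $\nu + 1_{[N]}$ and by $\mu_D$ after trivial rescaling). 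Everything else in the argument is bookkeeping.
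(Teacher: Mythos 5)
Your argument is correct and is essentially the paper's proof: the paper simply applies Lemma \ref{lem8.4} once with $f_i=f$ and $g_i=f_{\str}+f_{\sml}$ for all $i$, so that $\hat f_i-\hat g_i=\hat f_{\unf}$, and then invokes property (III) of Lemma \ref{lem8.3} to bound $\|\hat f_{\unf}\|_\infty\le N/\cF(K)$. Your slot-by-slot telescoping is redundant (Lemma \ref{lem8.4} already permits all $s$ functions to differ simultaneously, with the error controlled by the maximum discrepancy, since the telescoping is built into its proof via Lemma \ref{lem7.1}), but it is harmless and yields the same bound.
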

\begin{proof}
This follows immediately from Lemmas \ref{lem8.3} and \ref{lem8.4} with $f_{i}=f$ and $g_{i}=f_{\str}+f_{\sml}$ for all $i\in[s]$.
\end{proof}

\subsection{Polynomial Bohr sets}

Having removed $f_\unf$, it remains to obtain a lower bound for the quantity $\Lambda(f_\str +f_\sml;B)$, thereby producing a lower bound for $\Lambda(f;B)$. As in typical applications of the arithmetic regularity lemma, this is accomplished by exploiting the `almost-periodicity' of the function $f_\str$. Explicitly, this is the observation that, as $F$ is a Lipschitz function, we have $f_\str(n+d)\approx f_\str(n)$ whenever $n,n+d\in[N]$ are such that $\lVert d\btheta\rVert$ is small. The set of such $d$ is known as a \emph{Bohr set}. Since we are interested in the case where $d=P_D(z)$ for some $z\in[Z]$, we therefore need to consider \emph{polynomial Bohr sets}, which are defined as follows.

\begin{defn}[Bohr sets]
	Let $K\in\N$, $\rho>0$, and $\balpha\in\T^{K}$. Let $Q\in\Z[X]$ be an integer polynomial of positive degree. The (\emph{polynomial}) \emph{Bohr set} $\bohr_{Q}(\balpha,\rho)$ is the set
	\begin{equation*}
		\bohr_{Q}(\balpha,\rho):=\{n\in\N:\lVert Q(n)\balpha\rVert<\rho\}=\bigcap_{i=1}^{K}\{n\in\N: \lVert Q(n)\alpha_{i}\rVert<\rho\}.
	\end{equation*}
\end{defn}

Bohr sets are well-studied objects in additive combinatorics and analytic number theory \cite[\S4.4]{TV2006}.
In the classical setting $Q(n) = n$, it is well known that the Bohr set has positive lower density.
For our applications, we only need to ensure that
\begin{equation*}
    |\bohr_{Q}(\balpha,\rho)\cap[Z]|\gg_{d,K,\rho} Z
\end{equation*}
for $Z$ large enough relative to $Q,K,\rho$,
where $Q$ is an intersective polynomial of degree $d$. The crucial aspect of this bound which we emphasise is that the implicit constant does not depend on the frequency $\balpha$ nor on the coefficients of $Q$. Note that intersectivity is necessary even to ensure that the Bohr set is non-empty, for otherwise there is a local obstruction.

\bigskip

We start with the case $Q(0)=0$, which was investigated in \cite{Cha2022}.
\begin{lemma}\label{lem8.7}
Let $K\in\N$, $\rho>0$ and $\balpha\in\T^{K}$. Let $Q\in\Z[X]$ be a polynomial of degree $d\in\N$ such that $Q(0)=0$. Then there exists a positive real number $\Delta_{0}(\rho)=\Delta_{0}(d, K, \rho)$ and a positive integer $Z_{0}(d,K,\rho)$ such that
if $Z\geqslant Z_{0}(d,K,\rho)$
then
\begin{equation*}
    |\bohr_{Q}(\balpha,\rho)\cap[Z]|\geqslant \Delta_{0}(\rho)Z.
\end{equation*}

\end{lemma}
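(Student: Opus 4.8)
The plan is to reduce the statement to a quantitative equidistribution dichotomy for the polynomial orbit $n\mapsto Q(n)\balpha$ on $[Z]$ and then to recurse. First I would fix, once and for all, a Lipschitz minorant $\chi$ of the indicator of the cube $\{x\in\T^K:\lVert x\rVert<\rho\}$ with $\int_{\T^K}\chi\gg_K\rho^K$ and Lipschitz norm $O_K(\rho^{-1})$, so that
\[
|\bohr_Q(\balpha,\rho)\cap[Z]|\;\ge\;\sum_{n\le Z}\chi\bigl(Q(n)\balpha\bigr).
\]
Note that the hypothesis $Q(0)=0$ already gives $0\in\bohr_Q(\balpha,\rho)$, which is what makes the set non-empty and, more to the point, will be reused crucially below.

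Next I would apply a quantitative Weyl-type equidistribution theorem for a single polynomial sequence valued in $\T^K$ (with explicit bounds, as obtained by Weyl differencing). This furnishes a dichotomy for $(Q(n)\balpha)_{n\le Z}$: either it is $\eta$-equidistributed for an $\eta=\eta(d,K,\rho)$ as small as we please, in which case $\sum_{n\le Z}\chi(Q(n)\balpha)=(\int\chi+O(\eta))Z\gg_{d,K}\rho^KZ$ and we are done; or there is a nonzero $\bk\in\Z^K$ with $\lVert\bk\rVert_\infty\le Q^{*}=Q^{*}(d,K,\rho)$ and a modulus $q\le Q^{*}$ such that $\lVert q(\bk\cdot\balpha)c_j\rVert\le Q^{*}Z^{-j}$ for every coefficient $c_j$ of $Q$ (that is, all coefficients of $(\bk\cdot\balpha)Q$ are simultaneously major-arc). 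In the structured case I would restrict $n$ to the sub-progression $q\Z\cap[Z]$: because $Q(0)=0$, the restricted polynomial $Q_q(m):=Q(qm)$ again has zero constant term, so $\bohr_Q(\balpha,\rho)\cap[Z]\supseteq q\cdot\bigl(\bohr_{Q_q}(\balpha,\rho)\cap[Z/q]\bigr)$ with \emph{no inhomogeneous shift} appearing — this shift-free reduction is exactly what the hypothesis $Q(0)=0$ buys us. One then records that $\balpha$ has become "more rational" relative to $Q_q$ (the combined direction $\bk\cdot\balpha$ is now within $O(Q^{*}q^{d-1}Z^{-j})$ of an integer when multiplied by each coefficient $c_jq^j$ of $Q_q$), and iterates, organising the induction either on the number of frequencies $K$ — peeling off the direction $\bk\cdot\balpha$ once it is pinned to a rational with bounded denominator, thereby passing to $K-1$ frequencies on a bounded-index subprogression — or on a complexity parameter combining $d$ with a denominator bound. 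The base case $K=0$ is trivial, and the linear case underlying $K=1$ is the classical fact that $\{n\le Z:\lVert n\bfbet\rVert<\rho\}$ has size $\gg_K\rho^KZ$ uniformly in $\bfbet$, proved by pigeonholing the points $n\bfbet$ into $O_K(\rho^{-K})$ cubes. (Alternatively, and most economically since the $Q(0)=0$ regime is already treated in \cite{Cha2022}, one may simply invoke the corresponding recurrence estimate there after the trivial normalisation.)

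The main obstacle is the second, non-equidistributed branch. One must run the iteration so that it terminates after $O_{d,K,\rho}(1)$ steps, with the accumulated common difference bounded in terms of $d,K,\rho$ only, and with every error term controlled uniformly in $\balpha$ and in the coefficients of $Q$. This uniformity — forced on us by the remark after the lemma, and by the fact that there are infinitely many degree-$d$ polynomials, so no compactness argument is available — is precisely why the quantitative Weyl input, paired with the shift-free reduction afforded by $Q(0)=0$, is essential rather than cosmetic. The only genuinely delicate bookkeeping is to verify that the density lower bound does not degenerate through the iteration; one expects it to come out of the shape $\Delta_0(d,K,\rho)\gg(\rho/Q^{*})^{O_{d,K}(1)}$, with $Z_0(d,K,\rho)$ taken large enough to absorb all the $Z^{-j}$ errors generated along the way.
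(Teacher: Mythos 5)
Your argument is viable, but it follows a genuinely different (and much heavier) route than the paper. The paper's proof is a three-line reduction: writing $Q(X)=\sum_{i=1}^{d}a_iX^i$, the triangle inequality gives $\bohr_Q(\balpha,\rho)\supseteq\bigcap_{i=1}^{d}\bohr_{X^i}(a_i\balpha,\rho/d)$, reducing everything to monomial Bohr sets; then \cite[Corollary 6.9]{Cha2022} supplies the key recurrence input, namely that there is $M\ll_{d,K,\rho}1$ such that $\{x,2x,\ldots,Mx\}$ meets $\bohr_Q(\balpha,\rho)$ for \emph{every} $x\in\N$ (multiplicative syndeticity); finally \cite[Lemma 4.2]{CLP2021} converts this syndeticity into the density bound $|\bohr_Q(\balpha,\rho)\cap[Z]|\geqslant Z/(2M^2)$ for $Z\geqslant M$. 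The uniformity in $\balpha$ and in the coefficients $a_i$, which you correctly identify as the crux, is inherited for free from the cited results. Your proposal instead re-proves the recurrence from scratch via the Weyl equidistribution dichotomy and a density-increment iteration on subprogressions, exploiting $Q(0)=0$ to keep the reduction shift-free. This is sound in outline and self-contained, but the step you wave at --- peeling off the pinned direction $\bk\cdot\balpha$ to pass from $K$ to $K-1$ frequencies --- requires completing $\bk/\gcd(\bk)$ to a basis of $\Z^K$ with controlled entries and shrinking $\rho$ accordingly at each stage, and the termination bookkeeping is exactly the content of the external results the paper invokes. In short: your route buys self-containedness at the cost of redoing the quantitative Weyl analysis; the paper's route buys brevity by outsourcing it, and its monomial decomposition via the triangle inequality is the one genuinely clever step you did not use. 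Your closing parenthetical (just invoke \cite{Cha2022}) is essentially what the paper does, though note it still needs the decomposition into monomials and the syndetic-to-dense conversion lemma, not merely a ``trivial normalisation''.
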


\begin{proof}
Write $Q(X)=\sum_{i=1}^{d}a_{i}X^{i}$ for some $a_{1},\ldots,a_{d}\in\Z$. We abuse notation and write $\bohr_{i}(\balpha,\rho)$ for $\bohr_{P}(\balpha,\rho)$ when $P(X)=X^{i}$. The triangle inequality implies that
\begin{equation*}
    \bohr_{Q}(\balpha,\rho)\supseteq \bigcap_{i=1}^{d}\bohr_{i}(\bbeta^{(i)},\rho/d),
\end{equation*}
where $\bbeta^{(i)}:=a_{i}\balpha$.
From $d$ applications of \cite[Corollary 6.9]{Cha2022}, we deduce that there exists a positive integer $M\ll_{d,K,\rho} 1$ such that $\{x,2x,\ldots,Mx\}\cap\bohr_{Q}(\balpha,\rho)\neq\emptyset$ holds for all $x\in\N$. Thus, we conclude from \cite[Lemma 4.2]{CLP2021} that $|\bohr_{Q}(\balpha,\rho)\cap[Z]|\geqslant Z/(2M^{2})$ holds for all $Z\geqslant M$.
\end{proof}

We now consider the general case where $Q$ is an arbitrary intersective polynomial. To deduce the required result from Lemma \ref{lem8.7}, we need to know that
\[
\sup_{\balp \in \bT^K}
\min_{z \in [Z]}
\lVert Q(z)\balpha\rVert
\to 0 \qquad (Z \to \infty).
\]
As previously mentioned, the significant feature is uniformity in $\balpha$. Such a result follows from the much stronger quantitative bound given in \cite[Theorem 1]{LR2015}. Using this, we now establish a lower bound for the density of an arbitrary intersective polynomial Bohr set.

\begin{lemma}\label{lem8.8}
Let $K\in\N$, $\rho>0$ and $\balpha\in\T^{K}$. Let $Q\in\Z[X]$ be an intersective polynomial of degree $d\in\N$. Then there exists a positive real number $\Delta_1(\rho)=\Delta_1(d,K;\rho)$ and a positive integer $Z_{1}(Q,K,\rho)$ such that
if $Z\geqslant Z_{1}(Q,K,\rho)$ 
then
\begin{equation*}
    |\bohr_{Q}(\balpha,\rho)\cap[Z]|\geqslant \Delta_1(\rho)Z.
\end{equation*}
\end{lemma}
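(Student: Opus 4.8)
The plan is to reduce to the case $Q(0)=0$ handled in Lemma \ref{lem8.7} via a translation, the point being that the density constant produced there depends only on $d,K,\rho$ and not on the coefficients of the polynomial. Fix $\balpha\in\T^{K}$. The essential input is the uniform approximation property recorded just before the statement (a consequence of \cite[Theorem 1]{LR2015}): since $Q$ is intersective, there is a positive integer $Z^{*}=Z^{*}(Q,K,\rho)$ such that
\[
\sup_{\bbeta\in\T^{K}}\ \min_{z\in[Z^{*}]}\lVert Q(z)\bbeta\rVert<\rho/2 .
\]
In particular, I may pick $z_{0}=z_{0}(\balpha)\in[Z^{*}]$ with $\lVert Q(z_{0})\balpha\rVert<\rho/2$.

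Next I would introduce the translated polynomial $\widetilde{Q}(x):=Q(z_{0}+x)-Q(z_{0})\in\Z[X]$, which satisfies $\widetilde{Q}(0)=0$ and $\deg\widetilde{Q}=d$, and apply Lemma \ref{lem8.7} to $\widetilde{Q}$ with frequency $\balpha$ and radius $\rho/2$. This furnishes $\Delta_{0}(\rho/2)=\Delta_{0}(d,K,\rho/2)>0$ and a positive integer $Z^{\dagger}=Z^{\dagger}(d,K,\rho)$ with
\[
|\bohr_{\widetilde{Q}}(\balpha,\rho/2)\cap[Z']|\geqslant\Delta_{0}(\rho/2)\,Z'\qquad(Z'\geqslant Z^{\dagger}).
\]
For each $m\in\bohr_{\widetilde{Q}}(\balpha,\rho/2)$, the triangle inequality on $\T^{K}$ gives
\[
\lVert Q(z_{0}+m)\balpha\rVert\leqslant\lVert\widetilde{Q}(m)\balpha\rVert+\lVert Q(z_{0})\balpha\rVert<\rho,
\]
so $z_{0}+m\in\bohr_{Q}(\balpha,\rho)$; also $z_{0}+m\leqslant Z^{*}+m$.

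Finally I would set $Z'=Z-Z^{*}$. Then $z_{0}+m\in[Z]$ for all $m\in\bohr_{\widetilde{Q}}(\balpha,\rho/2)\cap[Z-Z^{*}]$, and since $m\mapsto z_{0}+m$ is injective,
\[
|\bohr_{Q}(\balpha,\rho)\cap[Z]|\geqslant|\bohr_{\widetilde{Q}}(\balpha,\rho/2)\cap[Z-Z^{*}]|\geqslant\Delta_{0}(\rho/2)(Z-Z^{*})
\]
as soon as $Z-Z^{*}\geqslant Z^{\dagger}$. Choosing $Z_{1}(Q,K,\rho):=\max(2Z^{*},\,Z^{*}+Z^{\dagger})$ guarantees $Z-Z^{*}\geqslant\max(Z^{\dagger},Z/2)$ for $Z\geqslant Z_{1}(Q,K,\rho)$, so the bound holds with $\Delta_{1}(\rho):=\Delta_{0}(\rho/2)/2=\Delta_{1}(d,K;\rho)$, which depends only on $d,K,\rho$ as required. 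The only substantive ingredient is the uniformity in $\balpha$ of the threshold $Z^{*}$, which is precisely what \cite[Theorem 1]{LR2015} supplies; the rest is the elementary translation trick together with the coefficient-independence of the constant in Lemma \ref{lem8.7} (and note that intersectivity of $\widetilde Q$ is not needed, only $\widetilde Q(0)=0$).
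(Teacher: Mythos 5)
Your proposal is correct and follows essentially the same route as the paper: both use \cite[Theorem 1]{LR2015} to locate, uniformly in $\balpha$, a point $z_0$ with $\lVert Q(z_0)\balpha\rVert<\rho/2$, translate to $\widetilde Q(x)=Q(z_0+x)-Q(z_0)$ so that Lemma \ref{lem8.7} applies at radius $\rho/2$, and finish with the triangle inequality. Your bookkeeping of the thresholds $Z^{*},Z^{\dagger}$ is just a slightly more explicit version of the paper's choice $t<Z/2$ and interval $[Z/2]$, and your observation that only $\widetilde Q(0)=0$ (not intersectivity of $\widetilde Q$) is needed for Lemma \ref{lem8.7} is accurate.
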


\begin{proof}
If $Z$ is sufficiently large in terms of $(Q,K,\rho)$, then it follows from \cite[Theorem 1]{LR2015} that there exists $t\in\bohr_{Q}(\balpha,\rho/2)$ with $t<Z/2$. Let $P(X):=Q(X+t)-Q(t)$. Since $P(0)=0$, Lemma \ref{lem8.7} ensures that
\begin{equation*}
    |\bohr_{P}(\balp,\rho/2)\cap[Z/2]|\gg_{d,K,\rho} Z.
\end{equation*}
By the triangle inequality, we now have
\begin{equation*}
    \left\lbrace t+x: x\in (\bohr_{P}(\balpha,\rho/2)\cap[Z/2])\right\rbrace\subseteq \bohr_{Q}(\balpha,\rho)\cap[Z],
\end{equation*}
from which the desired bound follows.
\end{proof}

\subsection{Completing the proof of Theorem \ref{thm8.1}}

Recall that the coefficients of $L_1$ are coprime. This implies that there exists $\bv\in\Z^{s}$ whose entries have size $O_{L_1}(1)$ such that $L_1(\bv)=1$.
Thus, for any finitely supported
$f_{1},\ldots,f_{s},g_{1},\ldots,g_{t}:\Z\to\bC$, we may write
\begin{equation*}
\Lambda_{D}(f_{1},\ldots,f_{s};g_{1},\ldots,g_{t})=\sum_{{\bz\in\Z^{t}}}g_{1}(z_{1})\cdots g_{t}(z_{t})\Psi_{\bz}(f_{1},\ldots,f_{s}),
\end{equation*}
where
\[
\Psi_{\bz}(f_{1},\ldots,f_{s}):=\sum_{L_1(\bn)=0}\prod_{i=1}^{s}f_{i}(n_{i}+v_{i}L_2(P_D(\bz))).
\]
For brevity, we write $\Psi_{\bz}(f):=\Psi_{\bz}(f,\ldots,f)$. Following \cite[\S6.1]{Cha2022}, we proceed to study these auxiliary counting operators $\Psi_{\bz}$, with a view towards obtaining a lower bound for $\Lambda_{D}$ by summing over $\bz$ lying in a polynomial Bohr set.

\begin{lemma}[Generalised von Neumann for $\Psi$]\label{lem8.9}
	Let $\bz\in\Z^{t}$, and let $\Psi_{\bz}$ be defined as above. If $f,g:[N]\to[0,1]$, then
	\begin{equation*}
	\lvert\Psi_{\bz}(f)-\Psi_{\bz}(g)\rvert\leqslant s N^{s-1}(\lVert f-g\rVert_{2}^{2}/N)^{1/2}.
	\end{equation*}
\end{lemma}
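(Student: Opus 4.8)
The plan is to run the standard generalised von Neumann argument for the single linear constraint $L_1(\bn)=0$, using nothing more than the trivial pointwise bounds $|f|,|g|\leqslant 1_{[N]}$ and the non-degeneracy of $L_1$. Recall that $L_1$ is a non-degenerate linear form with $L_1(1,\ldots,1)=0$; since a non-degenerate form in a single variable cannot vanish at $(1)$, this forces $s\geqslant 2$, which is the one structural fact we shall need.

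Fix $\bz\in\bZ^{t}$ and abbreviate $c:=L_2(P_D(\bz))\in\bZ$, so that $\Psi_{\bz}$ is the multilinear form $\Psi_{\bz}(f_{1},\ldots,f_{s})=\sum_{L_1(\bn)=0}\prod_{i=1}^{s}f_{i}(n_{i}+v_{i}c)$. First I would apply the telescoping identity
\[
\Psi_{\bz}(f)-\Psi_{\bz}(g)=\sum_{\ell=1}^{s}\Psi_{\bz}\bigl(\underbrace{g,\ldots,g}_{\ell-1},\,f-g,\,\underbrace{f,\ldots,f}_{s-\ell}\bigr),
\]
which is valid by linearity of $\Psi_{\bz}$ in each slot, so that it suffices to bound each of the $s$ terms on the right by $N^{s-1}(\lVert f-g\rVert_{2}^{2}/N)^{1/2}$.

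For a fixed $\ell$, I would pass to absolute values and use $|f|,|g|,|f-g|\leqslant 1_{[N]}$ to dominate the $\ell$-th term by $\sum_{\bn:\,L_1(\bn)=0}\bigl(\prod_{i\neq\ell}1_{[N]}(n_{i}+v_{i}c)\bigr)\lvert f-g\rvert(n_{\ell}+v_{\ell}c)$. Now choose any index $j\neq\ell$ (possible precisely because $s\geqslant 2$); since the coefficient of $x_{j}$ in $L_1$ is non-zero, the equation $L_1(\bn)=0$ determines $n_{j}$ uniquely in terms of the remaining coordinates. Summing freely over the $s-2$ coordinates $n_{i}$ with $i\notin\{j,\ell\}$ produces a factor $N^{s-2}$, while summing over $n_{\ell}$ gives $\sum_{n_{\ell}}\lvert f-g\rvert(n_{\ell}+v_{\ell}c)=\lVert f-g\rVert_{1}$, the translation by $v_{\ell}c$ being harmless. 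Hence each term is at most $N^{s-2}\lVert f-g\rVert_{1}$. Finally, since $f-g$ is supported on $[N]$, one application of Cauchy--Schwarz gives $\lVert f-g\rVert_{1}\leqslant N^{1/2}\lVert f-g\rVert_{2}=N(\lVert f-g\rVert_{2}^{2}/N)^{1/2}$; combining this with the telescoping identity and summing over $\ell\in[s]$ yields the claimed estimate with the stated constant $s$.

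There is no genuine obstacle here. The only two points deserving attention are the observation that $s\geqslant 2$, which guarantees the existence of an index $j$ available to absorb the constraint $L_1(\bn)=0$, and the bookkeeping that the shift $n_{i}\mapsto n_{i}+v_{i}c$ merely translates the interval $[N]$ and leaves all $\ell^{1}$- and $\ell^{2}$-norms unchanged. The argument is a simpler cousin of Lemma \ref{lem7.1}: only one linear equation is in play, so a single Cauchy--Schwarz step replaces the Hölder and restriction estimates used there, and no Fourier analysis is needed.
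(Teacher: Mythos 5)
Your proof is correct and follows essentially the same route as the paper: telescope over the $s$ slots, use one non-zero coefficient of $L_1$ to eliminate a variable and the support condition to count the remaining free variables, and extract $\lVert f-g\rVert_2$ via Cauchy--Schwarz. The only cosmetic difference is that the paper applies Cauchy--Schwarz to the outer sum over the distinguished variable before bounding the inner solution count by $N^{s-2}$, whereas you bound everything pointwise first and convert $\lVert f-g\rVert_1\leqslant N^{1/2}\lVert f-g\rVert_2$ at the end; the two computations are identical.
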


\begin{proof}
For all $f_{1},\ldots,f_{s}:\bZ\to[-1,1]$ supported on $[N]$, we proceed to show that
\begin{equation*}
	\lvert\Psi_{\bz}(f_{1},\ldots,f_{s})\rvert\leqslant (\lVert f_{j}\rVert_{2}^{2}/N)^{1/2}N^{s-1}
	\end{equation*}
	for all $j\in [s]$.
Once this is established, the lemma then follows from the telescoping identity
\begin{equation*}
    \Psi_{\bz}(f) - \Psi_{\bz}(g) =
    \sum_{i=1}^{s}\Psi_\bz(h_{1},\ldots,h_{i-1},h_{i}-g_{i},g_{i+1},\ldots,g_{s}),
\end{equation*}
where $h_{i}=f$ and $g_{i}=g$ for all $i\in[s]$.

We demonstrate only the case $j=s$, as the other cases follow by symmetry. Given $\bn=(n_{1},\ldots,n_{s})\in\Z^{s}$, we write $L_1(\bn)=L_1(\widetilde{\bn},n_{s})$, where $\widetilde{\bn}=(n_{1},\ldots,n_{s-1})\in\Z^{s-1}$.
Let $u=L_1(0,\ldots,0,v_s L_2(P_D(\bz)))\in\Z$. By the change of variables 
$n=n_{s}+v_s L_2(P_D(\bz))$, we have
\begin{equation*}
    \Psi_{\bz}(f_{1},\ldots,f_{s}) = \sum_{n\in\Z}f_{s}(n)\sum_{\substack{\widetilde{\bn}\in\Z^{s-1}\\ L_1(\widetilde{\bn},n)=u}}\prod_{i=1}^{s-1}f_{i}(n_{i}+v_{i}L_2(P_D(\bz))).
\end{equation*}
Note that $f_s(n)$ vanishes if $n\notin[N]$. Hence, by applying Cauchy--Schwarz to the outer sum over $n$, we deduce that
\begin{equation*}
    |\Psi_{\bz}(f_{1},\ldots,f_{s})|^{2} \leqslant \lVert f_{s}\rVert_{2}^{2}\sum_{n=1}^{N}\left(\sum_{\substack{\widetilde{\bn}\in\Z^{s-1}\\ L_1(\widetilde{\bn},n)=u}}\prod_{i=1}^{s-1}f_{i}(n_{i}+v_{i}L_2(P_D(\bz)))\right)^{2}.
\end{equation*}
Since $|f_{i}|\leqslant 1_{[N]}$ for all $i$, we deduce that the inner sum over $\widetilde{\bn}$ is bounded above by
\begin{equation*}
\# \{\widetilde{\bn}\in\Z^{s-1}:L_1(\widetilde{\bn},n)=u, \qquad
(n_{i}+v_{i}L_2 (P_D (\bz)))\in[N] \quad (i\in[s-1]) \}
\leqslant N^{s-2}.
\end{equation*}
Inserting this bound reveals that
\begin{equation*}
    |\Psi_{\bz}(f_{1},\ldots,f_{s})|^{2} \leqslant \lVert f_{s}\rVert_{2}^{2}\sum_{n=1}^{N}N^{2(s-2)}=(\lVert f_{i}\rVert_{2}^{2}/N)N^{2(s-1)},
\end{equation*}
and taking square roots completes the proof.
\end{proof}

Before we use this lemma to obtain a lower bound for $\Psi_{\bz}(f_{\str}+f_{\sml})$, we require two additional lemmas. Firstly, we require a functional version of the supersaturation result of Frankl, Graham, and R\"{o}dl \cite[Theorem 2]{FGR1988} for density regular linear equations.

\begin{lemma}\label{lem8.10}
    Let $\delta>0$, and let $f:[N]\to[0,1]$. If $\lVert f\rVert_{1}\geqslant \delta N$, then
    \begin{equation}\label{eqn8.2}
        \sum_{L_1(\bn)=0}f(n_{1})\cdots f(n_s) \gg_{L_1,\delta} N^{s-1}.
    \end{equation}
\end{lemma}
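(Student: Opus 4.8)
The plan is to reduce to the case where $f$ is the indicator function of a dense set, and then quote the supersaturation form of Roth's theorem for translation-invariant linear equations. First I would pass to the level set $A := \{ n \in [N] : f(n) \ge \del/2 \}$. Splitting $\lVert f \rVert_1 = \sum_{n\in A} f(n) + \sum_{n\notin A} f(n)$ and bounding the first sum by $|A|$ and the second by $(\del/2)N$ shows that $|A| \ge (\del/2)N$, while $f \ge (\del/2)1_A$ gives
\[
\sum_{L_1(\bn)=0} f(n_1)\cdots f(n_s) \ge (\del/2)^s \,\# \{ \bn \in A^s : L_1(\bn)=0 \}.
\]
So it suffices to prove $\# \{ \bn \in A^s : L_1(\bn)=0 \} \gg_{L_1,\del} N^{s-1}$ for every $A\subseteq[N]$ with $|A| \ge (\del/2)N$.

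For this, note that since $L_1$ is non-degenerate with $L_1(1,\ldots,1)=0$ we have $s \ge 2$, and the equation $L_1(\bn)=0$ is translation- and dilation-invariant. When $s=2$ the equation reduces to $n_1=n_2$, so the count equals $|A|\ge(\del/2)N=(\del/2)N^{s-1}$ directly. When $s\ge 3$, the single equation $L_1(\bn)=0$ is density regular — this is precisely the condition $L_1(1,\ldots,1)=0$ — so \cite[Theorem 2]{FGR1988} supplies a constant $c=c(L_1,\del)>0$ with
\[
\# \{ \bn \in A^s : L_1(\bn)=0 \} \ge c\, \# \{ \bn \in [N]^s : L_1(\bn)=0 \},
\]
and the latter count is $\asymp_{L_1} N^{s-1}$: writing $L_1(\bx)=a_1x_1+\cdots+a_sx_s$, the equation determines $n_s$ from $(n_1,\ldots,n_{s-1})$, and a positive proportion of the tuples $(n_1,\ldots,n_{s-1})\in[N]^{s-1}$ make $n_s=-(a_1n_1+\cdots+a_{s-1}n_{s-1})/a_s$ an integer lying in $[N]$. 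Combining this with the reduction of the first paragraph completes the proof.

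The only point that needs care is obtaining the exponent $N^{s-1}$ rather than a weaker bound: a hands-on Varnavides argument that merely averages a single bounded solution configuration over its translates and dilates yields only $\gg N^{2}$ solutions, which is insufficient once $s\ge 4$, and this multi-parameter bookkeeping is exactly what is packaged in \cite[Theorem 2]{FGR1988}. Everything else — the level-set reduction and the count of solutions in $[N]^s$ — is entirely routine.
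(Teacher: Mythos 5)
Your proof is correct and takes essentially the same route as the paper: both pass to the level set $\{n\in[N]:f(n)\geqslant \delta/2\}$ (the popularity principle), bound the sum below by $(\delta/2)^s$ times the count of solutions over that set, and then invoke \cite[Theorem 2]{FGR1988}. Your additional remarks (the separate treatment of $s=2$ and the verification that $[N]^s$ contains $\asymp N^{s-1}$ solutions) are harmless elaborations of what the paper leaves implicit.
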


\begin{proof}
    Let $\Omega=\{x\in[N]:f(x)\geqslant \delta/2\}$. The popularity principle \cite[Exercise 1.1.4]{TV2006} implies that $|\Omega|\geqslant (\delta/2)N$, and so
    \begin{equation*}
        \sum_{L_1(\bn)=0}\prod_{i=1}^{s}f(n_{i})\geqslant 
        \sum_{L_1(\bn)=0}\prod_{i=1}^{s}\left((\delta/2)1_{\Omega}(n_{i})\right)=(\delta/2)^{s}|\{\bn\in\Omega^{s}: L_1(\bn)=0\}|.
    \end{equation*}
    Since $L_1(1,\ldots,1)=0$, the required bound now follows from \cite[Theorem 2]{FGR1988}.
\end{proof}

\begin{remark}
Alternatively, one can prove Lemma \ref{lem8.10} without using \cite[Theorem 2]{FGR1988}. After applying the arithmetic regularity lemma (Lemma \ref{lem8.3}), one can then show that the sum (\ref{eqn8.2}) for $f_\str$ is $\gg_{L_1,\delta}N^{s-1}$ by restricting to a sum over $n_1,\ldots,n_s$ lying in a linear Bohr set (see \cite[\S6]{Cha2022} for further details).
\end{remark}

As mentioned previously, we intend to make use of the almost periodicity of $f_\str$ to obtain a lower bound for $\Psi_{\bz}(f_{\str}+f_{\sml})$ when $\bz$ lies in an intersective polynomial Bohr set. However, we have to be conscious of the fact that we are relying on the relation $f_{\str}(n)=F(n\btheta)$, which only holds for $n\in[N]$. To guarantee that quantities of the form $n_{i} + v_{i}L_{2}(P_{D}(z_{j}))$ lie in $[N]$, we restrict our variables according to $(\bn,\bz)\in[c(\eta)N,(1-c(\eta)N]^s\times [\eta Z]^t$, for some sufficiently small $\eta>0$ and some corresponding quantity $c(\eta)>0$ such that $c(\eta)\to 0^{+}$ as $\eta\to 0^{+}$. Moreover, since our final bound (\ref{eqn3.10}) does not depend on $D$, we need to ensure that the decay rate of $c(\eta)$ is independent of $D$. This is accomplished by the following simple lemma on polynomial growth.

\begin{lemma}\label{lem8.12}
Let $P$ be an intersective integer polynomial of degree $d\in\N$ satisfying (\ref{eqn3.5}). Then there exists $M_0(P) \in \bN$ such that the following is true. Let $\eta\in(0,1)$, let $D\in\N$, and define the auxiliary polynomial $P_D$ by (\ref{eqn3.9}). If 
$M\geqslant (M_0(P) + 1)/\eta$, then 
\[
P_D(\eta M)\leqslant (4\eta)^d P_D(M).
\]
\end{lemma}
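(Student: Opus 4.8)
The plan is to reduce the claim to the elementary fact that a degree-$d$ polynomial with positive leading coefficient behaves like its leading term for large inputs. Write $\ell_P$ for the leading coefficient of $P$, which is positive by \eqref{eqn3.5}. First I would fix $M_0(P)\in\bN$, with $M_0(P)\ge 1$, large enough in terms of $P$ alone that
\[
\tfrac12 \ell_P x^d \le P(x) \le 2\ell_P x^d \qquad (x \ge M_0(P)).
\]
The crucial point is that this threshold depends only on $P$, not on $D$ or $\eta$; that is what makes the conclusion uniform in $D$, and it is the reason we pass through $P$ itself rather than work directly with $P_D$, whose coefficients grow with $D$.

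Next, given $\eta \in (0,1)$, $D \in \bN$ and $M \ge (M_0(P)+1)/\eta$, I would use the defining identity $P_D(x) = P(r_D + Dx)/\lam(D)$ from \eqref{eqn3.9} to cancel $\lam(D)$: setting $u := r_D + DM$ and $v := r_D + D\eta M$, one has $P_D(\eta M)/P_D(M) = P(v)/P(u)$. The hypothesis on $M$ forces $\eta M \ge M_0(P)+1$, and since $-D < r_D \le 0$ and $D\ge 1$ this gives $v > D\,M_0(P) \ge M_0(P)$, and hence also $u > v > M_0(P) \ge 1$ (as $M \ge \eta M$). So both $u$ and $v$ lie in the range where the growth estimate applies, yielding $P(v)/P(u) \le 4(v/u)^d$; positivity of $P(u)$ also shows $P_D(M) > 0$, so at the end the ratio bound can be cleared to the stated inequality.

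Finally I would bound $v/u$ by $\eta$. Since $r_D \le 0$ and $1-\eta > 0$ we have $r_D(1-\eta) \le 0$, i.e. $r_D + D\eta M \le \eta(r_D + DM)$, so $v \le \eta u$; as $u,v > 0$ this gives $v/u \le \eta$. Combining, $P_D(\eta M)/P_D(M) \le 4\eta^d \le (4\eta)^d$, the last inequality because $d \ge 1$. I do not expect a genuine obstacle: the only delicate points are bookkeeping — keeping $M_0(P)$ independent of $D$ and $\eta$, and checking that the shift $r_D\in(-D,0]$ leaves $v$ safely positive so that the positivity hypothesis \eqref{eqn3.5} is genuinely available at the point $r_D + D\eta M$.
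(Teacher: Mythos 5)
Your proof is correct and follows essentially the same route as the paper: sandwich $P(Y)$ between constant multiples of $\ell_P Y^d$ for all $Y\geqslant M_0(P)$ with a threshold depending only on $P$, then use $-D<r_D\leqslant 0$ to control the shifted arguments $r_D+D\eta M$ and $r_D+DM$. Your observation that $r_D+D\eta M\leqslant \eta(r_D+DM)$ is in fact a slightly cleaner way to finish than the paper's separate bounds on numerator and denominator, which incur an extra factor $\left(1+\frac{1}{M-1}\right)^d$ and hence the requirement $M_0(P)\geqslant 4$.
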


\begin{proof}
Let $\ell_P$ denote the leading coefficient of $P$. Since (\ref{eqn3.5}) holds, we know that $\ell_P\geqslant 1$, and that there exists a positive integer $M_0(P)\geqslant 4$ such that
\begin{equation*}
    \ell_P Y^d \leqslant 2P(Y) \leqslant 3\ell_P Y^d
\end{equation*}
holds for all real $Y\geqslant M_0(P) $. Since $-D<r_D\leqslant 0$, it follows that if $M\geqslant (M_0(P) + 1)/\eta$ then
\begin{equation*}
    \frac{P_D(\eta M)}{P_D(M)} 
    \leqslant \frac{3(r_D + D\eta M)^d}{(r_D + D M)^d}
    \leqslant \frac{3(D\eta M)^d}{(D M - D)^d}
    = 3\eta^d\left(1 + \frac{1}{M-1}\right)^d.
\end{equation*}
The asserted bound now follows upon noting that $M\geqslant M_0(P)\geqslant 4$.
\end{proof}

\begin{lemma}[Lower bound for $\Psi_{\bz}(f_{\str}+f_{\sml})$]\label{lem8.13}
For all $\delta>0$, there exist positive constants $c_{1}(\delta)=c_{1}(L_1,L_2;\delta)>0$ and $\eta=\eta(d,L_1,L_2,\delta)>0$ such that the following is true. Suppose $f:\Z\to[0,1]$ is supported on $[N]$ and satisfies $\lVert f\rVert_1\geqslant \delta N$. Given $\sig>0$ and a monotone increasing function $\cF:\R_{\geqslant 0}\to\R_{\geqslant 0}$, let $f_{\str}$, $f_{\sml}$, $K$ and $\btheta$ be as given by applying Lemma~\ref{lem8.3} to $f$. Let $\rho>0$ satisfy $K\rho\leqslant 1$, and let $\bz\in\bohr_{P_D}(\btheta,\rho)^{t}$. If $\bz\in[\eta Z]^{t}$, then 
\begin{equation*}
    \Psi_{\bz}(f_{\str}+f_{\sml}) \geqslant \left(c_{1}(\delta)- O_{L_1,L_2}(\sig + K\rho )\right)N^{s-1}. 
\end{equation*}
\end{lemma}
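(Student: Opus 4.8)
The plan is to run the standard three-move scheme for this kind of lemma: remove the small part with the generalised von Neumann inequality (Lemma~\ref{lem8.9}), use the almost-periodicity of $f_{\str}$ to reduce to a shift-free count, and finish with supersaturation for the structured part (Lemma~\ref{lem8.10}). Throughout, write $w:=L_2(P_D(\bz))\in\bZ$ and recall that $L_1(\bv)=1$; we also use that $Z$ is sufficiently large, as is implicit in the ambient hypotheses.

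\emph{Removing $f_{\sml}$.} Both $f_{\str}+f_{\sml}$ and $f_{\str}$ take values in $[0,1]$ by part~(\ref{itemNon}) of Lemma~\ref{lem8.3}, and $\lVert f_{\sml}\rVert_{2}\le\sig\sqrt{N}$ by the $L^{2}$-bound there. Hence Lemma~\ref{lem8.9} gives $\Psi_{\bz}(f_{\str}+f_{\sml})\ge\Psi_{\bz}(f_{\str})-s\sig N^{s-1}$, and it remains to bound $\Psi_{\bz}(f_{\str})$ from below. To control the shift, extend $F$ to $\tilde f(n):=F(n\btheta)$ for all $n\in\bZ$; then $\tilde f$ takes values in $[0,1]$, agrees with $f_{\str}$ on $[N]$ by part~(\ref{itemStr}), and $|\tilde f(n+v_iw)-\tilde f(n)|\le K\lVert v_iw\btheta\rVert$ for every $n\in\bZ$. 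Since $\bz\in\bohr_{P_D}(\btheta,\rho)^{t}$, the triangle inequality gives $\lVert w\btheta\rVert\ll_{L_2}\rho$, so $|\tilde f(n+v_iw)-\tilde f(n)|\ll_{L_1,L_2}K\rho$. Since $\bz\in[\eta Z]^{t}$ and $P_D$ is increasing on $[1,\infty)$, Lemma~\ref{lem8.12} applied with $M=Z$ yields $P_D(z_j)\le P_D(\eta Z)\le(4\eta)^{d}N$, whence $|w|\ll_{L_1,L_2}\eta^{d}N$ and $|v_iw|\ll_{L_1,L_2}\eta^{d}N$, with implied constants independent of $D$.

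\emph{Reducing to a shift-free count.} Comparing summation ranges (equivalently, substituting $m_i=n_i+v_iw$, which turns $L_1(\bn)=0$ into $L_1(m_1,\dots,m_s)=w$), one checks that $\Psi_{\bz}(f_{\str})$ differs from $\sum_{L_1(\bn)=0,\ \bn\in[N]^{s}}\prod_i\tilde f(n_i+v_iw)$ by at most the number of $\bn$ with $L_1(\bn)=0$ for which exactly one of the conditions ``$\bn\in[N]^{s}$'' and ``$n_i+v_iw\in[N]$ for all $i$'' holds; by non-degeneracy of $L_1$ and $|v_iw|\ll_{L_1,L_2}\eta^{d}N$ this discrepancy is $\ll_{L_1,L_2}\eta^{d}N^{s-1}$. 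For $\bn\in[N]^{s}$ we have $\tilde f(n_i)=f_{\str}(n_i)$, so using $\bigl|\prod_i a_i-\prod_i b_i\bigr|\le\sum_i|a_i-b_i|$ for $a_i,b_i\in[0,1]$, the oscillation bound above, and $\#\{\bn\in[N]^{s}:L_1(\bn)=0\}\ll_{L_1}N^{s-1}$, we obtain
\[
\sum_{\substack{L_1(\bn)=0\\\bn\in[N]^{s}}}\prod_i\tilde f(n_i+v_iw)\ \ge\ \sum_{\substack{L_1(\bn)=0\\\bn\in[N]^{s}}}\prod_i f_{\str}(n_i)\ -\ O_{L_1,L_2}(K\rho)\,N^{s-1}.
\]
By part~(\ref{itemSum}) of Lemma~\ref{lem8.3} and $f\ge0$ we have $\sum_m f_{\str}(m)=\sum_m f(m)\ge\del N$, and $f_{\str}:[N]\to[0,1]$, so Lemma~\ref{lem8.10} (whose hypothesis $L_1(1,\dots,1)=0$ holds) gives $\sum_{L_1(\bn)=0}\prod_i f_{\str}(n_i)\ge c_1^{0}(\del)N^{s-1}$ for some $c_1^{0}(\del)=c_1^{0}(L_1;\del)>0$. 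Chaining everything,
\[
\Psi_{\bz}(f_{\str}+f_{\sml})\ \ge\ c_1^{0}(\del)N^{s-1}\ -\ O_{L_1,L_2}(\eta^{d})\,N^{s-1}\ -\ O_{L_1,L_2}(\sig+K\rho)\,N^{s-1},
\]
and fixing $\eta=\eta(d,L_1,L_2,\del)>0$ small enough that the $O_{L_1,L_2}(\eta^{d})$ term is at most $\tfrac12c_1^{0}(\del)$, and setting $c_1(\del):=\tfrac12c_1^{0}(\del)$, yields the claim.

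The step I expect to be delicate is the reduction to a shift-free count: one must peel off exactly the boundary contributions created by the translation $v_iw$ without losing the main term, and—since the output constants must be uniform in $D$—every estimate touching $P_D$ has to be routed through $D$-robust inputs (Lemma~\ref{lem8.12} for the growth of $P_D$, and the $D$-independence already built into Lemma~\ref{lem8.10}) rather than through the coefficients of $P_D$, which grow with $D$. Everything else is routine; the only genuinely new feature relative to the homogeneous treatments in \cite{Cha2022,CLP2021} is that the linearised equation $L_1(\bn)=L_2(P_D(\bz))$ is inhomogeneous, which is precisely why the shift $w$ must be forced to be small—this is the role of the smallness of $\eta$.
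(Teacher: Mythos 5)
Your argument is correct and follows essentially the same route as the paper: remove $f_{\sml}$ via Lemma~\ref{lem8.9}, exploit the Lipschitz almost-periodicity of $f_{\str}$ along the Bohr condition, control the translation $v_iL_2(P_D(\bz))$ via Lemma~\ref{lem8.12} so that $\eta$ absorbs the boundary error, and finish with Lemma~\ref{lem8.10}. The only (cosmetic) difference is in the boundary bookkeeping: the paper restricts to the sub-interval $\Omega=(c\eta^dN,(1-c\eta^d)N]$ via the indicator $I_{\bz}$ and applies Lemma~\ref{lem8.10} to $1_{\Omega}f_{\str}$ at slightly reduced density, whereas you extend $f_{\str}$ to $\tilde f=F(\cdot\,\btheta)$ on all of $\Z$ and bound the symmetric-difference discrepancy directly; both yield the same $O_{d,L_1,L_2}(\eta^d)N^{s-1}$ error with constants uniform in $D$.
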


\begin{proof}
Lemma \ref{lem8.9} informs us that
   \begin{equation*}
       \Psi_{\bz}(f_{\str}+f_{\sml}) = \Psi_{\bz}(f_{\str}) + O(\sig N^{s-1}).
   \end{equation*}
   It therefore only remains to estimate $\Psi_{\bz}(f_{\str})$. 
   For each $\bn\in\Z^{s}$, define
   \begin{equation*}
       I_{\bz}(\bn):=
       \begin{cases}
           1, \quad &\text{if }(n_{i}+v_{i}L_2(P_D(\bz)))\in[N]\text{ for all }i\in [s];\\
           0, &\text{otherwise}.
       \end{cases}
   \end{equation*}
Since $\bz\in\bohr_{P_D}(\btheta,\rho)^{t}$, we deduce from property (\ref{itemStr}) of Lemma \ref{lem8.3} that if $\bn\in[N]^s$, then
   \begin{equation*}
       I_{\bz}(\bn)|f_{\str}(n_{i})-f_{\str}(n_{i}+v_{i}L_2(P_D(\bz)))|\ll_{\bv,L_2}K\rho\quad (1\leqslant i\leqslant s).
   \end{equation*}
Thus, by using property (\ref{itemNon}) to bound $f_{\str}$ by $1$, we find that
   \begin{align*}
       \Psi_{\bz}(f_{\str})
      & =\sum_{L_1(\bn)=0}I_{\bz}(\bn)\prod_{i=1}^{s}[f_{\str}(n_{i})+O_{\bv,L_2}(K\rho)]\\
       &=\left(\sum_{L_1(\bn)=0}I_{\bz}(\bn)f_{\str}(n_{1})\cdots f_{\str}(n_{s})\right) + O_{\bv,L_2}(K\rho N^{s-1})
       .
   \end{align*}
   In view of (\ref{eqn3.5}) and Lemma \ref{lem8.12}, we see that
   \begin{equation*}
       |L_2(P_{D}(\bz))|\ll_{L_2} P_{D}(\eta Z) \leqslant (4\eta)^{d}N \quad (\bz\in[\eta Z]^t).
   \end{equation*}
   It follows that there exists a constant $c=c(L_1,L_2,d)>0$ such that, for all $\bz\in[\eta Z]^{t}$, the function $I_{\bz}$ is non-zero on the set $\Omega^{t}$, where $\Omega:=\left(c\eta^d N,(1-c\eta^d)N\right]\cap \bZ$.
We therefore find that \begin{equation*}
\Psi_{\bz}(f_{\str})
\geqslant\left(\sum_{L_1(\bn)=0}g(n_{1})\cdots g(n_{s})\right) - O_{\bv,L_2}(K\rho N^{s-1}),
\end{equation*}
where $g(n):=1_{\Omega}(n)f_{\str}(n)$.
   
Finally, we infer from property (\ref{itemSum}) of Lemma \ref{lem8.3} that $g(1)+\cdots +g(N)\geqslant (\delta-2c\eta^d) N$. Thus, by taking $\eta$ sufficiently small, we can apply Lemma \ref{lem8.10} to $g$ to obtain the required bound.
\end{proof}

Combining all of these results finally allows us to prove Theorem \ref{thm8.1}, thereby completing the proof Theorem \ref{thm3.8}.

\begin{proof}[Proof of Theorem \ref{thm8.1}]
Fix $r\in\N$ and $\delta\in(0,1)$. Let $c_1 (\delta)$ and $\eta=\eta(\delta,P,L_1,L_2)$ be as given in Lemma \ref{lem8.13}. Notice that the conclusion of Lemma \ref{lem8.13} allows us to assume that $c_1 (\delta)<1$, which we do. Let $\sig =c_{1}(\delta)/M$, where $M=M(L_1,L_2)$ is some suitably large positive integer, and let $\Delta_1$ be a function given by Lemma \ref{lem8.8}. Let $\cF:\R_{\geqslant 0}\to\R_{\geqslant 0}$ be a monotone increasing function which satisfies
    \begin{equation}\label{eqn8.3}
\cF(x)^{-1/(2s+2t)} \leqslant \tau c_{1}(\delta)\left(\eta r^{-1}\Delta_{1}(d,x;x^{-1}\sig)\right)^{t}
\end{equation}
    for all $x\in\N$, where $\tau=\tau(P) > 0$ will be chosen shortly.
    
Let $N,Z\in\N$ be as given in the statement of Theorem \ref{thm8.1}, and assume they are sufficiently large in terms of $(\delta,r,L,L_2,P)$. Suppose we have an $r$-colouring $[Z]=\cC_1 \cup\cdots\cup \cC_r$, and sets $\cA_1 ,\ldots,\cA_r\subseteq[N]$ satisfying $|\cA_i|\geqslant\delta N$ for each $i\in[r]$. Applying Lemma \ref{lem8.3} to each of the functions $f_i:=1_{\cA_i}$ provides a decomposition $f_i=f_{\str}^{(i)}+f_{\sml}^{(i)}+f_{\unf}^{(i)}$, as well as associated parameters $K \leqslant K_{0}(r;\sig,\cF)$ and $\btheta\in\T^{K}$. Let $\rho>0$ be defined by the equality $K\rho=\sig$. By our choices of parameters in the previous paragraph, we can assume that $N$ and $Z$ are also sufficiently large relative to $(\sig,\cF,\rho,\eta)$. 
    
Now let $\cC_{i}'=\cC_i \cap [\eta Z]$ for all $i\in[r]$. Recall from Lemma \ref{lem3.7} that $P_D$ is an intersective polynomial of degree $d$. Thus, applying the pigeonhole principle and Lemma \ref{lem8.8} to the colouring $[\eta Z]=\cC_1' \cup\cdots\cup \cC_r '$ yields an index $k\in[r]$ such that 
    \begin{equation}\label{eqn8.4}
|\bohr_{P_D}(\btheta,\rho)\cap \cC_{k}'|\geqslant r^{-1}\Delta_1(d,K;\rho)\eta Z.
    \end{equation}
    It now remains to establish (\ref{eqn8.1}) for this choice of $k$.
    
Let $B:=\cC'_{k} \cap
\bohr_{P_{D}}(\btheta,\rho)$.
For any $\bz\in B^{t}$, if $M$ is large enough, then Lemma~\ref{lem8.13} implies that
    \begin{equation*}
        2\Psi_{\bz}(f_{\str}^{(i)}+f_{\sml}^{(i)})\geqslant c_{1}(\delta)N^{s-1} \quad (1\leqslant i\leqslant r).
    \end{equation*}
Summing over $\bz$ yields
\[
2\Lam_D
(f_{\str}^{(i)} +
f_{\sml}^{(i)}; B)
\ge c_1(\del) |B|^t N^{s-1}.
\]
    Incorporating Lemma \ref{lem8.5} and (\ref{eqn8.4}) reveals that
\begin{align*}
2\Lambda_{D}(\cA_i;B)&\ge
\left(c_{1}(\delta)|B|^{t} - C\cF(K)^{-1/(2s+2t)}Z^{t}\right)N^{s-1}
\\
&\geqslant \left(c_{1}(\delta)r^{-t}
\Delta_1(d,K;K^{-1}\sig)^{t}\eta^{t} - C\cF(K)^{-1/(2s+2t)}\right)N^{s-1}Z^{t},
    \end{align*}
    for all $i\in[r]$ and some constant $C=C(P)>1$. Setting $\tau^{-1}=2C$ in (\ref{eqn8.3}) now gives
    \begin{equation*}
        \Lambda_{D}(\cA_i;\cC_{k})\geqslant \Lambda_{D}(\cA_i;B)\gg_{\delta,r,P,L_1,L_2} N^{s-1}Z^{t} \qquad   (1\leqslant i\leqslant r),
    \end{equation*}
    as required.
\end{proof}

\appendix

\section{Polynomial congruences}

\begin{lemma} \label{lemA.1}
Let $p$ be prime, and let $s \in \bN$. Let $f(X) \in \bZ_p[X]$ have degree $d \in \bN$ and discriminant $\Del$. Assume that $p \nmid c \Del$, where $c$ is the leading coefficient of $f$. Then
\[
\# \{ x \in [p^s]: f(x) \equiv 0 \mmod{p^{s}} \} \le d.
\]
\end{lemma}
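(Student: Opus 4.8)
\textbf{Proof proposal for Lemma \ref{lemA.1}.}

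The plan is to reduce everything to the mod-$p$ situation by Hensel's lemma, exploiting that $p \nmid c\Del$ forces every root of $f$ mod $p$ to be simple. First I would note that since $p \nmid c$, the polynomial $f$ remains of degree $d$ after reduction mod $p$, and since $p \nmid \Del$, the reduction $\bar f \in \bF_p[X]$ has no repeated roots; in particular $f'(x) \not\equiv 0 \mmod p$ for every $x$ with $f(x) \equiv 0 \mmod p$. Now I would count solutions in $[p^s]$ by grouping them according to their residue mod $p$: every $x \in [p^s]$ with $f(x) \equiv 0 \mmod{p^s}$ reduces to some root $a \in [p]$ of $\bar f$, and the number of such roots $a$ is at most $d$ because $\bar f$ has degree $d$ over the field $\bF_p$.

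The key step is then to show that for each root $a \in [p]$ of $\bar f$, there is \emph{at most one} $x \in [p^s]$ with $x \equiv a \mmod p$ and $f(x) \equiv 0 \mmod{p^s}$. This is the standard uniqueness half of Hensel's lemma: if $x, y \in [p^s]$ both satisfy $f(x) \equiv f(y) \equiv 0 \mmod{p^s}$ and $x \equiv y \equiv a \mmod p$, then writing $y = x + p^k u$ with $p \nmid u$ for some $k \ge 1$ (assuming $x \ne y$ and $k < s$), Taylor expansion gives
\[
0 \equiv f(y) \equiv f(x) + f'(x) p^k u + O(p^{2k}) \mmod{p^s},
\]
so $f'(x) p^k u \equiv 0 \mmod{p^{\min(2k,s)}}$; since $f'(x)$ is a $p$-adic unit and $p \nmid u$, this forces $\min(2k,s) \le k$, hence $k \ge s$, contradicting $k < s$. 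Combining the two steps, each of the at most $d$ residues $a$ contributes at most one solution, giving the bound $\le d$.

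The main obstacle, such as it is, is purely bookkeeping: making the Taylor-expansion argument precise with the correct $p$-adic valuations (one must be careful that the quadratic and higher terms genuinely have valuation $\ge 2k$, which uses that $f$ has $p$-integral coefficients, i.e. $f \in \bZ_p[X]$), and handling the boundary case $k = s$ cleanly. There is no deep difficulty here; it is essentially a packaging of Hensel's lemma, and one could alternatively cite a standard reference for Hensel's lemma over $\bZ_p$ and conclude immediately. I would probably present the short self-contained argument above rather than invoke a black box, since the valuation computation is only a couple of lines.
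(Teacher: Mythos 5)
Your proof is correct. It is worth noting that you and the paper package Hensel's lemma in opposite directions. The paper maps each solution $a \in [p^s]$ of $f(x) \equiv 0 \mmod{p^s}$ \emph{upward}: since $p \nmid c\Del$ forces $p \nmid f'(a)$, the existence half of Hensel's lemma lifts $a$ to a genuine root $\tilde a \in \bQ_p$ with $\tilde a \equiv a \mmod{p^s}$; distinct $a$ give distinct $\tilde a$, and a degree-$d$ polynomial has at most $d$ roots in $\bQ_p$. You instead map each solution \emph{downward} to its residue mod $p$, bound the number of residues by the $\le d$ roots of $\bar f$ in $\bF_p$, and then prove by hand (via the Taylor expansion and the valuation computation) that each residue class carries at most one solution in $[p^s]$ --- i.e.\ the uniqueness half of Hensel's lemma. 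Both arguments hinge on the same point, that $p \nmid c\Del$ makes every relevant root simple so that $f'$ is a unit there. Your version is fully self-contained and avoids invoking Hensel lifting as a black box, at the cost of a few lines of bookkeeping; the paper's is shorter but leans on the lifting statement. One small simplification available to you: the case $k \ge s$ never needs separate handling, since two distinct elements of $[p^s]$ differ by a nonzero integer of absolute value less than $p^s$, so $k = \ord_p(y-x) < s$ automatically.
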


\begin{proof} 
Define
\begin{equation} \label{eqnA.1}
X = \{ x \in \bQ_p: f(x) = 0 \}, \qquad 
X_s = \{ x \in [p^s]: f(x) \equiv 0 \mmod{p^{s}} \}.
\end{equation}
Writing $\bar \Del$ for the discriminant of the image $\bar f$ of $f$ in $\bF_p$, we have
\[
p \nmid \bar \Del = \prod_{x \in \bF_p: \bar f(x) = 0} \bar f'(x).
\]
Consequently, if $a \in X_s$ then $p \nmid f'(a)$, so by Hensel's lemma there exists $\tilde a \in X$ such that
\[
\tilde a \equiv a \mmod{p^{s}}.
\]
Hence
\[
|X_s| \le |X| \le d.
\]
\end{proof}

\begin{lemma} \label{lemA.2} Let $p$ be prime, and let $f(X) \in \bZ_p[X]$ be squarefree of degree $d \ge 2$. Let $R \ne 0$ be the resultant of $f$ and $f'$, and let $j > \ord_p(R)$ be an integer. Then
\begin{equation} \label{eqnA.2}
Y_j := \{ y \in \bZ_p: f(y) \equiv f'(y) \equiv 0 \mmod{p^{j}} \}
\end{equation}
is empty.
\end{lemma}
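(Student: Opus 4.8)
The plan is to exploit the classical B\'ezout-type identity satisfied by the resultant. Since $f$ is squarefree of degree $d\ge 2$ and $\bZ_p$ has characteristic zero, the derivative $f'$ has degree exactly $d-1$, so the resultant $R=\Res(f,f')$ is the determinant of the associated $(2d-1)\times(2d-1)$ Sylvester matrix, and it is nonzero (as hypothesised, and as forced by squarefreeness); in particular $\ord_p(R)$ is finite. By the standard theory of resultants over a commutative ring --- apply the cofactor expansion (Cramer's rule) to the Sylvester matrix of $f$ and $f'$ --- there exist polynomials $u(X),v(X)\in\bZ_p[X]$, with $\deg u\le d-2$ and $\deg v\le d-1$, such that
\[
u(X)f(X)+v(X)f'(X)=R.
\]

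Now suppose, for a contradiction, that there exists $y\in Y_j$, so that $f(y)\in p^{j}\bZ_p$ and $f'(y)\in p^{j}\bZ_p$. Since $y\in\bZ_p$ and $u,v\in\bZ_p[X]$, we have $u(y),v(y)\in\bZ_p$, and substituting $X=y$ into the identity above yields
\[
R=u(y)f(y)+v(y)f'(y)\in p^{j}\bZ_p.
\]
Hence $\ord_p(R)\ge j$, which contradicts the hypothesis $j>\ord_p(R)$. Therefore $Y_j=\emptyset$, as claimed.

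There is essentially no obstacle here: the only point requiring a little care is the existence of the identity $uf+vf'=R$ with coefficients in $\bZ_p$ (rather than merely in $\bQ_p$), which is exactly what the determinantal description of the resultant provides, since expanding the Sylvester determinant along a suitable row produces polynomial combinations with coefficients in the base ring. One could alternatively phrase the same argument by noting directly that $\Res(f,f')$ lies in the ideal $(f,f')\subseteq\bZ_p[X]$ and evaluating at $y$.
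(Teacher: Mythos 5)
Your proof is correct and follows essentially the same route as the paper: both rest on the B\'ezout-type identity $uf + vf' = R$ with $u,v\in\bZ_p[X]$ (the paper cites Cox--Little--O'Shea for this, you derive it from the Sylvester determinant), and then evaluate at a hypothetical $y\in Y_j$ to force $\ord_p(R)\ge j$, a contradiction.
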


\begin{proof} 
By \cite[Chapter 3, \S 6, Proposition 5]{CLO2015}, there exist non-zero polynomials $g_{1},g_{2}\in \bZ_p[X]$ such that $R = fg_{1} + f'g_{2}$. Since $p^j\nmid R$, the result follows.
\end{proof}

\begin{lemma} \label{lemA.3} Let $p$ be prime, and let $f(X) \in \bZ_p(X)$ be squarefree of degree $d \in \bN$. Define $Y_1,Y_2,\ldots$ as in \eqref{eqnA.2}. Let $h \in \bZ_{\ge 0}$, suppose $Y_{h+1}=\emptyset$, and let $s > 2h$ be an integer. Then
\[
\# \{ x \in [p^s]: f(x) \equiv 0 \mmod{p^{s}} \} \le dp^h.
\]
\end{lemma}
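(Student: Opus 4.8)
The strategy is to stratify the solution set $X_s := \{x \in [p^s] : f(x) \equiv 0 \pmod{p^s}\}$ according to the exact $p$-adic valuation of $f'$ at the solution, and to bound each stratum by a Hensel-type lifting argument. Concretely, for $0 \le k \le h$, partition $X_s$ into the sets
\[
X_s^{(k)} = \{ x \in X_s : \ord_p(f'(x)) = k \},
\]
where we use that $\ord_p(f'(x)) \le h$ for every $x \in X_s$: indeed, if $\ord_p(f'(x)) \ge h+1$ then, since $s > 2h \ge h+1$, we would also have $f(x) \equiv 0 \pmod{p^{h+1}}$, placing a lift of $x$ into $Y_{h+1}$, contradicting the hypothesis $Y_{h+1} = \emptyset$. (Here one passes from $x \in [p^s]$ to its image in $\bZ_p$ and notes $Y_{h+1}$ is defined as a subset of $\bZ_p$.)

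Next I would bound $|X_s^{(k)}|$ for fixed $k \le h$. The key point is a quantitative Hensel lemma: if $\ord_p(f'(a)) = k$ and $f(a) \equiv 0 \pmod{p^s}$ with $s > 2k$, then $a$ lifts to a genuine root $\tilde a \in \bZ_p$ of $f$ with $\tilde a \equiv a \pmod{p^{s-k}}$, and moreover this root is unique among residues mod $p^{s-k}$ with valuation-$k$ derivative. Consequently the reduction map $X_s^{(k)} \to \bZ_p/p^{s-k}\bZ_p$ sends $x$ to (the class of) some actual root of $f$; since $f$ is squarefree of degree $d$ it has at most $d$ roots in $\bZ_p$, so the image has size at most $d$, and each fibre of $X_s \to [p^s]/p^{s-k}[p^s]$ has size at most $p^{s}/p^{s-k} = p^k \le p^h$. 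Therefore $|X_s^{(k)}| \le d p^k \le d p^h$.

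Summing naively over $k$ would only give $d(h+1)p^h$, which is too weak, so the final step must be sharper: the $d$ roots of $f$ in $\bZ_p$ are shared across all the strata, so one should count roots first. Partition the roots $\alpha_1,\dots,\alpha_m$ of $f$ in $\bZ_p$ (with $m \le d$) by $k_i := \ord_p(f'(\alpha_i)) \le h$; every $x \in X_s$ lies in the ball of radius $p^{-(s-k_i)}$ about some $\alpha_i$ (by the lifting argument applied in reverse), and each such ball contains at most $p^{k_i} \le p^h$ elements of $[p^s]$. Hence $|X_s| \le \sum_{i=1}^m p^{k_i} \le m p^h \le d p^h$, as claimed. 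The main obstacle is making the quantitative Hensel step precise in the regime $s > 2h$ (rather than the usual $s > 2k$), which is exactly why the hypothesis $s > 2h$ appears; this forces the Taylor-expansion estimate $f(a + p^{s-k}t) \equiv f(a) + p^{s-k} t f'(a) \pmod{p^{2(s-k)}}$ together with $2(s-k) \ge 2(s-h) > s$ to guarantee both existence and uniqueness of the lift.
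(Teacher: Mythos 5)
Your proposal is correct and follows essentially the same route as the paper: bound $\ord_p(f'(x))\le h$ for every solution using $Y_{h+1}=\emptyset$, apply the quantitative form of Hensel's lemma to lift each solution to a genuine root of $f$ in $\bZ_p$ modulo $p^{s-\ord_p(f'(x))}$, and then count at most $p^h$ elements of $[p^s]$ in the ball around each of the at most $d$ roots. The stratification by the exact valuation $k$ and the refined count $\sum_i p^{k_i}$ are only cosmetic refinements of the paper's one-line estimate $|X_s|\le p^h|X|\le dp^h$.
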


\begin{proof} Let $X,X_1,X_2,\ldots$ be as in \eqref{eqnA.1}. For $a \in X_s$, write $\del_a = \ord_p(f'(a))$, and note that $\del_a \le h$. By Hensel's lemma \cite[Lemma 3]{Woo1996}, if $a \in X_s$ then there exists $\tilde a \in X$ such that
\[
\tilde a \equiv a \mmod {p^{s-\del_a}}.
\]
Therefore
\[
\tilde a \equiv a \mmod {p^{s-h}},
\]
and so
\[
|X_s| \le p^h |X| \le dp^h.
\]
\end{proof}

\begin{lemma} \label{lemA.4} Let $f(X) \in \bZ[X]$ be squarefree of degree $d \in \bN$, and let $m \in \bN$. Then
\[
\# \{ x \in [m]: f(x) \equiv 0 \mmod m \}
\ll_f d^{\ome(m)}.
\]
\end{lemma}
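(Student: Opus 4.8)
The plan is to reduce to prime-power moduli via the Chinese Remainder Theorem and then apply Lemmas~\ref{lemA.2} and \ref{lemA.3} one prime at a time, the key subtlety being that we may only afford a multiplicative constant larger than $d$ for the finitely many primes dividing a fixed resultant.

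First I would dispose of the case $d=1$ by hand: writing $f(X)=cX+b$ with $c\neq 0$, the congruence $cx\equiv -b\mmod m$ has at most $\gcd(c,m)\leq|c|=O_f(1)$ solutions in $[m]$, which is $O_f(1)=O_f(d^{\ome(m)})$. Henceforth assume $d\geq 2$, and for $m\in\bN$ write $N(m)=\#\{x\in[m]:f(x)\equiv 0\mmod m\}$. Since $f$ is squarefree, $R:=\Res(f,f')$ is a nonzero integer depending only on $f$, and $f$ is squarefree over every $\bQ_p$ (coprimality of $f$ and $f'$ passes to field extensions), so Lemmas~\ref{lemA.2} and \ref{lemA.3} may be applied to $f$ regarded as an element of $\bZ_p[X]$, with the same $R$. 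By the Chinese Remainder Theorem $N$ is multiplicative, so $N(m)=\prod_{p^e\,\|\,m}N(p^e)$, and it remains to bound $N(p^e)$.

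Fix a prime $p$ and set $h_p=\ord_p(R)$. Lemma~\ref{lemA.2} (applied with $j=h_p+1>\ord_p(R)$) shows that the set $Y_{h_p+1}$ attached to $f$ over $\bZ_p$ is empty, so Lemma~\ref{lemA.3} gives $N(p^e)\leq d\,p^{h_p}$ for every $e>2h_p$; for $e\leq 2h_p$ we use the trivial bound $N(p^e)\leq p^e\leq p^{2h_p}$. Consequently $N(p^e)\leq d$ for all $p\nmid R$ and all $e\geq 1$, while $N(p^e)\leq d\,p^{2h_p}\leq dR^2$ for the finitely many $p\mid R$. Splitting the product defining $N(m)$ according to whether $p\mid R$, and letting $j=\#\{p:p\mid\gcd(m,R)\}\leq\ome(R)$, we obtain
\[
N(m)=\prod_{\substack{p^e\,\|\,m\\ p\nmid R}}N(p^e)\cdot\prod_{\substack{p^e\,\|\,m\\ p\mid R}}N(p^e)\leq d^{\,\ome(m)-j}(R^2d)^{j}=R^{2j}d^{\ome(m)}\leq R^{2\ome(R)}d^{\ome(m)}\ll_f d^{\ome(m)},
\]
as required.

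The one place that needs care — and the closest thing here to a genuine obstacle — is keeping the exceptional constant factor confined to finitely many primes: the crude estimate $N(p^e)\leq p^e$ alone would build up an unbounded factor over the primes dividing $m$, so it is essential to extract the clean bound $N(p^e)\leq d$ for all $p\nmid R$ from Lemmas~\ref{lemA.2} and \ref{lemA.3}, and to absorb the $O_f(1)$ losses only at the divisors of $R$. The separate treatment of $d=1$ is forced by the hypothesis $d\geq 2$ in Lemma~\ref{lemA.2}.
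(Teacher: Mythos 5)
Your proof is correct and follows essentially the same route as the paper's: reduce to prime powers by the Chinese Remainder Theorem, then bound each local count via Lemmas \ref{lemA.2} and \ref{lemA.3}, confining the $O_f(1)$ losses to the finitely many primes dividing the resultant (the paper invokes Lemma \ref{lemA.1} for the primes not dividing $c\Del$, whereas you use the $h_p=0$ case of Lemma \ref{lemA.3}; these are interchangeable). Your separate treatment of $d=1$ is a sensible precaution, since Lemma \ref{lemA.2} indeed requires $d\ge 2$.
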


\begin{proof} Let $C = |c\Del|$, where $c$ is the leading coefficient of $f$ and $\Del$ is the discriminant of $f$. Let $h \in \bN$ be minimal such that
$Y_h$ is empty for any prime $p \le C$. By Lemma \ref{lemA.2}, we know that $h\ll_{f}1$.
For $p > C$ and $s \in \bN$, Lemma \ref{lemA.1} yields
\[
\# \{ x \in [p^s]: f(x) \equiv 0 \mmod {p^s} \} \le d.
\]
By the Chinese remainder theorem, it remains to show that if $p \le C$ and $s \in \bN$ then
\[
\# \{ x \in [p^s]: f(x) \equiv 0 \mmod{ p^s} \} \le dp^{2h}.
\]
This is trivial if $s \le 2h$, and otherwise it follows from Lemma \ref{lemA.3}.
\end{proof}

Let $w$ and $W$ be as defined in \S\ref{sec4}. The following lemma, based on Rankin's trick, is analogous to \cite[Lemma A.3]{CLP2021}. 
\begin{lemma} \label{lemA.5} Let $P(X) \in \bZ[X]$ have degree $d \ge 2$. Then there are at most $O_P(10^w W M^{-1/2})$ integers $b \in [W]$ such that 
\[
(P'(b),W)_d > M,
\]
where $(P'(b),W)_d$ denotes the largest $m \in \bN$ for which $m^d \mid (P'(b),W)$.
\end{lemma}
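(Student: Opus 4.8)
The plan is to combine Rankin's trick with a uniform bound on the number of roots of $P'$ modulo prime powers. Write $D=\deg P'=d-1\ge 1$, and for $q\in\bN$ put $\rho(q):=\#\{b\in[q]:P'(b)\equiv0\mmod q\}$, which is multiplicative in $q$ by the Chinese remainder theorem. I would first observe that if $(P'(b),W)_d>M$ then the integer $m:=(P'(b),W)_d$ is $w$-smooth (being a divisor of $W$), satisfies $m^d\mid W$ and $m^d\mid P'(b)$, and exceeds $M$; hence the set $\{b\in[W]:(P'(b),W)_d>M\}$ is contained in the union, over $w$-smooth $m>M$ with $m^d\mid W$, of the sets $\{b\in[W]:m^d\mid P'(b)\}$. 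For such $m$, partitioning $[W]$ into residue classes modulo $m^d$ (legitimate since $m^d\mid W$) gives $\#\{b\in[W]:m^d\mid P'(b)\}=(W/m^d)\rho(m^d)$, so the union bound, after enlarging the range of summation, yields
\[
\#\{b\in[W]:(P'(b),W)_d>M\}\ \le\ W\sum_{\substack{m>M\\ m\ w\text{-smooth}}}\frac{\rho(m^d)}{m^d}.
\]

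The crux is the bound $\rho(m^d)\ll_{P,\eps}m^{d-1+\eps}$, valid for every fixed $\eps>0$. To prove it I would factor $P'=c\,g_1^{e_1}\cdots g_k^{e_k}$, with $c\in\bZ$ the content and the $g_i$ distinct primitive irreducibles, and set $G:=g_1\cdots g_k$ (primitive, squarefree, of degree $\ge1$) and $E:=\max_i e_i\le D$. Since $\ord_p(P'(b))\le\ord_p(c)+E\,\ord_p(G(b))$ (factor $P'(b)$), the divisibility $p^n\mid P'(b)$ forces $p^{\lceil(n-\ord_p c)/E\rceil}\mid G(b)$; counting the residue classes modulo $p^n$ that this permits and applying Lemma~\ref{lemA.4} to the squarefree polynomial $G$ (which gives $\#\{b\in[p^\l]:p^\l\mid G(b)\}\ll_P 1$ uniformly in $p$ and $\l$) yields $\rho(p^n)\ll_P p^{\ord_p c}\,p^{n(1-1/E)}$. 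Multiplying this over the primes $p\mid m$, and using $d(1-1/E)\le d\bigl(1-1/(d-1)\bigr)\le d-1$ (equivalently $d(d-2)\le(d-1)^2$), gives $\rho(m^d)\ll_P |c|\,K_P^{\ome(m)}\,m^{d-1}$ for a constant $K_P$ depending only on $P$, and then the elementary estimate $K_P^{\ome(m)}\ll_{P,\eps}m^{\eps}$ completes the step. I expect this paragraph to be the main technical obstacle: one must make the bound uniform in $p$ while coping with repeated factors of $P'$ and with the finitely many primes dividing its content; the underlying congruence counts are exactly Lemmas~\ref{lemA.1}, \ref{lemA.3} and \ref{lemA.4} (for $d=2$, where $P'$ is linear, one has more simply $\rho(p^n)\le p^{\ord_p c}$, so $\rho(m^d)\le|c|$).

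Finally I would take $\eps=\tfrac14$, so that $\rho(m^d)\ll_P m^{d-3/4}$, and apply Rankin's trick with exponent $\tfrac12$:
\[
\sum_{\substack{m>M\\ m\ w\text{-smooth}}}m^{-3/4}\ \le\ M^{-1/2}\sum_{m\ w\text{-smooth}}m^{-1/4}\ =\ M^{-1/2}\prod_{p\le w}\bigl(1-p^{-1/4}\bigr)^{-1}.
\]
It then remains to note that $\prod_{p\le w}(1-p^{-1/4})^{-1}\le\exp\bigl(O\bigl(\sum_{p\le w}p^{-1/4}\bigr)\bigr)=\exp\bigl(O(w^{3/4})\bigr)\ll 10^w$; inserting this into the displays above gives $\#\{b\in[W]:(P'(b),W)_d>M\}\ll_P 10^w W M^{-1/2}$, as required. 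Observe that the restriction to $w$-smooth $m$ is precisely what makes the Rankin sum finite, and is the source of the factor $10^w$.
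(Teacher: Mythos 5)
Your proof is correct and follows essentially the same route as the paper: a union bound over $w$-smooth divisors $m>M$ of $W$, a root count for the squarefree kernel of $P'$ via Lemma \ref{lemA.4}, and Rankin's trick with exponent $\tfrac12$. Your middle paragraph, establishing $\rho(m^d)\ll_{P}m^{d-1+\eps}$ by tracking the multiplicities of the irreducible factors of $P'$ and its content, is a careful write-up of a step the paper leaves implicit (it simply says ``applying this to the squarefree kernel of $P'$''), and it checks out.
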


\begin{proof} By Lemma \ref{lemA.4}, we have
\[
\# \{ z \in [m]: f(z) \equiv 0 \mmod m \} \ll_f (\deg f)^{\ome(m)} \ll_f m^\eps 
\]
for any squarefree polynomial $f(X) \in \bZ[X]$. Applying this to the squarefree kernel of $P'$, our count is at most a constant times
\begin{align*} \sum_{\substack{m > M \\ m \mid W}} \frac{W}{m} m^\eps &\ll 
\sum_{\substack{m > M \\ w\text{-smooth}}} Wm^{\eps-1} \sqrt{\frac m M}
\\ &\leqslant WM^{-1/2} \prod_{p\le w}\left( 1 + \frac1{1-p^{\eps-1/2}} \right) \ll 10^w W M^{-1/2}.
\end{align*}
\end{proof}

\providecommand{\bysame}{\leavevmode\hbox to3em{\hrulefill}\thinspace}

\end{document}